\renewcommand{\labelenumi}{(\arabic{enumi})}
\renewcommand{\labelenumii}{(\alph{enumii})}
\newcommand{\enumia}{\renewcommand{\labelenumi}{(\arabic{enumi})}}
\newcommand{\enumir}{\renewcommand{\labelenumi}{(\roman{enumi})}}
\newcommand{\enumiir}{\renewcommand{\labelenumii}{(\roman{enumii})}}
\theoremstyle{plain}
\newtheorem*{claim*}{Claim}
\newtheorem{thm}{Theorem}[section]
\newtheorem{claim}{Claim}
\newtheorem{claim2}{Claim}
\newtheorem{conj}[thm]{Conjecture}
\newtheorem{cor}[thm]{Corollary}
\newtheorem{lem}[thm]{Lemma}
\newtheorem{prop}[thm]{Proposition}
\newtheorem*{thm312}{Theorem 3.12}
\newtheorem*{cor313}{Corollay 3.13}
\newtheorem*{thm51}{Theorem 5.1}
\newtheorem*{cor52}{Corollay 5.2}
\newtheorem*{prop64}{Proposition 6.4}
\theoremstyle{definition}
\newtheorem{notation}[thm]{Notation}
\newtheorem{defn}[thm]{Definition}
\theoremstyle{remark}
\newtheorem{rem}[thm]{Remark}
\newtheorem{exmp}[thm]{Example}
\newcommand{\ch}{\mathrm{ch}}
\newcommand{\cob}{\overline{\mathrm{CO}}}
\def\co{\colon\thinspace}
\font\msamfont=msam10
\def\qed{\hbox{~~\msamfont\char'003}}
\def\a1{{a^{-1}}}
\def\b1{{b^{-1}}}
\def\c1{{c^{-1}}}
\def\d1{{d^{-1}}}
\def\e1{{e^{-1}}}
\def\f1{{f^{-1}}}
\newcommand{\bbb}[1]{\ensuremath{\mathbb{#1}}}
\newcommand{\Z}{\bbb{Z}}
\newcommand{\script}[1]{\ensuremath{\mathcal{#1}}}
\def\AAA{\script{A}}
\def\BBB{\script{B}}
\def\CCC{\script{C}}
\def\FFF{\script{F}}
\def\HH{\script{H}}
\def\KK{\script{K}}
\def\NNN{\script{N}}
\newcommand{\smallcaps}[1]{\textrm{\textsc{#1}}}
\newcommand{\join}{\smallcaps{Join}}
\newcommand{\link}{\smallcaps{link}}
\newcommand{\ba}{\begin{array}}
\newcommand{\ea}{\end{array}}
\newcommand{\bc}{\begin{center}}
\newcommand{\ec}{\end{center}}
\newcommand{\be}{\begin{enumerate}}
\newcommand{\ee}{\end{enumerate}}
\newcommand{\bd}{\begin{defn}}
\newcommand{\ed}{\end{defn}}
\newcommand{\bi}{\begin{itemize}}
\newcommand{\ei}{\end{itemize}}
\newcommand{\bs}{\begin{slide}}
\newcommand{\es}{\vfil\end{slide}}
\newcommand{\bt}{\begin{tabular}}
\newcommand{\et}{\end{tabular}}
\begin{document}

\title{On  Right-Angled Artin Groups Without Surface Subgroups}
\author{Sang-hyun Kim}
\address{Department of Mathematics, the University of Texas at Austin, Austin, TX 78712-0257 \\ USA}
\email{shkim@math.utexas.edu}
\date{\today}


\subjclass[2000]{Primary 20F36, 20F65; Secondary 05C25}
\keywords{right-angled Artin group, graph group, surface group, label-reading map}

\begin{abstract}    
We study the class $\NNN$ of graphs, the right-angled Artin groups defined 
on which do not contain surface subgroups. 
We prove that a presumably smaller class $\NNN'$ is closed under amalgamating along complete subgraphs,
and also under adding bisimplicial edges. It follows that chordal graphs and
chordal bipartite graphs belong to $\NNN'$.
\end{abstract}

\maketitle



\section{Introduction}\label{sec:intro}

In this paper, a {\em graph} will mean a finite graph without loops or multi-edges. For a graph $\Gamma$, let $V(\Gamma)$ and $E(\Gamma)$ denote the vertex set and the edge set of $\Gamma$, respectively.
The {\em right-angled Artin group with the underlying graph
$\Gamma$} is the group presentation
\[
A(\Gamma) = \langle V(\Gamma) \; | \; [u,v]=1\mbox{ for }\{u,v\}\in E(\Gamma)\rangle
.\]
Also known as {\em graph groups} or {\em partially commutative groups}, right-angled Artin groups  possess various group theoretic properties. 
One of the most fundamental results is that, two right-angled Artin groups are isomorphic if and only if their underlying graphs are isomorphic~\cite{KMNR1980,droms1987}. 
Right-angled Artin groups are  linear~\cite{humphries1994,HW1999,DJ2000}, biorderable~\cite{DT1992, CW2004}, 
and acting on finite-dimensional CAT(0) cube complexes freely and cocompactly~\cite{CD1995,MV1995,NR1998}. Any subgroup of a right-angled Artin group
surjects onto $\Z$~\cite{howie1982}.

Certain group theoretic properties of $A(\Gamma)$ can be 
detected by graph theoretic properties of $\Gamma$.
The {\em complement graph}  of  a graph $\Gamma$ is the graph $\overline\Gamma$,
defined by
$V(\overline\Gamma)=V(\Gamma)$ and $E(\overline\Gamma)=\{ \{u,v\} \; | \; \{u,v\}\not\in E(\Gamma)\}$.
For a subset $S$ of $V(\Gamma)$, the {\em induced subgraph $\Gamma_S$ of $\Gamma$ on $S$}
is the maximal subgraph of $\Gamma$ with the vertex set $S$. We also write $\Gamma_S\le\Gamma$.
Note that
$V(\Gamma_S)=S$ and 
$E(\Gamma_S)=\{\{u,v\}\;|\;u,v\in S\textrm{ and }\{u,v\}\in E(\Gamma)\}$.
If $\Lambda$ is another graph, 
an {\em induced} $\Lambda$ in $\Gamma$  is
an induced subgraph of $\Gamma$, which is isomorphic to $\Lambda$. 
An elementary fact is, if $\Gamma$ contains an induced $\Lambda$,
then $A(\Lambda)$ embeds into $A(\Gamma)$.
A {\em complete graph} $K_n$ is a graph with $n$ vertices 
such that every pair of distinct vertices are joined by an edge.
For convention, $K_0=\varnothing$ is considered also as a complete graph.
$C_n$ and $P_n$ denote the cycle and the path with $n$ vertices, respectively.
In particular, $P_n$ is obtained by removing an edge in $C_n$.
$C_3$ is also called a {\em triangle}.
A graph is  {\em chordal} if the graph does not contain any induced cycle of length at least 4.
Graph theoretic characterizations of $\Gamma$
determine several group theoretic
properties of $A(\Gamma)$:
\begin{itemize}
\item
$A(\Gamma)$ is 
coherent (i.e.~every finitely generated subgroup is finitely presented), if and only if 
$\Gamma$ is {\em chordal}~\cite{droms1987a},
if and only if
 $A(\Gamma)$ has a free commutator subgroup~\cite{SDS1989}.
\item
$A(\Gamma)$ is 
subgroup separable (i.e.~every finitely generated subgroup is closed in the profinite topology),
 if and only if $\Gamma$ does not contain an induced $C_4$ or an induced $P_4$~\cite{MR2006},
 which happens exactly when
  every subgroup of $A(\Gamma)$ is also a right-angled
Artin group~\cite{droms1987b}.
\item
$A(\Gamma)$ is 
virtually a 3--manifold group, if and only if each connected component 
of $\Gamma$ is a tree or  a triangle~\cite{droms1987a,gordon2004}. 
\end{itemize} 

In this paper, a {\em surface} will mean a compact, oriented $2$--manifold.
A {\em closed} ({\em compact}, respectively) {\em hyperbolic surface group} will mean the fundamental group of 
a closed (compact, respectively) hyperbolic surface.
Finding sufficient and necessary conditions for a given group 
to contain a closed hyperbolic surface group
is an important question motivated by 3--manifold theory. 
In this article, we consider this question in the case of right-angled Artin groups.
Namely, we investigate
 \[ \NNN = \{ \Gamma \; | \; A(\Gamma)\mbox{ does not contain a closed hyperbolic surface group} \} .\]
 
$A(\Gamma)$ is known to contain a closed hyperbolic surface group if
$\Gamma$ has an induced $C_n$ ($n\ge5$)~\cite{SDS1989} 
or an induced $\overline{C_n}$ ($n\ge5$)~\cite{kim2008} (proved by the author). 
That is, $C_n$ and $\overline{C_n}$ are not in $\NNN$ for $n\ge5$. The  classification of
 the graphs in $\NNN$ with at most 8 vertices is given in~\cite{CSS2008}. 
 A {\em complete graph amalgamation} of two graphs is the union of the two graphs such that their intersection is complete;
in particular, those two graphs will be induced subgraphs of the union.
 A key stumbling block for the 
 (graph theoretic) characterization of $\NNN$ is the following conjecture.
 
 \begin{conj}\label{conj:nclosedkn}
 $\NNN$ is closed under complete graph amalgamation.
 \end{conj}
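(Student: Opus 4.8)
A natural strategy to attack Conjecture~\ref{conj:nclosedkn} proceeds through Bass--Serre theory. A complete graph amalgamation $\Gamma=\Gamma_1\cup_K\Gamma_2$ induces a splitting
\[
A(\Gamma)=A(\Gamma_1)\ast_{A(K)}A(\Gamma_2),\qquad A(K)\cong\Z^{|V(K)|},
\]
with free abelian edge group. Suppose, for contradiction, that $A(\Gamma)$ contains a closed hyperbolic surface group $S=\pi_1(\Sigma)$, and let $S$ act on the Bass--Serre tree $T$ of this splitting. If $S$ fixes a vertex of $T$ then it is conjugate into $A(\Gamma_1)$ or $A(\Gamma_2)$, contradicting $\Gamma_i\in\NNN$; so we may assume the action is minimal and fixed-point free. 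The first step is to identify the induced graph-of-groups decomposition $\mathcal G$ of $S$. Each edge group is of the form $S\cap gA(K)g^{-1}$, hence abelian, hence cyclic because $S$ is word-hyperbolic; it cannot be trivial since $S$ is one-ended; so every edge group is infinite cyclic. Every vertex group has infinite index in $S$ (a finite-index vertex group would, by minimality of the action, force a global fixed point), and infinite-index subgroups of a closed surface group are free; hence the vertex groups of $\mathcal G$ are finitely generated free groups, each conjugate into $A(\Gamma_1)$ or into $A(\Gamma_2)$, while the $\Z$ edge groups are each conjugate into the ``parabolic'' subgroup $\Z^{|V(K)|}=A(K)$ of the relevant $A(\Gamma_i)$.

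The goal is then to derive a contradiction by extracting a closed hyperbolic surface group already inside $A(\Gamma_1)$ or $A(\Gamma_2)$. One expects a ``push-off'' argument: the decomposition $\mathcal G$ corresponds to cutting $\Sigma$ along an essential multicurve $c=c_1\cup\cdots\cup c_r$, with the complementary subsurfaces carried by $\Gamma_1$ or by $\Gamma_2$ and each $c_j$ carried by $K$. Passing through the author's label-reading maps --- which turn a $\pi_1$-injective map $\Sigma\to X_\Gamma$ into the Salvetti complex into a cell structure on $\Sigma$ by squares whose edges are labelled by $V(\Gamma)$ --- this presents $\Sigma$ as an assembly of labelled subsurfaces, each using only labels from $V(\Gamma_1)$ or only labels from $V(\Gamma_2)$, glued along annuli whose cores represent elements conjugate into $A(K)$. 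Since $K$ lies in both $\Gamma_1$ and $\Gamma_2$, each gluing annulus is admissible on either side, and the plan is to show the assembly can always be rechosen so that \emph{all} of its pieces lie on a single side. The algebraic leverage should come from the strong control one has over elements of a right-angled Artin group lying in a canonical abelian parabolic --- via Servatius' centraliser theorem and the retraction of $A(\Gamma_i)$ onto $A(K)$ --- which pins down the possible gluings of the peripheral $\Z$'s.

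The main obstacle is the gap between the purely group-theoretic definition of $\NNN$ and the combinatorial flexibility that the push-off requires. Inside $A(\Gamma_i)$, for an arbitrary $\Gamma_i\in\NNN$, a free group can embed in a great many ways, and a priori the amalgamation could assemble a closed hyperbolic surface group out of free vertex pieces --- none of them a surface group --- by a clever matching of their boundary classes through the abelian edge group. To exclude this one would need a normal form for every $\pi_1$-injective bounded labelled subsurface of $X_{\Gamma_i}$, uniformly over $\Gamma_i\in\NNN$, whereas membership in $\NNN$ guarantees only the absence of \emph{closed} such surfaces and says nothing about bounded ones. This is precisely why one passes to the smaller class $\NNN'$, whose defining property is stated directly in terms of (reduced) label-reading maps and is therefore stable under the surgeries above; the closure of $\NNN'$ under complete graph amalgamation, and under adding bisimplicial edges, can then be proved by manipulating labelled surfaces. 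Consequently, Conjecture~\ref{conj:nclosedkn} would follow, for instance, from a proof that $\NNN=\NNN'$ --- that \emph{any} closed hyperbolic surface subgroup of $A(\Gamma)$ already forces a reduced label-reading surface on $\Gamma$ --- or, alternatively, from a substitute obstruction, such as an Euler-characteristic or cohomological one, to building a closed hyperbolic surface from groups amalgamated over an abelian subgroup.
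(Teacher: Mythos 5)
This statement is a \emph{conjecture}; the paper does not prove it, and neither do you --- and to your credit, your write-up is honest about that. What you have produced is not a proof but a correct diagnosis of why the natural approach stalls, and your diagnosis matches the paper's own reasoning closely. Your Bass--Serre analysis (minimal action on the tree of the splitting $A(\Gamma_1)\ast_{A(K)}A(\Gamma_2)$, infinite cyclic edge groups, free infinite-index vertex groups, cutting the surface along a multicurve carried by $A(K)$) is essentially the transversality argument the paper sketches in the paragraph after Conjecture~\ref{conj:nclosedkn}: from $\Gamma\notin\NNN$ one extracts a compact hyperbolic surface with a $\pi_1$-injective map to $X_{\Gamma_i}$ whose boundary lands in $X_K$. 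That is precisely the content of Lemma~\ref{lem:nni}, and it is the reason the paper introduces the relative class $\NNN'$. You also correctly identify the real obstruction: membership in $\NNN$ controls only \emph{closed} surface subgroups, so after cutting one only knows the pieces are free groups with boundary in $A(K)$, and there is no reason the assembly pushes to one side; this is exactly the gap that $\NNN'$ is designed to close. Finally, your closing observation that the conjecture ``would follow from $\NNN=\NNN'$'' is part of what the paper actually proves in Proposition~\ref{prop:nniequiv}, where the implication is in fact an equivalence (via Lemma~\ref{lem:promotion} and Theorem~\ref{thm:completeamalgam}). So: your analysis is sound and consistent with the paper, but be explicit that you have reduced the conjecture rather than settled it, and note that the reduction runs in both directions.
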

 
For a graph $\Gamma$,  
a cube complex $X_\Gamma$ is inductively defined as follows.
\enumir
\be
\item
$X_\Gamma^{(0)}$ is a single vertex.
\item
Suppose $X_\Gamma^{(k-1)}$ is constructed, so that each complete subgraph of $\Gamma$
with $i$ vertices $(i<k)$ corresponds to an $i$--torus.
Let  $K$ be a complete subgraph of $\Gamma$ with $k$ vertices.
Glue a unit $k$--cube to $X_\Gamma^{(k-1)}$ so that distinct pairs of parallel faces are glued to
distinct $(k-1)$--tori corresponding to the complete subgraphs of $K$ with $k-1$ vertices.
\ee
$X_\Gamma$, called
the {\em Salvetti complex of $A(\Gamma)$}, is 
 a locally CAT(0) cube complex, on which $A(\Gamma)$ acts freely and cocompactly.
In particular, $X_\Gamma$ is a $K(A(\Gamma),1)$--space~\cite{charney2007}.

Suppose $\Gamma = \Gamma_1\cup \Gamma_2$, such that $K = \Gamma_1\cap\Gamma_2$
 is complete and $\Gamma\not\in\NNN$.
One can find
 a closed hyperbolic surface $S$, and 
 a $\pi_1$--injective map $f\co S\rightarrow X_\Gamma$. Since $A(\Gamma)$ is 
the amalgamated free product of $A(\Gamma_1)$ and $A(\Gamma_2)$
   along a free abelian subgroup $A(K)$, a transversality argument shows that there exists a compact
   hyperbolic surface $S_1\subseteq S$ and a $\pi_1$--injective map $g \co S_i\rightarrow X_{\Gamma_i}$ such that 
   $g(\partial S_i)\subseteq X_K$, for $i=1$ or $2$.  
Hence in order to approach
Conjecture~\ref{conj:nclosedkn},
  it is  natural to consider
a {\em relative embedding} of a compact hyperbolic surface group into $A(\Gamma)$, as follows.

\begin{defn}\label{defn:relative}
Let $\Gamma$ be a graph and $S$ be a compact hyperbolic surface. An embedding $\phi\co\pi_1(S)\rightarrow A(\Gamma)$
is called a {\em relative embedding} if
$\phi=f_*$ for some $\pi_1$--injective map
$f\co S\rightarrow X_\Gamma$  satisfying the following:
\begin{quote}
for each boundary component $\partial_i S$ of $S$, there exists a complete subgraph $K\le\Gamma$ such that
$f(\partial_i S) \subseteq X_K$.
\end{quote}
\end{defn}
\enumia

Define $\NNN'$ to be the class of graphs $\Gamma$ such that
$A(\Gamma)$ does not allow a relative embedding of a compact hyperbolic surface group. 
It is vacuously true that
$\NNN'\subseteq\NNN$. Also,
the paragraph preceding Definition~\ref{defn:relative} has proved the following.

\begin{lem}\label{lem:nni}
If $\Gamma\not\in\NNN$ and $\Gamma$ is a complete graph amalgamation of $\Gamma_1$
and $\Gamma_2$, then $\Gamma_i\not\in\NNN'$ for $i=1$ or $2$.\qed
\end{lem}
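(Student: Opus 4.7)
The plan is to make precise the transversality sketch given in the paragraph preceding Definition~\ref{defn:relative}. Since $\Gamma\not\in\NNN$, choose a closed hyperbolic surface $S$ and a $\pi_1$--injective map $f\co S\to X_\Gamma$; such an $f$ exists because $X_\Gamma$ is a $K(A(\Gamma),1)$. Because $K$ is complete, $A(K)\cong\Z^{|V(K)|}$ embeds into both $A(\Gamma_i)$, and one has the amalgamated product $A(\Gamma)=A(\Gamma_1)\ast_{A(K)}A(\Gamma_2)$, realized geometrically by $X_\Gamma=X_{\Gamma_1}\cup_{X_K}X_{\Gamma_2}$ with $X_K$ an embedded subtorus.

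Next I would homotope $f$ to be transverse to $X_K$, so that $C:=f^{-1}(X_K)\subseteq S$ is a disjoint union of simple closed curves. A standard surface--topology cleanup removes null--homotopic components of $C$: an innermost such component bounds a disk $D\subset S$; since $X_K\hookrightarrow X_\Gamma$ is $\pi_1$--injective, $f|_{\partial D}$ is also null--homotopic in $X_K$, so one may homotope $f|_D$ into $X_K$ and then push it off, strictly decreasing $|C|$. After finitely many such moves, every component of $C$ is essential in $S$, and cutting $S$ along $C$ yields compact subsurfaces $\{S_\alpha\}$ with $f(S_\alpha)\subseteq X_{\Gamma_{i(\alpha)}}$ and $f(\partial S_\alpha)\subseteq X_K$ for some index $i(\alpha)\in\{1,2\}$.

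Because each component of $C$ is essential, each inclusion $S_\alpha\hookrightarrow S$ is $\pi_1$--injective, and combined with the $\pi_1$--injectivity of $f$ and of $X_{\Gamma_{i(\alpha)}}\hookrightarrow X_\Gamma$ this gives an injection $(f|_{S_\alpha})_*\co\pi_1(S_\alpha)\hookrightarrow A(\Gamma_{i(\alpha)})$. From $\chi(S)=\sum_\alpha\chi(S_\alpha)<0$ and $\chi(S_\alpha)\le 0$ for each $\alpha$, some $S_\alpha$ must satisfy $\chi(S_\alpha)<0$, hence is a compact hyperbolic surface, and for that index $f|_{S_\alpha}$ is a relative embedding of $\pi_1(S_\alpha)$ into $A(\Gamma_{i(\alpha)})$, so $\Gamma_{i(\alpha)}\not\in\NNN'$. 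The degenerate case $C=\varnothing$ is immediate: then $f(S)$ lies in some $X_{\Gamma_i}$ and $S$ itself works, with vacuously empty boundary.

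The main obstacle is executing the cleanup rigorously inside the cube complex $X_\Gamma$: arranging a transverse representative of $f$, and performing disk compressions that preserve $\pi_1$--injectivity of $f$. These issues are essentially standard once one invokes the local convexity and $\pi_1$--injectivity of $X_K\hookrightarrow X_\Gamma$ (or, equivalently, the Bass--Serre action of $\pi_1(S)$ on the tree of the splitting $A(\Gamma)=A(\Gamma_1)\ast_{A(K)}A(\Gamma_2)$), but they carry the bulk of the actual work behind the one--sentence sketch in the paper.
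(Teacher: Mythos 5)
Your argument is correct and is essentially the paper's own proof. The paper treats the transversality sketch in the paragraph immediately preceding Definition~\ref{defn:relative} as the proof of this lemma (which is why the statement carries a terminal box rather than a separate proof environment), and you have filled in the standard details of that sketch: transversality to $X_K$, disk-compression of null-homotopic preimage curves, the Euler-characteristic count, and $\pi_1$-injectivity of the complementary pieces.
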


For a compact surface $S$ and a set $V$, a {\em $V$--dissection on $S$} is a pair $(\HH,\lambda)$ such that
$\HH$ is a set of transversely oriented simple closed curves and properly embedded arcs on $S$, and
$\lambda\co\HH\rightarrow V$~\cite{CW2004}. 
For each $\gamma\in\HH$, $\lambda(\gamma)$ is called the {\em label} of $\gamma$.
Note that our definition allows curves or arcs of the same label to intersect, while the definition in~\cite{CW2004} does not.
Let $\Gamma$ be a graph and $(\HH,\lambda)$ be
a $V(\Gamma)$-dissection.
Suppose that for any $\alpha$ and $\beta$ in $\HH$, $\alpha\cap\beta\ne\varnothing$ only if $\lambda(\alpha)$ and $\lambda(\beta)$
are equal or adjacent in $\Gamma$. Then $(\HH,\lambda)$
 determines a map $\phi\co\pi_1(S)\rightarrow A(\Gamma)$. $\phi$ maps
the equivalence class of a based, oriented loop $\alpha\subseteq S$  onto the word in $A(\Gamma)$, obtained by
reading off the labels of the curves and the arcs in $\HH$ that $\alpha$ intersects, and recording the order  and 
the transverse orientations of the intersections. 
That is, when $\alpha$ crosses $\gamma\in\HH$, one records $\lambda(\gamma)$ or $\lambda(\gamma)^{-1}$,
according to whether the orientation of $\alpha$ matches the transverse orientation of $\gamma$. This map 
 $\phi\co\pi_1(S)\rightarrow A(\Gamma)$
 is called the {\em label-reading map with respect to } (or, {\em induced by}) $(\HH,\lambda)$, and 
$(\HH,\lambda)$ is called a {\em label-reading pair with the underlying graph $\Gamma$}. In~\cite{CW2004}, it is shown that any map $\pi_1(S)\rightarrow A(\Gamma)$ can
be realized as a label-reading map. By studying this label-reading pair, we will prove that a relative embedding of a compact hyperbolic surface group
into $A(\Gamma)$ can be { promoted} to an embedding of a closed hyperbolic surface group into $A(\Gamma^*)$, for some graph $\Gamma^*$ which is strictly larger than $\Gamma$ (Lemma~\ref{lem:promotion}). This plays a crucial role in the proof of the following theorem.

\begin{thm312}\label{thm:312}
$\NNN'$ is closed under complete graph amalgamation.
\end{thm312}

$K_n\in\NNN'$, since $A(K_n)\cong \Z^n$ does not contain any non-abelian free group. A classical result of Dirac shows that
any chordal graph can be constructed by taking complete graph amalgamations successively, starting from complete graphs~\cite{dirac1961,golumbic2004}. So, we have:

\begin{cor313}\label{cor:313}
All chordal graphs  are in $\NNN'$.
\end{cor313}

Given a label-reading pair $(\HH,\lambda)$ inducing $\phi \co \pi_1(S)\rightarrow A(\Gamma)$, we will define 
the {\em complexity} of $(\HH, \lambda)$.
A label-reading pair $(\HH,\lambda)$  is called {\em normalized} if 
  the complexity of $(\HH,\lambda)$ is minimal in the lexicographical ordering, 
  among all the label-reading pairs inducing the same map up to conjugation in $A(\Gamma)$.
   Certain properties of a label-reading map can be more easily detected by looking at this normalized label-reading pairs.
 An edge $\{a,b\}$ of a graph $\Gamma$ is called {\em bisimplicial}
 if any vertex adjacent to $a$ is either equal or adjacent to
any vertex that is adjacent to $b$~\cite{GG1978}. For $e\in E(\Gamma)$, $\mathring{e}$ denote the interior of $e$.
 
 \begin{thm51}\label{thm:51}
 Let $e$ be a bisimplicial edge of $\Gamma$. If  $\Gamma\setminus\mathring{e}\in\NNN'$, then $\Gamma\in\NNN'$.
 \end{thm51}

A {\em chordal bipartite} graph is a graph that does not contain a triangle or an induced cycle of length at least 5.
By definition, a chordal bipartite graph is not necessarily chordal.
Any chordal bipartite graph can be obtained by successively attaching bisimplicial edges, starting from 
a discrete graph~\cite{GG1978}. Using this, we prove:
 
\begin{cor52}\label{cor:52}
All chordal bipartite graphs are in $\NNN'$.
\end{cor52}

In particular, if $\Gamma$ is chordal or chordal bipartite, then $A(\Gamma)$ does not contain a closed hyperbolic surface group.
This first appeared in~\cite{kim2007} and also follows from~\cite{CSS2008}.

We will also prove that $\NNN'$ is closed under a certain graph operation, called {\em co-contraction}~\cite{kim2008}.
Using this, a lower bound for $\NNN'$ will be given. 
We will provide new examples of
 right-angled Artin groups that contain closed hyperbolic surface groups, by using co-contraction and
 results in~\cite{CSS2008}.
 Lastly,
 we will describe an equivalent formulation of Conjecture~\ref{conj:nclosedkn}. A vertex of a graph $\Gamma$ is called 
 {\em simplicial}, 
 if the link of the vertex induces a complete subgraph of $\Gamma$.

\enumir
\begin{prop64}\label{prop:64}
The following are equivalent.
\be
\item
$\NNN$ is closed under complete graph amalgamation.
\item
For any graph $\Gamma$, if  the graph obtained by removing a simplicial vertex from $\Gamma$  is in $\NNN$, 
then $\Gamma$ is also in $\NNN$.
\item
$\NNN' = \NNN$
\ee
\end{prop64}
\enumia

{\bf Note.}  All the results in this article, except for Example~\ref{exmp:nine}, originally appeared in the Ph.D. thesis of the author~\cite{kim2007}. 
After submission of his thesis, 
the author came to know that Crisp, Sageev and Sapir  proved similar results to
 Corollary~\ref{cor:chordal} and
Theorem~\ref{thm:bisimplicial}, 
where $\NNN'$ is replaced by the presumably larger class $\NNN$~\cite[Proposition 3.2 and Lemma 6.3]{CSS2008}.
A special case of Lemma~\ref{lem:promotion}
can also be deduced from~\cite[Theorem 4.2]{CSS2008}. That is the case when 
there exists a fixed complete subgraph $K\le\Gamma$ and 
a relative embedding $\phi:\pi_1(S)\rightarrow A(\Gamma)$ for some compact hyperbolic surface $S$,
such that the image of
each peripheral element of $\pi_1(S)$ is conjugate into $A(K)$.
Their work is independent from the author, and the arguments are completely different.

{\bf Acknowledgement.} 
I would like to thank my Ph.D. thesis advisor, Professor Andrew Casson for sharing his deep insights and valuable advice that guided me through this work.
I am also grateful to Professor Alan Reid for many helpful comments on earlier versions of this article.
Lastly, I am thankful for exceptionally kind and detailed feedback from an anonymous referee, particularly for hinting Remark~\ref{rem:promotion}.

\section{Label-Reading Maps} \label{sec:prelim}

In this section, we review  basic properties of label-reading maps from  surface groups into right-angled Artin groups. 
We owe most of the  definitions and the results in this section to~\cite{CW2004}. 

Recall our convention that we only consider oriented surfaces.
Let $S$ be a compact  surface, with an arbitrarily chosen base in its interior.
From the orientation of $S$, the boundary components of $S$ inherit 
 canonical orientations, so that $\sum [\partial_i S]=0$ in $H_1(S)$.
 We will often abbreviate a closed curve and a properly embedded arc on $S$
as  a {\em curve} and an {\em arc}, respectively.
We assume that a curve or an arc is given with an orientation, which is arbitrarily chosen unless specified.
Suppose $\Gamma$ is a graph.
In $A(\Gamma)$,
a {\em letter} means $v^{\pm1}$ for some $v\in V(\Gamma)$, and a {\em word} means a sequence of letters. A word represents an element in $A(\Gamma)$.  
A {\em label-reading pair on $S$ with the underlying graph $\Gamma$} is a pair $(\HH,\lambda)$ such that
  $\HH$ is a set of simple closed curves and properly embedded arcs on $S$,
  and  $\lambda\co\HH\rightarrow V(\Gamma)$, called the {\em labeling}, satisfies that
   $\alpha\cap\beta\ne\varnothing$ only if
  $[\lambda(\alpha),\lambda(\beta)]=1$ in $A(\Gamma)$. 
 Here, curves and arcs in $\HH$ with the same label are allowed to intersect. This difference from~\cite{CW2004} leaves
 most of the results and the arguments in~\cite{CW2004} still valid.
 We simply call $(\HH,\lambda)$ as a {\em label-reading pair} if its underlying graph is obvious
 from the context.
For $a\in V(\Gamma)$,  an {\em $a$--curve} and an {$a$--arc} in $\HH$
will mean a curve and an arc, respectively, labeled by $a$.
For each based loop $\gamma$ transversely intersecting $\HH$, 
one follows $\gamma$ starting from the base point; whenever $\gamma$ intersects
  $\alpha\in\HH$, one records $\lambda(\gamma)$ or $\lambda(\gamma)^{-1}$, according to whether the orientation of $\gamma$
  coincides with the transverse orientation of $\alpha$ or not. The word $w_\gamma$ thus obtained is called the {\em label-reading}
  of $\gamma$ with respect to $(\HH,\lambda)$.
  Note that the label-reading $w_\gamma$ can also be defined if $\gamma$ is
an oriented arc, instead of an oriented curve.
The map $\phi\co\pi_1(S)\rightarrow A(\Gamma)$, defined by $\phi([\gamma])=w_\gamma$,
  is
  called {\em the label-reading map} with respect to $(\HH,\lambda)$. 
  
Conversely, suppose $\phi\co\pi_1(S)\rightarrow A(\Gamma)$ is an arbitrary map. Write
\[\pi_1(S) = <x_1,x_2,\ldots,
x_g,y_1,y_2,\ldots,y_g,d_1,d_2,\ldots,d_m|
\prod_{i=1}^g[x_i,y_i] \prod_{i=1}^m d_i>.\]
Here $d_1,d_2,\ldots,d_m$ correspond to 
the boundary components $\partial_1 S,\partial_2 S, \ldots,
\partial_m S$ of $S$.
Draw a {\em dual van Kampen diagram} $\Delta$ for the following word in $A(\Gamma)$~\cite{olshanskii1989,kim2008}:
\[
w=\prod_{i=1}^g[\phi(x_i),\phi(y_i)] \prod_{i=1}^m \phi(d_i).\]
 \begin{figure}[bt!] 
 \centering
\subfigure[]{
\labellist 
\small\hair 2pt 
\pinlabel {${}_{c^{-1}}$} [b] at 18 26
\pinlabel {${}_b$} [b] at 112 45
\pinlabel {${}_{a^{-1}}$} [b] at 117 80
\pinlabel {${}_{b^{-1}}$} [b] at 65 115
\pinlabel {${}_c$} [b] at 18 93
\pinlabel {${}_a$} [b] at 65  7
\pinlabel {${}_w$} [r] at 0 36
\pinlabel {$\rightarrow$} [l] at 54 94
\pinlabel {$\leftarrow$} [r] at 43 64
\pinlabel {${}_c$} [l] at 23 65
\pinlabel {$\searrow$} [l] at 56 50
\pinlabel {${}_b$} [r] at 56 95
\pinlabel {${}_a$} [rb] at 75 38
\endlabellist 
\includegraphics{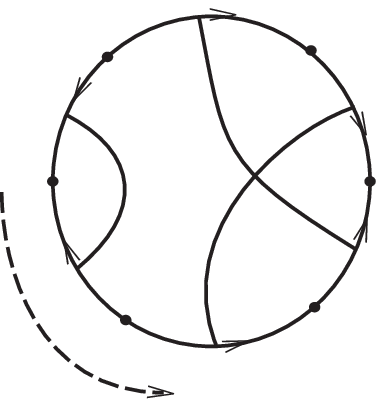}
}
\hspace{0.7in}
\subfigure[]{
 \labellist 
\small\hair 2pt 
\pinlabel {${}_{\phi(x_i)}$} [b] at 110 90
\pinlabel {${}_{\phi(y_i)}$} [b] at 110 30
\pinlabel {$\scriptstyle \Delta$}   at 60 65
\pinlabel {${}_{\phi(x_i)^{-1}}$} [b] at 59 0
\pinlabel {${}_{\phi(y_i)^{-1}}$} [b] at 5 30
\pinlabel {${\iddots}$}    at 25 116
\endlabellist 
\includegraphics{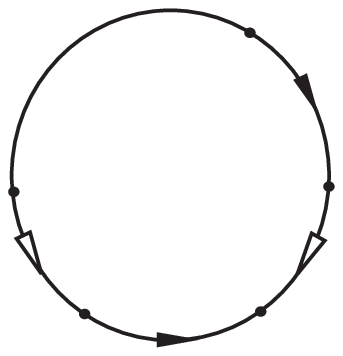}
}
\caption{(a) A dual van Kampen diagram $\Delta$ for the word $w = c^{-1}aba^{-1}b^{-1}c$ in the right-angled Artin group
$\langle a, b, c | [a,b]=1\rangle$. (b) Identifying intervals on $\partial \Delta$.
\label{fig:dualvk}}
\end{figure}
Recall, this means $\Delta$ is a disk along with a set of transversely oriented, properly embedded arcs labeled by $V(\Gamma)$, such that the label-reading of $\partial\Delta$ with respect to these arcs
 is $w$ (Figure~\ref{fig:dualvk}\;(a)).
 $\partial\Delta$ is subdivided into segments,
so that the label-reading of each segment is a letter in $V(\Gamma)^{\pm1}$.
Glue the boundary of $\Delta$ 
by identifying $\phi(x_i)$ with $\phi(x_i)^{-1}$, and also 
$\phi(y_i)$ with $\phi(y_i)^{-1}$, as in Figure~\ref{fig:dualvk}\;(b).
Then one obtains  $S$ back,
with a set $\HH$ of transversely oriented curves and  arcs on $S$ and a labeling map $\lambda\co\HH\rightarrow V(\Gamma)$.
It follows that $(\HH,\lambda)$ is a label-reading pair,
and $\phi$ is the label-reading map
with respect to $(\HH,\lambda)$ up to conjugation in $A(\Gamma)$.
Moreover, if $\phi$ is a relative embedding, then for each $i$ there exists a complete subgraph
$K\le \Gamma$ such that 
$\phi(d_i) = w_i'^{-1} w_i w_i'$ for some  $w_i\in A(K)$ and $w_i'\in A(\Gamma)$.
In this case, 
$(\HH,\lambda)$
 can be chosen so that any arc in $\HH$
intersecting with a  boundary component 
$\partial_i S$ is labeled by a letter in $V(K)$.
This is achieved by gluing the words $w_i'$ and $w_i'^{-1}$ in our construction. We summarize this as follows.

\enumia
\begin{prop}[\cite{CW2004,kim2007}]\label{prop:lrexists}
Let $S$ be a compact surface and $\Gamma$ be a graph.
Suppose $\phi\co\pi_1(S)\rightarrow A(\Gamma)$ is a map.
\be
\item
$\phi$ is a label-reading map with respect to some label-reading pair $(\HH,\lambda)$
with the underlying graph $\Gamma$.
\item
If $\phi$ is a relative embedding, then one can find $(\HH,\lambda)$ in (1), satisfying the following:
for each boundary component $\partial_i S$,
there exists a complete subgraph $K\le \Gamma$ such that 
any arc in $\HH$ intersecting with $\partial_i S$ is labeled by a vertex in $V(K)$.
\ee
\end{prop}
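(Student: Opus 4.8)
The plan is to carry out, and then verify, the explicit cut‑and‑glue construction sketched just before the statement, paying attention to two points: that the glued‑up polygon is genuinely homeomorphic to $S$, and, for (2), that an extra fold still yields $S$ while forcing the boundary labels into $V(K)$.

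First, for (1): since $\phi$ is a homomorphism and $\pi_1(S)$ has the single defining relator $R=\prod_{i=1}^g[x_i,y_i]\prod_{i=1}^m d_i$, after fixing words over $V(\Gamma)^{\pm1}$ representing $\phi(x_i),\phi(y_i),\phi(d_i)$ the word $w=\prod_{i=1}^g[\phi(x_i),\phi(y_i)]\prod_{i=1}^m\phi(d_i)$ represents $1$ in $A(\Gamma)$. I would then invoke the existence of a dual van Kampen diagram $\Delta$ for $w$ over the presentation $\langle V(\Gamma)\mid[u,v]:\{u,v\}\in E(\Gamma)\rangle$ (\cite{olshanskii1989,kim2008}): a disk carrying finitely many transversely oriented properly embedded labeled arcs, with $\partial\Delta$ reading $w$ and with two arcs crossing only if their labels coincide or are adjacent in $\Gamma$ --- the latter because the dual picture of a defining relator $[u,v]$ is a single pair of transverse arcs labeled $u$ and $v$. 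Subdivide $\partial\Delta$ into one sub‑edge per letter of $w$, and collect consecutive sub‑edges into \emph{blocks} matching the syllables $\phi(x_i),\phi(y_i),\phi(x_i)^{-1},\phi(y_i)^{-1},\phi(d_i)$ of $w$.

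Next, I would glue the $\phi(x_i)$‑block to the $\phi(x_i)^{-1}$‑block sub‑edge by sub‑edge, reversing the induced boundary orientation --- legitimate since the two blocks read mutually inverse words --- and likewise the $\phi(y_i)$‑block to the $\phi(y_i)^{-1}$‑block. This is exactly the standard polygonal presentation of the genus‑$g$ surface with $m$ boundary circles, so the quotient is homeomorphic to $S$, with $\partial_iS$ the image of the $\phi(d_i)$‑block. The arcs of $\Delta$ descend: across each glued pair of blocks an arc ending on one sub‑edge joins the arc ending on the matching sub‑edge (same label, compatibly transversely oriented), so the images form a set $\HH$ of transversely oriented simple closed curves and properly embedded arcs on $S$, and the inherited labelling $\lambda$ satisfies the intersection condition. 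Reading the generators $x_i,y_i,d_i$ off this polygonal model returns $\phi$ up to conjugation by the label‑word of a path from the chosen starting vertex to the base point of $S$ (\cite{CW2004}); this gives (1).

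For (2): from the witnessing $\pi_1$‑injective $f\co S\to X_\Gamma$ with $f(\partial_iS)\subseteq X_{K_i}$ and $K_i$ complete, and using that $X_{K_i}\subseteq X_\Gamma$ induces $A(K_i)\hookrightarrow A(\Gamma)$ on $\pi_1$, I would read off $\phi(d_i)=w_i'^{-1}w_iw_i'$ with $w_i\in A(K_i)$ (the boundary loop) and $w_i'\in A(\Gamma)$ (a connecting path), taking the representative of $w_i$ to be a word over $V(K_i)^{\pm1}$. Then I would rerun the construction with $w=\prod_{i=1}^g[\phi(x_i),\phi(y_i)]\prod_{i=1}^m w_i'^{-1}w_iw_i'$, still trivial in $A(\Gamma)$, so that the block reading $\phi(d_i)$ is now three consecutive blocks reading $w_i'^{-1}$, $w_i$, $w_i'$; when gluing up, in addition to the identifications above, identify the $w_i'^{-1}$‑block with the $w_i'$‑block, reversing orientation. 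Since these two blocks are adjacent along $\partial\Delta$, separated only by the $w_i$‑block, which is glued to nothing, this extra identification merely zips a collar shut, so the quotient is still homeomorphic to $S$, but now $\partial_iS$ is the image of the $w_i$‑block; hence the word read along $\partial_iS$ is $w_i$, and every arc of $\HH$ meeting $\partial_iS$ ends on that block, so is an $a$‑arc with $a\in V(K_i)$. The same reading argument shows $(\HH,\lambda)$ still induces $\phi$ up to conjugation.

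The step I expect to be the only real subtlety is justifying that this last fold genuinely reproduces $S$ (and does not disturb the label‑reading map up to conjugation). The clean way to see it is geometric: the decomposition $\phi(d_i)=w_i'^{-1}w_iw_i'$ with $w_i\in A(K_i)$ is precisely the assertion that, suitably based, the $i$‑th boundary loop of $S$ factors as (a path out to a collar neighborhood of $\partial_iS$)(the circle $\partial_iS$, which $f$ maps into $X_{K_i}$)(the return path), and folding the two path‑pieces together is the inverse of pushing that collar off the surface; a short Euler‑characteristic and orientability check confirms the homeomorphism type is unchanged. The remaining bookkeeping --- compatibility of sub‑edge subdivisions and transverse orientations when arcs are identified across glued blocks, and pinning down the conjugating element --- is routine.
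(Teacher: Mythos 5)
Your proof is correct and follows the same approach as the paper: construct a dual van Kampen diagram for $w=\prod[\phi(x_i),\phi(y_i)]\prod\phi(d_i)$, glue the $\phi(x_i)$-blocks to the $\phi(x_i)^{-1}$-blocks (and likewise for $y_i$) to recover $S$, and for (2) write $\phi(d_i)=w_i'^{-1}w_iw_i'$ with $w_i$ a word over $V(K_i)^{\pm1}$ and additionally fold the $w_i'$-block onto the $w_i'^{-1}$-block so that $\partial_i S$ becomes the $w_i$-block. You spell out the homeomorphism check for the extra fold in a bit more detail than the paper's sketch, but the construction is the one given just before the proposition.
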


From now on, whenever we are given with a relative embedding $\phi:\pi_1(S)\rightarrow A(\Gamma)$
with respect to a label-reading pair $(\HH,\lambda)$, we will implicitly assume that $(\HH,\lambda)$
satisfies the property described in Proposition~\ref{prop:lrexists} (2).

\enumia
\begin{notation}\label{notation:hom}
Let $S$ be a compact surface.
\be
\item  If $\alpha$ and $\beta$ are closed curves, $\alpha\sim\beta$ means 
$\alpha$ and $\beta$ are freely homotopic.
\item
If $\alpha$ and $\beta$ are properly embedded arcs, $\alpha\sim\beta$ means
$\alpha$ and $\beta$ are homotopic, by a homotopy leaving the endpoints of $\alpha$ and $\beta$
on the boundary of $S$ (but not requiring the endpoints to be fixed).
\item
Let $A\subseteq S$.
We write $\alpha\leadsto A$ and say $\alpha$ is {\em homotopic into $A$},
if $\alpha\sim\beta$ for some  $\beta\subseteq A$. In particular, if $A=\partial S$, we say that $\alpha$ is homotopic into the boundary
of $S$.
\item
$i(\alpha,\beta) = \min\{\alpha'\cap\beta'\;|\; \alpha\sim\alpha'\mbox{ and }\beta\sim\beta'\}$.
\ee
\end{notation}

We say two maps $\phi,\psi\co\pi_1(S)\rightarrow A(\Gamma)$ are {\em equivalent} if $\phi=i\circ\psi$ for some inner-automorphism
$i\co A(\Gamma)\rightarrow A(\Gamma)$. Note that for a fixed label-reading pair, a base change does not alter the equivalence class of the corresponding label-reading map $\pi_1(S)\rightarrow A(\Gamma)$. We also say two label-reading pairs are {\em equivalent} if they induce equivalent label-reading maps.
 There are certain simplifications on $(\HH,\lambda)$ that do not change the equivalence class (see~\cite{CW2004} for details and proofs). 

\begin{lem}[\cite{CW2004}]\label{lem:equiv}
Let $(\HH,\lambda)$ and $(\HH',\lambda')$ be label-reading pairs on $S$ with the underlying graph $\Gamma$. The label-reading maps induced by
$(\HH,\lambda)$ and $(\HH',\lambda')$ are equivalent, if
any of the following is satisfied.
\be
\item
$\HH'$ is obtained by removing null-homotopic curves in $\HH$.
\item
$\HH'$ is obtained by removing $\alpha\in\HH$, for some $\alpha\leadsto\partial S$.
\item
Suppose  $\alpha,\beta\in\HH$ intersect at $p$ and have the same label $a$.
Alter  $\alpha$ and $\beta$ on a neighborhood $D$ of $p$; we can get $\alpha'$ and $\beta'$
which are labeled by $a$ and do not intersect in $D$.
Transverse orientations of $\alpha'$ and $\beta'$ are determined by those of $\alpha$ and 
$\beta$.
$(\HH',\lambda')$ is the label-reading pair thus obtained  (Figure~\ref{fig:reduction}\;(a)).
\item
Suppose  $\alpha,\beta\in\HH$ bound a bigon.
Alter  $\alpha$ and $\beta$ on a neighborhood of the bigon;
we can get $\alpha'$ and $\beta'$
which do not intersect
in that neighborhood. The labels of $\alpha'$ and $\beta'$ are equal to those of $\alpha$ and $\beta$, respectively.
$(\HH',\lambda')$ is the label-reading pair thus obtained  (Figure~\ref{fig:reduction}\;(b)).\qed
\ee
\end{lem}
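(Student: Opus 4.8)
The statement to prove is Lemma~\ref{lem:equiv}, which asserts that four specific modifications of a label-reading pair leave the equivalence class of the induced label-reading map unchanged. The plan is to treat each of the four operations separately, in each case exhibiting an explicit (free) homotopy of an arbitrary based loop $\gamma$ transverse to $\HH$ that converts the crossing pattern with $\HH$ into the crossing pattern with $\HH'$, thereby showing that the label-readings agree up to conjugation. Since a base change only alters the label-reading map by an inner automorphism, and since all the alterations are supported in a disk or a small neighborhood, it suffices to track how the word $w_\gamma$ changes under each move.

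For (1), if $\alpha\in\HH$ is null-homotopic, then any loop $\gamma$ can be homotoped across the disk bounded by $\alpha$; each such homotopy removes a cancelling pair $\lambda(\alpha)\lambda(\alpha)^{-1}$ (or its inverse) from $w_\gamma$, so the element of $A(\Gamma)$ represented is unchanged. For (2), if $\alpha\leadsto\partial S$, push $\alpha$ into the boundary; a loop $\gamma$ in the interior can then be homotoped off of $\alpha$ entirely, again removing only cancelling pairs, so no element of $\pi_1(S)$ changes its image. For (4), when $\alpha,\beta$ bound a bigon, the alteration is a standard bigon removal: any loop $\gamma$ passing through the bigon region picks up a subword $\lambda(\alpha)^{\pm1}\lambda(\beta)^{\pm1}$ that, after the move, is read as $\lambda(\beta)^{\pm1}\lambda(\alpha)^{\pm1}$ with opposite signs on the relevant crossings; matching the transverse orientations on the two strands of the bigon shows this is exactly a free reduction (if the labels coincide) or a commutation relation $[\lambda(\alpha),\lambda(\beta)]=1$ (if they are adjacent in $\Gamma$, which is forced because $\alpha\cap\beta\ne\varnothing$), hence represents the same group element. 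For (3), the $a$--curves $\alpha,\beta$ are resmoothed at $p$ inside a neighborhood $D$; a loop crossing $D$ reads $a^{\pm1}a^{\pm1}$ before and after, and one checks by a picture-level case analysis on the transverse orientations that the multiset of signs on the two new strands $\alpha',\beta'$ is inherited so that the contribution to $w_\gamma$ is unchanged in the free group on $a$ (the only subtlety being the four sign cases of Figure~\ref{fig:reduction}\;(a)).

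The main obstacle I expect is purely bookkeeping rather than conceptual: in moves (3) and (4) one must verify that the prescribed transverse orientations of the new curves $\alpha',\beta'$ are compatible with the claimed rewriting of $w_\gamma$ for \emph{every} loop $\gamma$, including loops that enter and leave the modification region multiple times, and loops whose basepoint-to-$\gamma$ connecting path also crosses the region. The clean way to handle this is to observe that the modifications are local (supported in a disk $D$ disjoint from the basepoint), so it is enough to compare the two dissections restricted to $D$ as abstract labeled, transversely oriented $1$--manifolds-with-boundary in a disk, together with the induced partial labelings on $\partial D$; since $\partial D$ is unchanged setwise and the two pictures inside $D$ are related by an ambient isotopy of $D$ rel $\partial D$ composed with the homotopies above, the label-readings of all arcs crossing $D$ agree as elements of $A(\Gamma)$, and one concludes. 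All of this is carried out in detail in~\cite{CW2004}; here I would simply indicate these reductions and refer there for the routine verifications, noting the only genuine change from \emph{loc.\ cit.}\ is that same-label curves are now permitted to cross, which is precisely what move (3) is designed to simplify and which does not affect any of the arguments.
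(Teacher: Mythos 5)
Your proposal is correct and takes essentially the same approach as the paper: the lemma is stated in the paper without proof and attributed to~\cite{CW2004}, and your brief sketch of the four local moves followed by a deferral to~\cite{CW2004} for the routine verifications is the same strategy. Your closing observation --- that the only departure from~\cite{CW2004} is allowing same-label curves to cross, which is precisely what move~(3) is designed to eliminate --- matches the paper's own remark that this relaxation leaves the arguments of~\cite{CW2004} intact.
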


\begin{figure}[htb!]
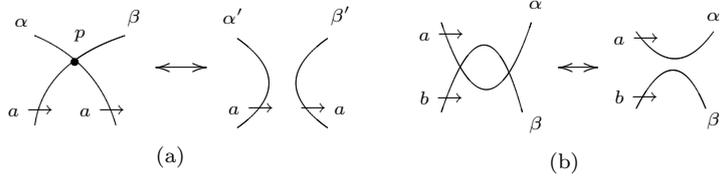

\centering
\subfigure[]{
$$
\xy 
(0,0)*++
{\xy 
0;/r.14pc/:
(10,10)*{}; (-10,-10)*{} **\crv{(-8,4)};
(-10,10)*{}; (8,-10)*{} **\crv{(4,4)};
(-11,-7)*{{\scriptstyle a}\rightarrow};
(5,-7)*{  {\scriptstyle a}\rightarrow};
(0,10)*{{\scriptstyle p}};
(-1,4)*{{}_\bullet};
(12,14)*{ {\scriptstyle  \beta} };
(-13,13)*{{\scriptstyle  \alpha}};
\endxy  }="x";
(28,0)*++
{\xy
0;/r.14pc/:
(10,10)*{}; (10,-10)*{} **\crv{(-4,0)};
(-10,-10)*{}; (-10,10)*{} **\crv{(4,0)};
(-7,-6)*{  {\scriptstyle a}\rightarrow};
(9,-6)*{  \rightarrow  {\scriptstyle a}};
(13,15)*{   {\scriptstyle \beta'} };
(-11,15)*{ {\scriptstyle \alpha'}};
\endxy  }="y";
{\ar@{<->} "x";"y"}; 
\endxy
$$
}
\subfigure[]{
$$
\xy
(0,0)*++
{\xy
0;/r.14pc/:
(10,10)*{}; (-10,10)*{} **\crv{(0,-20)};
(-10,-10)*{}; (8,-10)*{} **\crv{(0,20)};
(-10,-7)*{  {\scriptstyle b}\rightarrow};
(-10,7)*{  {\scriptstyle a}\rightarrow};
(11,14)*{ {\scriptstyle  \alpha}};
(11,-13)*{  {\scriptstyle \beta}};
\endxy  }="z"; 
(25,0)*++
{\xy 
0;/r.12pc/:
(10,10)*{}; (-10,10)*{} **\crv{(0,-4)}; 
(-10,-10)*{}; (8,-10)*{} **\crv{(0,10)}; 
(-10,-7)*{  {\scriptstyle b}\rightarrow};
(-10,8)*{  {\scriptstyle a}\rightarrow};
(11,15)*{  {\scriptstyle \alpha'}};
(11,-13)*{{\scriptstyle   \beta'}};
\endxy  }="w"; 
{\ar@{<->} "z";"w"};
\endxy 
$$
}
\caption[Homotopies on a label-reading pair]
{Homotopies that do not
change the equivalence class
of a label-reading pair.
Note that 
(b) is allowed only when $a=\lambda(\alpha)$ and $b=\lambda(\beta)$ are equal or adjacent in $\Gamma$.
\label{fig:reduction}}
\end{figure}
\begin{rem}\label{rem:reduction}
Let  $(\HH,\lambda)$ be  a label-reading pair on a compact surface $S$, inducing a label-reading map $\phi\co\pi_1(S)\rightarrow A(\Gamma)$.
\be
\item
By Lemma~\ref{lem:equiv}, we will always assume that curves and arcs in $\HH$ are neither null-homotopic
nor homotopic into the boundary. Moreover, curves and arcs in $\HH$ 
will be assumed to be minimally intersecting~\cite{CB1988}.
Curves and arcs of the same label are assumed to be disjoint, unless stated otherwise.
\item
Let $\gamma$ be a curve or an arc on $S$, such that 
the endpoints (meaning the base point if $\gamma$ is a loop) are not on $\HH$.
If $\gamma$ is not transverse to $\HH$, we set $w_\gamma=w_{\gamma'}$ for some $\gamma'\sim\gamma$
such that $\gamma'$ has the same endpoints as $\gamma$, and 
$\gamma'$ is transverse to $\HH$. This definition of $w_\gamma \in A(\Gamma)$ does not depend on the choice of $\gamma'$.
\ee
\end{rem}

\section[$\NNN'$ is closed under $K_n$-amalgamation]{$\NNN'$ is closed under complete graph amalgamation}

Recall that $\NNN'$ denotes the class of graphs, the right-angled Artin groups on which do not allow
relative embeddings of compact hyperbolic surface groups (Definition~\ref{defn:relative}). 
Let $\Gamma$ be a graph and $S$ be a compact hyperbolic surface. 
Recall that $x\in\pi_1(S)$ is called {\em peripheral} if $x=[\alpha]$ for some $\alpha$ homotopic into $\partial S$.
We note that 
an embedding $\phi\co \pi_1(S)\rightarrow A(\Gamma)$ 
is a relative embedding if, for each peripheral $x\in \pi_1(\Gamma)$, 
there exists a complete subgraph $K\le\Gamma$ such that $\phi(x)$ is conjugate into $A(K)$.
In this section, we examine basic combinatorial properties
of $\NNN'$, and prove that $\NNN'$ is closed under complete graph amalgamation. 
Roughly speaking, a  key idea for the proof is that
 commutativity is  scarce in surface groups. The following is immediate from the fact that
 any 2-generator subgroup of a compact hyperbolic surface group
 is free.
 
\begin{lem}\label{lem:primitive}
Let $S$ be a compact hyperbolic surface, and  $x$ and $y$ be commuting elements of $\pi_1(S)$.
Then there exists $c\in\pi_1(S)$ such that
$x,y\in\langle c\rangle$. If $x$ and $y$ are further assumed to be represented by essential simple closed curves,
then either $x=y$ or $x=y^{-1}$.\qed
\end{lem}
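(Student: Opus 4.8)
The plan is to reduce everything to the standard fact that the fundamental group of a compact hyperbolic surface (with or without boundary) has a free abelian group structure that is quite restricted, and in fact every two-generator subgroup is free. Since a surface group is torsion-free and the free group on two generators has trivial centre, a two-generator subgroup $\langle x, y\rangle$ of $\pi_1(S)$ can only be abelian if it is cyclic; indeed a free group is abelian only when it has rank at most $1$. I would first dispose of trivial cases: if $x = 1$ or $y = 1$ there is nothing to prove, so assume both are nontrivial.

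Next I would invoke the hypothesis that $\langle x, y\rangle$ is free, so being abelian forces it to be infinite cyclic, say $\langle x, y\rangle = \langle c\rangle$ for some $c \in \pi_1(S)$; this immediately gives the first assertion $x, y \in \langle c\rangle$. For the second assertion, suppose in addition that $x$ and $y$ are represented by essential simple closed curves. Write $x = c^m$ and $y = c^n$ with $m, n \in \Z \setminus \{0\}$. The point is that an essential simple closed curve represents a \emph{primitive} element of $\pi_1(S)$ up to the usual indeterminacy: more precisely, if $c^k$ is represented by a simple closed curve on a surface then $|k| = 1$, because a simple closed curve that is a proper power would, after passing to the cover or by the bigon/innermost-disc argument, force a nontrivial power of a curve to be simple, which is impossible on a surface (the curve $c$ and its translate would have to coincide or be disjoint, and a simple representative of $c^k$ together with a simple representative of $c$ bound an immersed annulus forcing $k = \pm 1$). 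Applying this to $x = c^m$ gives $m = \pm 1$, and similarly $n = \pm 1$, so $\{x, y\} \subseteq \{c, c^{-1}\}$, whence $x = y$ or $x = y^{-1}$.

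The main obstacle is making precise the claim that an essential simple closed curve represents a primitive element, i.e. is not a proper power in $\pi_1(S)$. This is a classical fact (see, e.g., standard references on mapping class groups / curves on surfaces, or it can be proved by lifting to the $\Z$-cover corresponding to $\langle c\rangle$ and observing the simple curve lifts to a simple, hence non-proper-power, curve, or via the observation that the centralizer of a simple closed curve is the cyclic group it generates together with a boundary/annulus argument). I would cite this rather than reprove it, since the paper already cites \cite{CB1988} for curve-on-surface facts. With that in hand, the proof is a two-line deduction, which is why the authors mark it $\qed$ inline in the statement.
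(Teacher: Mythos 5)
Your proposal is correct and follows exactly the route the paper indicates: the paper marks the lemma $\qed$ and prefaces it with the remark that it ``is immediate from the fact that any 2-generator subgroup of a compact hyperbolic surface group is free,'' and you deduce the first assertion from freeness of $\langle x,y\rangle$ plus the fact that an abelian free group is cyclic, and the second from primitivity of essential simple closed curves. One small slip: you call the freeness of $\langle x,y\rangle$ a ``hypothesis,'' but as you note elsewhere it is a theorem about (compact hyperbolic) surface groups, not an assumption.
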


Let $\Gamma_1$ and $\Gamma_2$ be graphs.
The disjoint union of $\Gamma_1$ and $\Gamma_2$ is
denoted by $\Gamma_1\sqcup\Gamma_2$. 
We define $\join(\Gamma_1,\Gamma_2)$ to be the graph obtained by
taking the disjoint union of $\Gamma_1$ and $\Gamma_2$
and adding the edges in 
$\{\{v_1,v_2\} \; | \; v_1\in V(\Gamma_1),v_2\in V(\Gamma_2)\}$.
This means,
\[\join(\Gamma_1,\Gamma_2) = \overline{\overline{\Gamma_1}\sqcup\overline{\Gamma_2}}.\]

\begin{prop} \label{prop:join}
If $\Gamma_1,\Gamma_2\in\NNN'$, then
$\join(\Gamma_1,\Gamma_2)\in\NNN'$.
\end{prop}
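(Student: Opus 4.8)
The plan is to argue by contradiction: suppose $\Gamma = \join(\Gamma_1,\Gamma_2)$ admits a relative embedding $\phi\co\pi_1(S)\rightarrow A(\Gamma)$ for some compact hyperbolic surface $S$, realized by a normalized label-reading pair $(\HH,\lambda)$ as in Proposition~\ref{prop:lrexists}. Every vertex of $\Gamma_1$ is adjacent in $\Gamma$ to every vertex of $\Gamma_2$, so the $\Gamma_1$--labeled curves and arcs in $\HH$ and the $\Gamma_2$--labeled ones are a priori allowed to cross freely, and $A(\Gamma) = A(\Gamma_1)\times A(\Gamma_2)$. Write $\phi = (\phi_1,\phi_2)$ with $\phi_i\co\pi_1(S)\rightarrow A(\Gamma_i)$ induced by the sub-dissection $(\HH_i,\lambda|_{\HH_i})$, where $\HH_i$ consists of the curves and arcs of $\HH$ with label in $V(\Gamma_i)$. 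Since $\phi$ is injective and $\pi_1(S)$ is a non-abelian free-by-cyclic (indeed one-ended hyperbolic, if $S$ is closed, or free) group that does not split as a nontrivial direct product, at least one of $\phi_1,\phi_2$ — say $\phi_1$ — must be injective.

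The next step is to check that $\phi_1$ is a \emph{relative} embedding into $A(\Gamma_1)$. For a peripheral $x\in\pi_1(S)$ represented by $\alpha\leadsto\partial S$, the hypothesis on $\phi$ gives a complete subgraph $K\le\Gamma$ with $\phi(x)$ conjugate into $A(K)$. Now $K$ splits as a join $K = K_1 * K_2$ with $K_i = K\cap\Gamma_i$ complete in $\Gamma_i$ (a complete subgraph of a join of $\Gamma_1,\Gamma_2$ meets each $\Gamma_i$ in a complete subgraph), so $A(K) = A(K_1)\times A(K_2)$ and the first coordinate of $\phi(x)$, namely $\phi_1(x)$, is conjugate into $A(K_1)$. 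Hence $\phi_1$ is a relative embedding of $\pi_1(S)$ into $A(\Gamma_1)$. But $\Gamma_1\in\NNN'$, a contradiction. Therefore $\join(\Gamma_1,\Gamma_2)\in\NNN'$.

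The one point requiring care — and the place I expect the only real friction — is the claim that injectivity of $\phi=(\phi_1,\phi_2)$ forces injectivity of one factor. The clean way is to invoke the structure of the kernel: $\ker\phi_1 \cap \ker\phi_2 = \ker\phi = 1$, so $\pi_1(S)$ embeds in $(\pi_1(S)/\ker\phi_1)\times(\pi_1(S)/\ker\phi_2)$. If neither $\phi_i$ is injective, then $\ker\phi_1,\ker\phi_2$ are nontrivial normal subgroups of $\pi_1(S)$ intersecting trivially, hence they commute, so each is central — but a compact hyperbolic surface group has trivial center. This is exactly the "commutativity is scarce in surface groups" slogan, and it is the analogue, at the level of $\pi_1(S)$ rather than of curves on $S$, of Lemma~\ref{lem:primitive}. (When $S$ has boundary, $\pi_1(S)$ is free of rank $\ge 2$, and the same centerless argument applies.) With that lemma in hand, the rest is the bookkeeping above: splitting $A(\Gamma)$ and each complete $K$ along the join, and reading off that the peripheral condition descends to the surviving factor.
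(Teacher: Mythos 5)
Your overall strategy matches the paper's: write $A(\join(\Gamma_1,\Gamma_2)) = A(\Gamma_1)\times A(\Gamma_2)$, show that one of the two projections $p_i\circ\phi$ is injective, and then observe that deleting the $V(\Gamma_2)$--labeled (resp.\ $V(\Gamma_1)$--labeled) curves and arcs from the label-reading pair realizes the injective projection as a relative embedding into $A(\Gamma_i)$, contradicting $\Gamma_i\in\NNN'$. Your explicit verification that the peripheral condition descends (a complete subgraph $K$ of the join splits as $K = \join(K_1,K_2)$ with $K_i\le\Gamma_i$ complete, so $p_1(\phi(x))$ is conjugate into $A(K_1)$) is a useful remark that the paper leaves implicit.

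There is, however, a genuine logical slip in your ``one projection is injective'' step. You argue: $\ker\phi_1$, $\ker\phi_2$ are nontrivial normal subgroups of $\pi_1(S)$ that intersect trivially, \emph{hence they commute, so each is central}. The inference ``commute $\Rightarrow$ each is central'' is false. From $\ker\phi_1\cap\ker\phi_2=1$ and normality you do get $[\ker\phi_1,\ker\phi_2]\subseteq\ker\phi_1\cap\ker\phi_2=1$, i.e.\ the two subgroups commute \emph{with each other}; but this says nothing about commuting with the rest of $\pi_1(S)$. (Take $G=H\times H$ for any non-abelian $H$: the two factors are normal, commute elementwise, intersect trivially, and neither is central.) What the commuting actually buys you is that $\langle\ker\phi_1,\ker\phi_2\rangle\cong\ker\phi_1\times\ker\phi_2$ embeds in $\pi_1(S)$; picking $1\neq a_i\in\ker\phi_i$ then gives $\Z^2\cong\langle a_1\rangle\times\langle a_2\rangle\le\pi_1(S)$, which is impossible in a compact hyperbolic surface group (or a nonabelian free group). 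This is precisely the content of Lemma~\ref{lem:primitive}, and it is what the paper invokes: it takes $a_i\in\ker(p_i\circ\phi)\setminus\{1\}$, notes $\phi(a_1)=(1,b_2)$ and $\phi(a_2)=(b_1,1)$ commute so $[a_1,a_2]=1$ by injectivity of $\phi$, applies Lemma~\ref{lem:primitive} to get $a_1,a_2\in\langle c\rangle$, and then derives the contradiction $\Z^2\cong\langle\phi(a_1),\phi(a_2)\rangle\subseteq\langle\phi(c)\rangle\cong\Z$. So your proof needs ``so each is central --- but a surface group has trivial center'' replaced by this $\Z^2$-in-a-surface-group contradiction; with that repair, the argument is correct and essentially identical to the paper's.
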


\begin{proof}
Suppose $\Gamma = \join(\Gamma_1,\Gamma_2)\not\in\NNN'$.
One can find a relative embedding
$\phi\co \pi_1(S)\rightarrow A(\Gamma)\cong A(\Gamma_1)\times A(\Gamma_2)$ for some hyperbolic surface $S$.
Let  $(\HH,\lambda)$ be a label-reading pair inducing $\phi$, and
 $p_i:A(\Gamma)\rightarrow A(\Gamma_i)$ be the projection map.

We claim that $p_1\circ\phi$ or $p_2\circ\phi$ is injective.
Suppose not, and choose 
$1\ne a_1\in\ker (p_1\circ\phi)$ and $1\ne a_2\in\ker(p_2\circ
\phi)$. Write $\phi(a_1)=(1,b_2)$ and $\phi(a_2)=(b_1,1)$ for some non-trivial 
$b_i\in A(\Gamma_i)$, $i=1,2$.
$\phi[a_1,a_2]=[\phi(a_1),\phi(a_2)] = [(1,b_2),(b_1,1)]=1$.
Since $S$ is hyperbolic and $\phi$ is an embedding, $a_1,a_2\in\langle c\rangle$ for some $c\in\pi_1(S)$ 
(Lemma~\ref{lem:primitive}).
Hence $\langle  \phi(a_1),\phi(a_2)\rangle \subseteq \langle  \phi(c)\rangle\cong\Z$,
which contradicts to
$\Z\times\Z \cong \langle  (1,b_2),(b_1,1)\rangle = \langle  \phi(a_1),\phi(a_2)\rangle $

Without loss of generality, we may assume that
$p_1\circ\phi$ is injective.
The label-reading map $\pi_1(S)\rightarrow A(\Gamma_1)$
obtained by removing curves and arcs in $\HH$ labeled by $V(\Gamma_2)$
is injective. So $\Gamma_1\not\in\NNN'$. \end{proof}

 Since $K_1\in\NNN'$, it follows from Proposition~\ref{prop:join} that $K_n\in\NNN'$ for any $n$. 
This can also be seen by noting that $A(K_n)\cong \Z^n$ does not contain a closed hyperbolic surface group
 or a non-abelian free group. 
 
 For the rest of this section, we will prove that $\NNN'$ is closed under complete graph amalgamation.
For a graph $\Gamma$, the set of all vertices adjacent to $a\in V(\Gamma)$ will be denoted by $\link(a)$.

\begin{defn}[\cite{golumbic2004}] \label{def:simplicialv}
A vertex $a$ of a graph $\Gamma$ is called {\em simplicial}
if $\link(a)$ induces a complete subgraph of $\Gamma$.
\end{defn}

A set of pairwise non-adjacent vertices in a graph is said to be {\em independent}.

\begin{lem}\label{lem:simplicialv}
Let $\Gamma$ and $\Gamma'$  be graphs such that
$\Gamma'$
is obtained by removing a set of independent  simplicial vertices in $\Gamma$.
 If $\Gamma'\in\NNN'$, then $\Gamma\in\NNN'$.
 \end{lem}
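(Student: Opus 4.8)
\emph{Step 1: reduce to one simplicial vertex.} Write $W=\{w_1,\dots,w_k\}$ for the independent set of simplicial vertices, so $\Gamma'=\Gamma\setminus W$; I would induct on $k$. If $k\ge 1$, delete $w_1$ first. Since $w_1$ is adjacent to none of $w_2,\dots,w_k$, the links of $w_2,\dots,w_k$ are unchanged, so $\{w_2,\dots,w_k\}$ is still an independent set of simplicial vertices of $\Gamma\setminus\{w_1\}$, and $(\Gamma\setminus\{w_1\})\setminus\{w_2,\dots,w_k\}=\Gamma'$; hence $\Gamma\setminus\{w_1\}\in\NNN'$ by the inductive hypothesis, and it remains to handle $k=1$. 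With $w$ a single simplicial vertex, $\Delta=\link(w)$ is complete, $L=\{w\}\cup\Delta$ is complete, $\Gamma$ is the complete graph amalgamation of $\Gamma'$ and $L$ along $\Delta$, and $A(\Gamma)=A(\Gamma')*_{A(\Delta)}A(L)$ with $A(L)=A(\Delta)\times\langle w\rangle$ abelian.

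\emph{Step 2: transversality.} Suppose for contradiction $\Gamma\notin\NNN'$, and fix a relative embedding realised by a $\pi_1$--injective $f\co S\to X_\Gamma$ (so $\phi:=f_*$ is an embedding), where $S$ is compact hyperbolic and $f(\partial_i S)\subseteq X_{K_i}$ for complete $K_i\le\Gamma$. View $X_\Gamma$ as a graph of spaces with vertex spaces $X_{\Gamma'}$, $X_L$ and edge space $X_\Delta$, with $Y$ the separating copy of $X_\Delta$. Each $K_i$ lies on one side of $Y$: if $w\in K_i$ then every other vertex of $K_i$ is adjacent to $w$, so $K_i\subseteq L$; if $w\notin K_i$ then $K_i\subseteq\Gamma'$. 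So after homotoping $f$ (which changes the relative embedding only up to equivalence) we may assume $f(\partial S)\cap Y=\varnothing$, and then, making $f$ transverse to $Y$, the preimage $\mathcal C=f^{-1}(Y)$ is a disjoint union of simple closed curves --- no arcs occur. Using that $\pi_1(Y)=A(\Delta)$ injects into $A(\Gamma)$, a component of $\mathcal C$ bounding a disc in $S$ is null-homotopic in $Y$ and may be compressed away, and components isotopic to $\partial S$ or to each other may be discarded; so $\mathcal C$ may be taken essential with no two components isotopic. Cutting $S$ along $\mathcal C$ yields compact, $\pi_1$--injective pieces, each mapped by $f$ (up to homotopy into a conjugate) into $X_{\Gamma'}$ or into $X_L$, with each boundary curve of a piece mapped into a complete torus $X_K$ --- $K=\Delta$ for a curve of $\mathcal C$, $K=K_i$ for a curve of $\partial S$.

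\emph{Step 3: extract a subsurface over $\Gamma'$ and conclude.} A piece mapped into $X_L$ has fundamental group embedding, through $\pi_1(\text{piece})\hookrightarrow\pi_1(S)\xrightarrow{\phi}A(\Gamma)$, into the abelian group $A(L)$; as a compact oriented surface it is then a disc, an annulus, or a torus, and a disc is impossible (essentiality of $\mathcal C$), a torus would force $S$ to be a non-hyperbolic torus, and an annulus is absorbed by dropping one of its two isotopic boundary curves from $\mathcal C$. Since $\chi(S)<0$ not every piece is an annulus, so after these reductions some piece $S_0$ survives (with $S_0=S$ if $\mathcal C$ has become empty); $S_0$ is a compact hyperbolic surface mapped into $X_{\Gamma'}$, and its boundary curves map into complete tori $X_K$ with $K\le\Gamma'$ (namely $\Delta$, or a $K_i$ with $w\notin K_i$). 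Postcomposing $\phi$ with the retraction $r\co A(\Gamma)\to A(\Gamma')$ that fixes $V(\Gamma')$ and kills $w$: since $\phi(\pi_1 S_0)$ is conjugate into $A(\Gamma')=\operatorname{im}r$, the map $r\circ\phi|_{\pi_1 S_0}$ is injective; and $r$ sends a conjugate of $A(K_i)$ (resp.\ $A(\Delta)$) to a conjugate of $A(K_i\cap\Gamma')$ (resp.\ $A(\Delta)$), both complete subgraphs of $\Gamma'$. Hence $r\circ\phi|_{\pi_1 S_0}$, realised by the map $S_0\to X_{\Gamma'}$ homotopic to $f|_{S_0}$, is a relative embedding of a compact hyperbolic surface group into $A(\Gamma')$, contradicting $\Gamma'\in\NNN'$.

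\emph{Main obstacle.} The step needing the most care is Step 2: making $f$ transverse to $Y$ and arranging $\mathcal C$ to be an essential multicurve with $\pi_1$--injective complementary pieces, keeping $f(\partial S)$ off $Y$ throughout. This is the transversality argument alluded to in the introduction, and it is simpler here than in the general complete graph amalgamation theorem because one factor $A(L)$ of the splitting is abelian, which automatically forces every ``$X_L$-side'' piece to be inessential. (Alternatively, the lemma is the special case of that theorem in which one amalgamation factor is a complete graph.)
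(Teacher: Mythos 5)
Your proposal is correct, but it takes a genuinely different route from the paper. The paper stays entirely inside the label-reading formalism: given a label-reading pair $(\HH,\lambda)$ realizing the relative embedding, it isolates $\HH_a=\lambda^{-1}(a)$ (the curves and arcs labeled by the simplicial vertex $a$, which in the dual picture are the preimage of the $a$-hyperplane of $X_\Gamma$, not of the edge space $Y$), and splits into two cases. If $\HH_a$ consists only of simple closed curves, cutting $S$ along $\HH_a$ produces a hyperbolic component $S'$ carrying a label-reading pair with no $a$-labels, whose cut-boundary is labeled by $\link(a)$; this gives a relative embedding into $A(\Gamma')$ directly. If $\HH_a$ contains a properly embedded arc $\alpha$ joining $\partial_1 S$ to $\partial_2 S$, the paper derives a contradiction by showing that $[\partial_1 S]$ and $[\alpha\cdot\partial_2 S\cdot\alpha^{-1}]$ have commuting $\phi$-images (both lie in the abelian group $\langle\link(a)\cup\{a\}\rangle$), contradicting Lemma~\ref{lem:primitive}. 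Your argument instead invokes the graph-of-spaces decomposition $X_\Gamma=X_{\Gamma'}\cup_{X_\Delta}X_L$, makes $f$ transverse to the edge space, and uses that $A(L)$ is abelian to show every $L$-side piece is a compressible disc or an absorbable annulus, leaving a hyperbolic $\Gamma'$-side piece. The two approaches are close cousins (the paper's $a$-curves are essentially parallel copies of your separating $Y$ inside $X_L$), but the paper never appeals to Bass-Serre theory or to transversality to a subcomplex of a cube complex, which requires some care to set up rigorously, at the cost of having to treat the arc case by hand. The abelianness of $A(L)$ is doing the same work in both proofs, surfacing in the paper as the commutativity computation $[w_{\delta_1},w_{\delta_2}]=1$.

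Two small remarks. First, the step in which an annular $L$-piece with one boundary curve on $\partial S$ is ``discarded'' deserves a sentence: one must homotope $f$ on that collar into $Y$ (possible since its $\pi_1$-image lies in $A(\Delta)$ by abelianness of $A(L)$), after which the boundary condition on that component of $\partial S$ is satisfied with $\Delta$ in place of $K_i$. Second, your closing parenthetical that the lemma is a special case of Theorem~3.12 is a correct observation but cannot serve as an alternative proof: the paper's proof of Theorem~3.12 relies on this lemma (via Lemmas~\ref{lem:simplicial_extension} and~\ref{lem:promotion}), so that derivation would be circular.
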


\begin{proof}
Let
$\Gamma'$ be the induced subgraph of $\Gamma$ on
$V(\Gamma)\setminus\{a_1,\ldots,a_r\}$,
where
 $a_1,\ldots,a_r$ are independent simplicial vertices of $\Gamma$.
Suppose $\Gamma\not\in\NNN'$. 

First, consider the case when $r=1$. Let $a=a_1$.
One can find a compact hyperbolic surface $S$ and 
a relative embedding    $\phi\co \pi_1(S)\rightarrow A(\Gamma)$ induced by
a label-reading pair $(\HH,\lambda)$. Put $\HH_a = \lambda^{-1}(a)$.

{\em Case 1. $\HH_a$ consists of simple closed curves only.}

Choose a connected component $S'$ of $S\setminus(\cup\HH_a)$, so that 
$S'$ is hyperbolic. The curves and arcs in the set $(\cup\HH)\cap S'$ naturally
inherit transverse orientations and labels from those of $(\HH,\lambda)$,
and so determine a label-reading pair $(\HH',\lambda')$ inducing
$\phi':\pi_1(S')\rightarrow A(\Gamma')$.
$\phi'$ is injective, since $\phi'$ is a restriction of $\phi$ up to equivalence.

A simple closed curve in $\HH_a$ intersects with a curve in $\HH$ labeled by
a vertex in $\link(a)$. 
Each  boundary component $\partial_i S'$
of $S'$ either is 
a boundary component of $S$, or comes from a curve in $\HH_a$.
In the latter case, any curve in $\HH'$ intersecting with 
$\partial_i S'$ must be labeled
by  a  vertex in   $\link(a)$. Since $\link(a)$ induces a complete graph in $\Gamma'$, 
$\phi'$ is a relative embedding. This implies
 $\Gamma'\not\in\NNN'$.

{\em Case 2. $\HH_a$ contains a  properly embedded arc $\alpha$.}

Suppose $\alpha$ joins the boundary components $\partial_1 S$ and $\partial_2 S$.
Here, $\alpha$ is an essential arc, but it is possible that $\partial_1 S = \partial_2 S$.
Since $\phi$ is a relative embedding, any curve or arc in $\HH$ intersecting with $\partial_1 S$  or 
$\partial_2 S$ is labeled by a vertex in $\link(a)\cup\{a\}$.
 From the definition of a label-reading pair, the label-reading of  $\alpha$ is in $\langle  \link(a)\rangle$. 
Choose $\delta_1\sim \partial_1 S$
and $\delta_2 \sim \partial_2 S$, 
such that $\delta_1$ and $\delta_2$ have the same basepoint, and transversely intersect $\HH$.
Moreover, we assume that  $\delta_1$ and $\delta_2$ are sufficiently close to $\partial_1 S$ and $\alpha\cdot\partial_2 S\cdot\alpha^{-1}$
respectively,  so that $w_{\delta_1}$ and $w_{\delta_2}$ are in $\langle  \link(a)\cup\{a\}\rangle$
(Figure~\ref{fig:lem:simplicialv}).
Since $a$ is simplicial, $\langle  \link(a)\cup\{a\}\rangle$ is free abelian,
and 
$\phi([[\delta_1],[\delta_2]]) = [w_{\delta_1},w_{\delta_2}] = 1$.
The injectivity of $\phi$ implies
$[[\delta_1],[\delta_2]]=1$, which  is impossible unless
$\delta_1\sim \delta_2^{\pm1}$ and $S$ is an annulus (Lemma~\ref{lem:primitive}).

In the case when $r>1$,
note that $a_r$ is a simplicial vertex of the induced subgraph on 
 $V(\Gamma)\setminus\{a_1,a_2,\ldots,a_{r-1}\}$.
An inductive argument shows that $\Gamma'\not\in\NNN'$. \end{proof}

 \begin{figure}[htb!] 
 \labellist 
\small\hair 2pt 
\pinlabel {$\scriptstyle\partial_1 S$} [b] at 29 30
\pinlabel {$\scriptstyle\partial_2 S$} [b] at 125 30
\pinlabel {$\scriptstyle\alpha$} [b] at 92 52
\pinlabel {$\uparrow$} [t] at 70 55
\pinlabel {$\scriptstyle a$} [b] at 70 55
\pinlabel {$\scriptstyle\delta_2$} [l] at 157 35
\pinlabel {$\scriptstyle\delta_1$} [r] at 0 36
\pinlabel {$\bullet$} [r] at 57 21
\endlabellist 
\centering 
\includegraphics{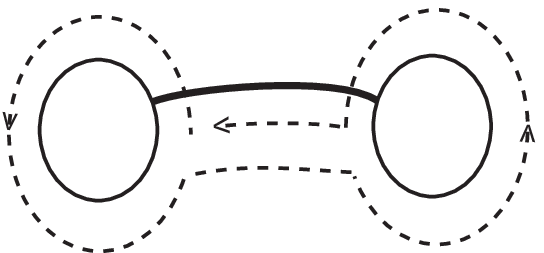}
\caption{Proof of Lemma~\ref{lem:simplicialv}
\label{fig:lem:simplicialv}}
\end{figure}

Using the next two lemmas, 
we will prove a general fact (Lemma~\ref{lem:evinj}) on the fundamental group of a hyperbolic surface with boundary.
For a set $X$, the {\em period} of a finite sequence $f:\{1,2,\ldots,M\}\rightarrow X$ is the {\em smallest} positive number $p$ such that
$f(i) = f(i+p)$ whenever $ i$ and $ i + p$ are in $\{1,2,\ldots, M\}$. The following combinatorial lemma asserts that if two finite sequences 
coincide at consecutive terms the number of which is larger than the sum of the periods, 
then one sequence is a translation of the other.

\begin{lem}\label{lem:period}
Let $X$ be a set, and $M_1,M_2>0$. For $i=1,2$, let $A_i= \{1,2,\ldots,M_i\}$, and $f_i:A_i\rightarrow X$ be a finite sequence with the period $p_i$.
Suppose there exist integers $u$ and $v$ such that
 for each $i = v+1, v+2, \ldots, v+ p_1 + p_2$,
we have $i\in A_1$, $u+i \in A_2$ and $f_1( i ) = f_2( u + i )$.
Then $p_1 = p_2$, and $f_1(i)  = f_2(u+i)$ whenever $i\in A_1$, $u+i \in A_2$.
\end{lem}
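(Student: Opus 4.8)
The plan is to exploit the standard Fine--Wilf-type phenomenon for periodic sequences. First I would establish that $f_1$ restricted to the window $\{v+1,\ldots,v+p_1+p_2\}$ is genuinely periodic with both period $p_1$ (inherited from $f_1$, since all the relevant indices and their $p_1$-shifts lie in $A_1$ by the length hypothesis) and period $p_2$ (via the translation identity $f_1(i)=f_2(u+i)$ together with the $p_2$-periodicity of $f_2$). The window has length $p_1+p_2$, so the Fine--Wilf theorem applies and gives that the restriction is in fact periodic with period $\gcd(p_1,p_2)$. Hence $\gcd(p_1,p_2)$ is a period of a block of $f_1$ of length at least $\max(p_1,p_2)$; I would then argue that this forces $\gcd(p_1,p_2)$ to be a period of \emph{all} of $f_1$, because once a periodicity of length $\ell$ holds on a window of length $\ge\ell$ that is ``anchored'' inside a sequence already known to have period $p_1$ (a multiple of $\ell$), the periodicity propagates: any index of $A_1$ can be moved into the window by subtracting a suitable multiple of $p_1$, applying the $\ell$-periodicity there, and moving back. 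By minimality of the period $p_1$ we conclude $\gcd(p_1,p_2)=p_1$, so $p_1\mid p_2$, and symmetrically $p_2\mid p_1$, giving $p_1=p_2=:p$.

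Next I would promote the translation identity from the window to its full range. We know $f_1(i)=f_2(u+i)$ for $i$ in a block of length $2p$. Given an arbitrary $i_0\in A_1$ with $u+i_0\in A_2$, I would write $i_0 = j_0 + kp$ for some $j_0$ in (a one-period sub-block of) the window and some integer $k$; then $f_1(i_0)=f_1(j_0)$ by $p$-periodicity of $f_1$ (valid provided the intermediate $p$-shifts stay in $A_1$, which I would check by a short monotonicity argument on the indices), $f_1(j_0)=f_2(u+j_0)$ by the window identity, and $f_2(u+j_0)=f_2(u+i_0)$ by $p$-periodicity of $f_2$ (again checking the shifts stay in $A_2$). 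Chaining these equalities gives $f_1(i_0)=f_2(u+i_0)$, as desired.

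The only genuinely delicate point is the bookkeeping for the index-range conditions: the periodicity of $f_i$ as stated only lets you equate $f_i(n)$ with $f_i(n\pm p_i)$ when \emph{both} indices lie in $A_i$, so each time I shift by a period I must verify I have not stepped outside $\{1,\ldots,M_i\}$. I would handle this by always moving ``inward'' toward the window (which sits inside both ranges by hypothesis) in unit steps of one period at a time, so that each intermediate index lies between $i_0$ (or $u+i_0$) and the window and hence in the respective $A_i$; this is elementary but is the part that needs to be written carefully rather than waved through. The Fine--Wilf input itself I would either cite or prove in two lines via the Euclidean-algorithm descent, since on a window of length exactly $p_1+p_2$ the two periods combine to yield $\gcd(p_1,p_2)$ with no off-by-one slack to spare.
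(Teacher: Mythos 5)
Your argument is correct, but it takes a detour through Fine--Wilf that the paper avoids. The paper, after assuming $p_1\le p_2$ without loss of generality, shows \emph{directly} that $p_1$ is a period of $f_2$ by a single chain of five equalities: for any $i,i+p_1\in A_2$, choose $q$ with $u+v+1\le i+p_2q\le u+v+p_2$, slide into the window by the $p_2$-periodicity of $f_2$, cross over to $f_1$ via the window identity, shift by $p_1$ using the $p_1$-periodicity of $f_1$, cross back, and slide out again; minimality of $p_2$ then gives $p_1=p_2$ in one step, with no need for a gcd. Your version instead cites Fine--Wilf to extract $\gcd(p_1,p_2)$ as a period of the window and then propagates that gcd-period to all of $f_1$ and, separately, to all of $f_2$ before invoking minimality twice. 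Both routes are valid, and the propagation step is not hard, but the paper's is leaner because it never needs the gcd --- only that $p_1$ is a period of $f_2$. One small imprecision in your sketch: the propagation argument requires the window to meet every residue class modulo $p_1$, i.e.\ to have length at least $p_1$, not merely ``length $\ge\ell$'' as you state; since your window has length $p_1+p_2\ge\max(p_1,p_2)$ this is in fact satisfied and the argument goes through, but the general principle as phrased is too weak. Your handling of the second claim (propagating $f_1(i)=f_2(u+i)$ outside the window once $p_1=p_2$) is essentially identical to the paper's.
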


\begin{proof}
We may assume $p_1\le p_2$. 
Suppose $i,i+p_1\in A_2$.
There exists some $q$ such that 
$u+v+1\le i + p_2 q\le u+v+p_2$. Then
$f_2(i+p_1) = f_2(i + p_1 + p_2 q)
= f_1(i +p_1 +p_2 q - u )
= f_1(i+ p_2 q - u) 
= f_2(i+p_2 q)
=f_2(i)
$.
Note that we used the conditions that $v+1+p_1\le i+ p_1+p_2q-u \le v+p_1+p_2$
and $v+1\le i + p_2 q - u\le v+p_2$. This shows that $p_1$ is also the period of $f_2$, and so, $p_1=p_2$. Now suppose
$i\in A_1$ and $u+i\in A_2$.
For some $q'$, $v+1\le i+p_2 q' \le v+p_2$. 
Hence, $f_1(i) =  f_1(i+p_2q') = f_2(u+i+p_2q') = f_2(u+i)$.
\end{proof}

Recall that two elements in a free group are said to be {\em independent}
if they do not have non-trivial conjugate powers.
In the following lemma,
the special case when $u_1=u_2=\ldots=u_m$ is first proved in~\cite{baumslag1962}. This case was further generalized to any word-hyperbolic group in~\cite{GW2007}. The following proof uses a similar idea to~\cite{GW2007}.

\begin{lem}\label{lem:evinj0}
Let $F$ be a free group. 
Suppose $u_1,\ldots, u_m\in F\setminus\{1\}$, satisfying that any pair $u_i$ and $u_j$ are either equal or independent.
Set $u_0 = u_m$.
Choose $b_1,\ldots, b_m\in F$ such that $u_{i-1} = u_i$ only if $[b_i,u_i]\ne1$.
Then there exists $N>0$ such that for any $|n_1|,\ldots,|n_m|>N$,
$b_1 u_{1}^{n_1} b_2 u_{2}^{n_2} \cdots b_{m} u_{m}^{n_m}$
is non-trivial in $F$.
\end{lem}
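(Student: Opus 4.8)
The plan is to prove the statement by working in a Bass--Serre tree or, more elementary, by a normal-form / cancellation argument using the fact that high powers of independent or unequal elements force long stretches of uncancellable letters. First I would fix reduced words $\tilde u_i$ for each $u_i$ in $F$ and choose a geodesic metric viewpoint: the point is that $u_i^{n_i}$, once $|n_i|$ is large, contributes a long reduced subword that is ``locally periodic with period equal to the primitive root of $u_i$''. The obstacle to the product $b_1 u_1^{n_1}\cdots b_m u_m^{n_m}$ collapsing is that any cancellation between the block $u_i^{n_i}$ and its neighbours can eat only a bounded amount from each end, controlled by the lengths of the $b_j$ and the $u_j$; so if $N$ is chosen larger than all these bounded quantities, a long central portion of each $u_i^{n_i}$ survives in the reduced form, and in particular the whole product is nonempty.

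The key steps, in order, are: (1) Pass to cyclically reduced primitive roots $r_i$ with $u_i = z_i r_i^{k_i} z_i^{-1}$, and observe that the hypothesis ``$u_i,u_j$ equal or independent'' translates to ``$r_i,r_j$ are conjugate to a common element or generate a non-cyclic subgroup'' — i.e. $r_i$ and $r_j$ have no common power unless $u_i=u_j$. (2) Set a constant $C$ bounding $|b_i|$, $|u_i|$, $|z_i|$ for all $i$, and take $N$ comfortably larger than, say, $10C$. (3) Write out the concatenation and perform free reduction from the outside in; using step (1), show that between $u_{i-1}^{n_{i-1}}$ and $u_i^{n_i}$ the amount of cancellation is at most a function of $C$ — here the subtlety when $u_{i-1}=u_i$ is exactly why we demanded $[b_i,u_i]\neq 1$, since otherwise $u_{i-1}^{n_{i-1}} b_i u_i^{n_i}$ could be $b_i u_i^{n_{i-1}+n_i}$ with arbitrary cancellation. (4) Conclude that in the reduced word each block $u_i^{n_i}$ leaves behind a subword of length at least $|n_i|\cdot|u_i| - \text{(bounded)} > 0$, hence the product is nontrivial.

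For step (3) I would invoke Lemma~\ref{lem:period}: if the cancellation between $b_i u_i^{n_i}$ and $u_{i-1}^{n_{i-1}} b_i$ were long — longer than $p_{i-1}+p_i$ where $p_j$ is the period ($=|r_j|$ up to the conjugating element) — then Lemma~\ref{lem:period} forces $r_{i-1}$ and $r_i$ to determine the same bi-infinite periodic sequence, so $u_{i-1}$ and $u_i$ have a common power; by hypothesis this means $u_{i-1}=u_i$, and then the surviving commutator obstruction $[b_i,u_i]\neq 1$ still prevents total collapse. This is the step I expect to be the main obstacle: carefully bookkeeping the two-sided cancellation at each of the $m$ junctions simultaneously, and making the ``equal case'' argument precise, since one must rule out a cascade of cancellations propagating across several blocks at once. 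A clean way to organize it is to first treat a single junction in isolation (two high powers with a connecting word), extract the uniform bound there, and then note that because each surviving central portion is long and ``pinned'' to its own periodic pattern, the reductions at different junctions cannot interact once $N$ is large; a Britton's-lemma-style or stable-letter argument, or passing to the action on the tree associated to an HNN/amalgam decomposition of $F$ along the relevant cyclic subgroups, makes this independence transparent.

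Finally, with the bound $N$ in hand, I would simply record that for $|n_1|,\dots,|n_m| > N$ the reduced form of $b_1 u_1^{n_1}b_2 u_2^{n_2}\cdots b_m u_m^{n_m}$ has positive length, hence the element is nontrivial in $F$, completing the proof of Lemma~\ref{lem:evinj0}.
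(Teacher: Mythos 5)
Your approach shares the paper's key ingredients---cyclic reduction of the $u_i$, Lemma~\ref{lem:period} to control overlaps of high powers, and the commutator hypothesis to kill the degenerate collapsed case---but the step you yourself flag as ``the main obstacle'' is a genuine gap. You assert that cancellation at each of the $m$ junctions $u_{i-1}^{n_{i-1}} b_i u_i^{n_i}$ is bounded by a function of $C$ and that reductions at different junctions ``cannot interact once $N$ is large,'' but this requires an induction: after reducing one junction you must show the surviving left and right tails are still long, unreduced prefixes/suffixes of the appropriate $u_j^{\pm1}$-powers, so that the next junction sees the same local picture; the ``pinned to its own periodic pattern'' intuition is exactly what needs proof, not a remark. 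As written, the independence-of-junctions claim is asserted rather than established, so the argument does not close.

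The paper avoids the cascade problem by working globally with a dual van Kampen diagram for $w$. If $w=1$, the diagram is a disk with disjoint arcs pairing inverse letters. Since each $u_i$ is cyclically reduced, no arc has both endpoints inside the boundary interval carrying $u_i^{n_i}$, so for large $|n_i|$ pigeonhole gives at least $M=\frac{1}{m}\bigl(|n_i||u_i|-\sum_k|b_k|\bigr)$ arcs from $u_i^{n_i}$ to a single $u_j^{n_j}$; nestedness of arcs in a disk makes these consecutive, yielding a common subword of $u_i^{n_i}$ and $u_j^{-n_j}$ of length at least $M$. Taking $N$ so that $M>\sum_k|u_k|$, Lemma~\ref{lem:period} forces $u_i$ to be a cyclic conjugate of $u_j^{\pm1}$, and then $u_i=u_j$ by independence. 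Choosing an innermost such pair gives $j=i\pm1$, and cutting the diagram along the connecting arcs yields a diagram for a relation $u_i^{p}b_iu_i^{q}=1$, forcing $b_i\in\langle u_i\rangle$ and contradicting $[b_i,u_i]\ne1$. That diagram picture is precisely the global bookkeeping your sketch is missing; it is the cleanest way to finish. Your alternative Bass--Serre/Britton suggestion is underdeveloped and would need a preferred splitting of $F$ along the cyclic subgroups $\langle u_i\rangle$, which a free group does not intrinsically carry.
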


\begin{proof}
We may assume that each $u_i$ is cyclically reduced and not a proper power.
For $g\in F$, $|g|$ denotes the word-length of $g$.
Let $N$ be a sufficiently large integer which will be determined later in the proof, 
and $|n_1|,\ldots,|n_m|>N$. Suppose $ w = b_1 u_{1}^{n_1} b_2 u_{2}^{n_2} \cdots b_m u_m^{n_m}$ is trivial in $F$.
Consider a dual van Kampen diagram $\Delta$ of $w$, which is a disk along with disjoint properly embedded arcs.  The boundary
$\partial\Delta$ is divided into segments, each of which intersects with only one properly embedded arc.
For each $i$, the interval $u_i^{n_i}$ on $\partial\Delta$ intersects with $|n_i | |u_i|$ arcs. Since $u_i$ is cyclically reduced,
no arc intersects $u_i^{n_i}$ twice. Hence there exists $j$ such that there are at least
\[
M =  \frac1n ( |n_i| |u_i| - \sum_k |b_k|)
\]
arcs joining $u_i^{n_i}$ and $u_j^{n_j}$. If two arcs $\alpha$ and $\beta$
join $u_i^{n_i}$ and $u_j^{n_j}$, then so does any arc between $\alpha$ and $\beta$. 
This means $u_i^{n_i}$ and $u_j^{-n_j}$ have a common subword of length at least $M$.
The word $u_i^{n_i}$ is a finite sequence of the period $| u_i |$. Since $|u_i|\ne0$, one can choose a sufficiently large $N$
such that $M > \sum_k |u_k|$. Lemma~\ref{lem:period} implies that $u_i$ is a cyclic conjugation of $u_j^{\pm1}$. By the independence of
$u_i$ and $u_j$, $u_i = u_j$. 
Note that such $j$ exists for any $i$. So, if one chooses such a pair $(i,j)$ which is innermost,
then $j=i+1$ or $j=i-1$. Assume $j=i-1$.
In $\Delta$, some arcs join an interval of the form $u_i^k$ in $u_i^{n_i}$ to an interval in $u_{i-1}^{n_{i-1}}$.
By cutting $\Delta$ along these arcs, one obtains another dual van Kampen diagram for some word of the form $u_{i-1}^{p} b_i u_i^{q} = u_i^p b_i u_i^q$.
Here, $u_{i-1}^p$ and $u_i^q$ are subwords of
 of $u_{i-1}^{n_{i-1}}$
 and $u_i^{n_i}$, respectively.
We have $ u_i^p b_i u_i^q=1$, which is a contradiction to $[b_i,u_i]\ne1$.
\end{proof}

For a compact surface $S$, 
$D(S)$ denotes the double of $S$ along $\partial S$.
The following lemma is well-known when the surface $S$ has only one boundary component~\cite{wilton2006,baumslag1962}.

\begin{lem}\label{lem:evinj}
Let $S$ be a surface with the boundary components
$\partial_1 S,\ldots,\partial_m S$.
 $q:D(S)\rightarrow S$ denotes the natural
quotient map. 
Let $T_i:D(S)\rightarrow D(S)$
be the Dehn twist along $\partial_i S\subseteq D(S)$.
Then for any $x\in\pi_1(D(S))$
there exists $N>0$ 
 such that whenever $|n_1|,\ldots,|n_m|>N$,
$(q\circ T_1^{n_1}\circ\cdots\circ T_m^{n_m})_*(x)\ne\{1\}$.
\end{lem}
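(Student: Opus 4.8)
The plan is to reduce Lemma~\ref{lem:evinj} to the purely free-group statement of Lemma~\ref{lem:evinj0} by computing $(q\circ T_1^{n_1}\circ\cdots\circ T_m^{n_m})_*(x)$ explicitly in terms of the boundary words. First I would fix the standard presentation
\[
\pi_1(D(S)) = \langle x_1,y_1,\ldots,x_h,y_h,c_1,\ldots,c_m \mid \textstyle\prod_i [x_i,y_i]\prod_j c_j = 1, \; \ldots\rangle
\]
(where $h$ is the genus of $D(S)$ and $c_j = q^{-1}(\partial_j S)$, so that $c_j$ bounds an embedded two-sided curve in $D(S)$), and recall that the Dehn twist $T_j$ about $c_j$ acts on $\pi_1(D(S))$ by the usual transvection: it conjugates elements "crossing" $c_j$ by $c_j^{\pm1}$, while fixing $c_j$ itself. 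Writing $x\in\pi_1(D(S))$ as a reduced word and applying $T_1^{n_1}\circ\cdots\circ T_m^{n_m}$ produces an element of the form $\prod_{k=1}^{m'} g_k c_{j_k}^{\,\epsilon_k n_{j_k}}$ times a fixed element, where the $g_k$ are fixed elements of $\pi_1(D(S))$ depending only on $x$ and the presentation, and where consecutive crossings of the same curve $c_j$ are separated by a $g_k$ that does not commute with $c_j$ (otherwise the word $x$ could be reduced further, or the two crossings cancel). Then I would push everything forward under $q_*$.

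The key point of the reduction is that $q_*:\pi_1(D(S))\to\pi_1(S)$ is injective (a standard fact: $S$ is a retract of $D(S)$, since $D(S)$ deformation retracts onto $S$ in one half and the retraction extends, or more simply $\pi_1(S)$ is a free factor of $\pi_1(D(S))$ after collapsing). Actually what I really need is milder: $\pi_1(S)$ is a \emph{free group} (as $S$ has nonempty boundary), and the elements $u_j := q_*(c_j)$, being the $\pi_1$-images of the distinct boundary curves $\partial_j S$ of a hyperbolic surface with boundary, are pairwise independent in this free group (no two distinct boundary curves of a surface with boundary have conjugate powers — this is where hyperbolicity/essentiality of the boundary is used). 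Setting $b_k := q_*(g_k)$ and verifying that $u_{k-1} = u_k$ forces $[b_k,u_k]\ne 1$ (which follows from the non-cancellation observation above, transported by the injective $q_*$), Lemma~\ref{lem:evinj0} applies verbatim and yields an $N>0$ so that the word is nontrivial whenever all $|n_j|>N$.

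The main obstacle I expect is bookkeeping the action of the composite twist $T_1^{n_1}\circ\cdots\circ T_m^{n_m}$ on a general element $x$ and extracting the precise alternating form $b_1 u_{j_1}^{n_{j_1}}\cdots$ with the non-commutation condition at repeated letters — in particular making sure that after applying the twists and reducing, one genuinely lands in the hypothesis of Lemma~\ref{lem:evinj0} rather than in a degenerate case (e.g.\ when $x$ is itself peripheral, i.e.\ conjugate into some $\langle c_j\rangle$, the conclusion can fail and must be excluded; but note the statement as given does not exclude it, so one should either observe that for peripheral $x$ the twist $T_j$ along that very curve acts trivially and the other twists still move $x$ off the trivial element, or more carefully that $(q\circ T_1^{n_1}\circ\cdots)_*(x)=q_*(x)\ne 1$ already since $q_*$ is injective and $x\ne 1$). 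Handling this peripheral/degenerate case cleanly — essentially a separate, easy argument using injectivity of $q_*$ — together with the twist computation, is the bulk of the work; the rest is a direct appeal to Lemma~\ref{lem:evinj0}.
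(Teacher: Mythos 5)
Your overall strategy (decompose $x$ so that after applying the twists and $q_*$ one lands in the hypotheses of Lemma~\ref{lem:evinj0}, then invoke it) is the same as the paper's. However, there is a genuine error in your reduction: you assert that $q_*\co\pi_1(D(S))\rightarrow\pi_1(S)$ is injective, justifying it by the fact that $S$ is a retract of $D(S)$. That fact gives injectivity of the \emph{inclusion} $\pi_1(S)\hookrightarrow\pi_1(D(S))$, not of the retraction $q_*$ itself. Indeed $q_*$ is very far from injective: for any non-peripheral loop $\beta$ in $S$ with mirror image $h(\beta)$ in $S'$, the elements $[\beta]$ and $[h(\beta)]$ are distinct (even non-commuting) in $\pi_1(D(S))$ but have the same image under $q_*$. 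You use this false injectivity twice — to transport the non-commutation condition $[g_k,c_j]\ne1$ from $\pi_1(D(S))$ down to $\pi_1(S)$, and to dispose of the peripheral case by saying $q_*(x)\ne1$ — and both steps collapse without it.

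The paper avoids this issue by never needing global injectivity of $q_*$. It fixes arcs $\delta_1,\ldots,\delta_m$ joining the base point $v\in S$ to its mirror $v'\in S'$, with $\delta_i$ crossing only $\partial_i S$, and writes an arbitrary $x\in\pi_1(D(S))$ as an alternating concatenation $\beta_1\cdot\delta_{i_1}\cdot\beta_2\cdot\delta_{i_2}^{-1}\cdots$, where the $\beta_k$ are loops entirely inside $S$ or entirely inside $S'$. Because of this, $(q\circ T^n)_*$ can be computed directly: it sends each $\delta_{i_k}$ to $d_{i_k}^{\pm n_{i_k}}$ and each $\beta_k$ to either $[\beta_k]$ or $[h(\beta_k)]$, both of which lie in $\pi_1(S)$ and are controlled because the restriction of $q_*$ to (each copy of) $\pi_1(S)$ is an isomorphism. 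The minimality of the decomposition then gives the non-commutation hypothesis of Lemma~\ref{lem:evinj0} intrinsically in $\pi_1(S)$, rather than by attempting to push a condition forward through $q_*$. To repair your argument you would need to replace the appeal to injectivity of $q_*$ with a decomposition of this kind that keeps the ``coefficients'' $g_k$ inside the two halves of $D(S)$; as written, the proof does not go through.
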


\begin{proof}
Put $\pi_1(S) = \langle x_1,x_2,\ldots,x_g,y_1,y_2,\ldots,y_g,d_1,d_2,\ldots,d_m\; | \;
\prod[x_i,y_i]\prod d_i = 1 \rangle$, 
where $d_i$ is represented by a loop freely homotopic to $\partial_i S$.
Let $h:S'\rightarrow S$ be a homeomorphism, such that $D(S) = S\cup S'$ glued along
$\partial_i S = \partial_i S'$ for each $i$.
Let $v$ be the base point of $S$, and $v'$ be its image in $S'$. 
One can find arcs $\delta_1,\delta_2,\ldots,\delta_m$  joining $v$ to $v'$ in $D(S)$
such that (Figure~\ref{fig:double}),
\enumir
\be
\item
$\delta_i$ and $\partial_j S$ intersect if and only if $i=j$,
\item
$
[q\circ T_1^{n_1}\circ\cdots\circ T_m^{n_m}(\delta_i)]
=
[q\circ T_i^{n_i}(\delta_i)]
= d_i^{n_i}$.
\ee
\enumia

\begin{figure}[htb!] 
\labellist 
\small\hair 2pt 
\pinlabel {${}_v$}  [rt] at 44 27
\pinlabel {${}_{v'}$} [lt] at 133 31
\pinlabel {$\scriptstyle S$} [rb] at 13 60
\pinlabel {$\scriptstyle S'$} [lb] at 156 60
\pinlabel {$\scriptstyle {\partial_2} S$} [t] at 84 -1
\pinlabel {${}_{{\partial_1} S}$} [b] at 84 75
\pinlabel {$\scriptstyle {\delta_1}$} [lb] at 111 49
\pinlabel {$\scriptstyle {\delta_2}$} [lt] at 116 18
\endlabellist 
\centering
\includegraphics{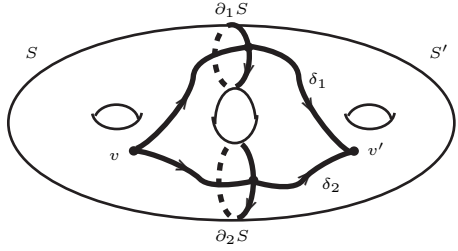}
\caption{The double of a surface. \label{fig:double}}
\end{figure}

Choose any $x\in \pi_1(D(S))$. For some $l\ge0$ and $1\le i_1,i_2,\ldots,i_{2l} \le m$, 
$x$ can be represented as a concatenation of arcs : 
$x = \beta_1\cdot \delta_{i_1}\cdot \beta_2\cdot \delta_{i_2}^{-1}\cdots \beta_{2l}\cdot \delta_{i_{2l}}^{-1}$.
Here, $\beta_1,\beta_3,\beta_5,\ldots$ are loops in $S$ based at $v$,
and  $\beta_2,\beta_4,\beta_6,\ldots$ are loops in $S'$ based at $v'$.
By choosing the minimal $l$, one may assume that if $i_{k-1} = i_k$ then $\beta_k$ is not homotopic into $\partial_{i_k} S = \partial_{i_k} S'$. 
One can write
\[ 
(q\circ T_1^{n_1}\circ\cdots\circ T_m^{n_m})_*(x)
=
 [\beta_1]d_{i_1}^{n_{i_1}}
 [h(\beta_2)] d_{i_2}^{-n_{i_2}}
  \cdots  [h(\beta_{2l})] d_{i_{2l}}^{-n_{i_{2l}}}.\]
By applying Lemma~\ref{lem:evinj0} to the free group $\pi_1(S)$,
one sees that 
$(q\circ T_1^{n_1}\circ\cdots\circ T_m^{n_m})_*(x)\ne1$
for sufficiently large $n_i$.
 \end{proof}

From now on, we denote the set of {\em maximal} complete subgraphs of $\Gamma$ by $\KK(\Gamma)$.
We define a graph operation, called {\em simplicial extension}.

\enumir
\begin{defn} \label{defn:simplicialextension}
Let $\Gamma$ be a  graph. Define  the {\em simplicial extension of $\Gamma$},
denoted by $\Gamma^*$, to be the graph having the following vertex and edge sets.
\be
\item
$V(\Gamma^*)=V(\Gamma)\sqcup\{v_{K,u}|K\in\KK(\Gamma),\;u\in V(K)\}$
\item
$E(\Gamma^*)
= E(\Gamma) \sqcup
\{\{v_{K,u},u'\}|K\in\KK(\Gamma),\;u,u'\in V(K)\}$
\ee
\end{defn}
\enumia

$\Gamma^*$ is obtained from $\Gamma$ by adding a  simplicial vertex (denoted by
$v_{K,u}$) for each pair of a maximal complete subgraph $K$ and a vertex $u$ of $K$;
see Figure~\ref{fig:dinfty}.
We first make a graph theoretical observation regarding simplicial extensions.

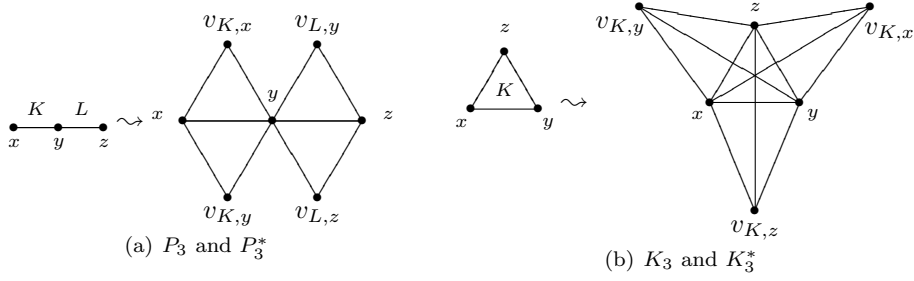
\begin{figure}[htb!] 
\subfigure[$P_3$ and $P_3^*$]{
$$
\xymatrix{
\xy
0;/r.10pc/:
(-14,0)*{}="x"; 
(-14,-5)*{\scriptstyle x};
(0,0)*{}="y"; 
(0,-5)*{\scriptstyle y};
(14,0)*{}="z"; 
(14,-5)*{\scriptstyle z};
(-7,6)*{\scriptstyle K};
(7,6)*{\scriptstyle L};
"x"*{{}_\bullet};
"y"*{{}_\bullet};
"z"*{{}_\bullet};
"x";"y"**\dir{-};
"z";"y"**\dir{-};
"z";"x"**\dir{-};
\endxy
\leadsto
\xy
0;/r.20 pc/:
(-14,0)*{}="x"; 
(-18,1)*{\scriptstyle x};
(0,0)*{}="y"; 
(0,4)*{\scriptstyle y};
(14,0)*{}="z"; 
(18,1)*{\scriptstyle z};
(-7,12)*{}="vx";
(-7,15)*{v_{K,x}};
(7,12)*{}="vy";
(7,15)*{v_{L,y}};
(-7,-12)*{}="vyy";
(-7,-15)*{v_{K,y}};
(7,-12)*{}="vz";
(7,-15)*{v_{L,z}};
"x"*{{}_\bullet};
"y"*{{}_\bullet};
"z"*{{}_\bullet};
"vx"*{{}_\bullet};
"vy"*{{}_\bullet};
"vyy"*{{}_\bullet};
"vz"*{{}_\bullet};
"x";"y"**\dir{-};
"z";"y"**\dir{-};
"z";"x"**\dir{-};
"x";"vx"**\dir{-};
"x";"vyy"**\dir{-};
"z";"vy"**\dir{-};
"z";"vz"**\dir{-};
"y";"vy"**\dir{-};
"y";"vz"**\dir{-};
"y";"vx"**\dir{-};
"y";"vyy"**\dir{-};
\endxy
}
$$
}
\hfill
\subfigure[$K_3$ and $K_3^*$]{
$$
\xymatrix{
\xy
0;/r.15pc/:
(-14,0)*{}="x"; 
(-16,-3)*{\scriptstyle x};
(0,0)*{}="y"; 
(2,-3)*{\scriptstyle y};
(-7,12)*{}="z"; 
(-7,17)*{\scriptstyle z};
(-7,4)*{\scriptstyle K};
"x"*{{}_\bullet};
"y"*{{}_\bullet};
"z"*{{}_\bullet};
"x";"y"**\dir{-};
"z";"y"**\dir{-};
"z";"x"**\dir{-};
\endxy
\leadsto
\xy
0;/r.20 pc/:
(-14,0)*{}="x"; 
(-16,-2)*{\scriptstyle x};
(0,0)*{}="y"; 
(2,-2)*{\scriptstyle y};
(-7,12)*{}="z";
(-7,15)*{\scriptstyle z};
(-7,-17)*{}="vz";
(-7,-20)*{v_{K,z}};
(11,15)*{}="vx";
(14,11)*{v_{K,x}};
(-25,15)*{}="vy";
(-28,12)*{v_{K,y}};
"x"*{{}_\bullet};
"y"*{{}_\bullet};
"z"*{{}_\bullet};
"vx"*{{}_\bullet};
"vy"*{{}_\bullet};
"vz"*{{}_\bullet};
"x";"y"**\dir{-};
"z";"y"**\dir{-};
"z";"x"**\dir{-};
"x";"vx"**\dir{-};
"y";"vx"**\dir{-};
"z";"vx"**\dir{-};
"x";"vy"**\dir{-};
"y";"vy"**\dir{-};
"z";"vy"**\dir{-};
"x";"vz"**\dir{-};
"y";"vz"**\dir{-};
"z";"vz"**\dir{-};
\endxy
}
$$
}
\caption{Examples of simplicial extensions. \label{fig:dinfty}}
\end{figure}

\begin{lem}[decomposing $\Gamma^*$] \label{lem:simplicial_extension}
Suppose $\Gamma$ is a  complete graph amalgamation of $\Gamma_1$ and $\Gamma_2$.
Then $\Gamma^*$ is a complete graph amalgamation of $\Gamma_1'$ and $\Gamma_2'$
such that for each $i$,
\enumir
\be
\item
$\Gamma_i\le\Gamma_i'\le\Gamma_i^*$,
\item
$V(\Gamma_i')\setminus V(\Gamma_i)$ is a set of independent simplicial vertices of $\Gamma_i'$.
\ee
\enumia
\end{lem}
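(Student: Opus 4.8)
The plan is to exhibit the amalgamation decomposition of $\Gamma^*$ explicitly, distributing its new vertices between the two factors according to which of $\Gamma_1,\Gamma_2$ carries the relevant maximal complete subgraph. Throughout, write $K=\Gamma_1\cap\Gamma_2$; by hypothesis $K$ is complete and each $\Gamma_i$ is induced in $\Gamma$, and I will denote the new vertices of $\Gamma^*$ by $v_{M,u}$, indexed by pairs $(M,u)$ with $M$ a maximal complete subgraph of $\Gamma$ and $u\in V(M)$, to avoid clashing with $K$. The one graph-theoretic input I need is that \emph{every complete subgraph of $\Gamma$ lies in $\Gamma_1$ or in $\Gamma_2$}: if a complete $C\le\Gamma$ had vertices $a\in V(\Gamma_1)\setminus V(\Gamma_2)$ and $b\in V(\Gamma_2)\setminus V(\Gamma_1)$, the edge $\{a,b\}$ would have to lie in $E(\Gamma_1)$ or in $E(\Gamma_2)$, forcing $b\in V(\Gamma_1)$ or $a\in V(\Gamma_2)$, a contradiction; and since each $\Gamma_i$ is induced, $V(C)\subseteq V(\Gamma_i)$ yields $C\le\Gamma_i$. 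Two consequences will be used: if $M\in\KK(\Gamma)$ and $M\le\Gamma_i$ then $M\in\KK(\Gamma_i)$ as well, since a complete subgraph of $\Gamma_i$ properly containing $M$ would be one in $\Gamma$; and if $M\in\KK(\Gamma)$ lies in both $\Gamma_1$ and $\Gamma_2$ then $V(M)\subseteq V(K)$, so $M=K$ by maximality.

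Next I would construct $\Gamma_1'$ and $\Gamma_2'$. For each $M\in\KK(\Gamma)$ choose a factor containing $M$, taking $\Gamma_1$ in the ambiguous case $M=K$, and assign all the new vertices $v_{M,u}$ $(u\in V(M))$ to that factor. Then let $\Gamma_i'$ be the subgraph of $\Gamma^*$ induced on $V(\Gamma_i)$ together with all new vertices assigned to the $i$-th factor. By the first consequence above every new vertex of $\Gamma^*$ is assigned to exactly one factor, so $V(\Gamma_1')\cup V(\Gamma_2')=V(\Gamma^*)$, while $V(\Gamma_1')\cap V(\Gamma_2')=V(\Gamma_1)\cap V(\Gamma_2)=V(K)$ because no new vertex is put into both.

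Then I would check the three assertions. For the amalgamation structure: the $\Gamma_i'$ are induced in $\Gamma^*$ and their vertex sets cover $V(\Gamma^*)$, so it remains to see that every edge of $\Gamma^*$ has both endpoints in a single factor; this holds for edges of $\Gamma$ (which lie in $\Gamma_1$ or $\Gamma_2$), and for an edge $\{v_{M,u},u'\}$ with $u'\in V(M)$ it holds because $v_{M,u}$ and the whole set $V(M)$ lie in the factor chosen to contain $M$. Hence $\Gamma^*=\Gamma_1'\cup\Gamma_2'$, and $\Gamma_1'\cap\Gamma_2'$ is the subgraph induced on $V(K)$, namely $K$, which is complete; so $\Gamma^*$ is a complete graph amalgamation of $\Gamma_1'$ and $\Gamma_2'$. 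For (1): $\Gamma_i\le\Gamma_i'$ is clear, and since $M\in\KK(\Gamma_i)$ for every new vertex $v_{M,u}$ of $\Gamma_i'$, such a vertex is one of the simplicial vertices that $\Gamma_i^*$ adjoins to $\Gamma_i$, with the same neighbourhood $V(M)$ in $\Gamma_i'$ as in $\Gamma_i^*$; hence $\Gamma_i'$ coincides with the induced subgraph of $\Gamma_i^*$ on $V(\Gamma_i)$ together with those vertices, i.e.\ $\Gamma_i\le\Gamma_i'\le\Gamma_i^*$. For (2): in $\Gamma^*$ a new vertex is adjacent only to old vertices, so no two new vertices are adjacent and $V(\Gamma_i')\setminus V(\Gamma_i)$ is independent; and the vertices adjacent to $v_{M,u}$ in $\Gamma_i'$ are exactly $V(M)$, which induces the complete graph $M$, so $v_{M,u}$ is simplicial in $\Gamma_i'$.

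The proof is essentially bookkeeping, and I expect no real obstacle; the one point that needs care is the tie-breaking in the construction. The vertices $v_{K,u}$ for distinct $u\in V(K)$ are pairwise non-adjacent in $\Gamma^*$, so if all of them were placed in both factors the intersection $\Gamma_1'\cap\Gamma_2'$ would fail to be complete; assigning each maximal complete subgraph of $\Gamma$ — in particular $K$, when $K\in\KK(\Gamma)$ — to a single factor removes this issue, and it is harmless because (1) asks only that $\Gamma_i'$ lie between $\Gamma_i$ and $\Gamma_i^*$, not that it realise all of $\Gamma_i^*$.
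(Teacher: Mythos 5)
Your proof is correct and rests on the same underlying observation as the paper's, namely that each maximal complete subgraph $M$ of $\Gamma$ lies entirely in $\Gamma_1$ or in $\Gamma_2$ (with the only possible overlap being $M=K$), so that the new vertices $v_{M,u}$ of $\Gamma^*$ can be distributed between the factors accordingly. The one notable difference is organizational: the paper splits into cases according to whether $K$ is maximal in one of the $\Gamma_i$ and explicitly builds $\Gamma_1'$ as $\Gamma_1^*$ with the $v_{K,u}$ deleted (taking $\Gamma_2'=\Gamma_2^*$), whereas you give a uniform assignment with a tie-break at $M=K$; your formulation handles the $K=\varnothing$ case without a separate reduction and is a little cleaner, while the paper's produces the more explicit description $\Gamma_i'\in\{\Gamma_i^*,\ \Gamma_i^*\setminus\{v_{K,u}\}\}$. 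Both arguments are sound, and the tie-breaking point you flag at the end is exactly the subtlety the paper's case analysis is built around.
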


\begin{proof}
Let $K=\Gamma_1\cap\Gamma_2$. We may assume that $K\ne\varnothing$.
Note that $\KK(\Gamma_1)\cap\KK(\Gamma_2)\subseteq\{K\}$ and 
$\KK(\Gamma) \subseteq \KK(\Gamma_1)\cup \KK(\Gamma_2) \subseteq \KK(\Gamma) \cup\{K\}$

{\em Case 1. $K\in \KK(\Gamma_1)\cup\KK(\Gamma_2)$.}

We may assume $K\in\KK(\Gamma_1)$. 
Then, $K$ is maximal in $\Gamma_2$ if and only if $K$ is maximal in $\Gamma$. 
Combining this with $\KK(\Gamma_2)\subseteq
\KK(\Gamma_1)\cup \KK(\Gamma_2) \subseteq \KK(\Gamma) \cup\{K\}$, 
one has
$\KK(\Gamma_2)\subseteq \KK(\Gamma)$.
Define
 $\Gamma_1'$ to be the graph obtained from $\Gamma_1^*$ by
removing the simplicial vertices $\{ v_{K,u}\; | \; u\in V(K)\}$. Put 
$\Gamma_2'=\Gamma_2^*$. 
$\Gamma_1'\cap\Gamma_2' = \Gamma_1\cap\Gamma_2=K$.
Moreover, 
$V(\Gamma^*)\subseteq V(\Gamma_1'\cup\Gamma_2')\cup\{v_{K,u}\;|\;u\in V(K)\}$.
Since
 $\KK(\Gamma_1)\setminus\{K\}\subseteq \KK(\Gamma)$ and $\KK(\Gamma_2)\subseteq\KK(\Gamma)$,
$\Gamma_1'\cup\Gamma_2' \subseteq \Gamma^*$. 
If $K\not\in\KK(\Gamma)$, then $v_{K,u}\not\in V(\Gamma^*)$ for each $u\in V(K)$, and so,
$V(\Gamma^*)\subseteq V(\Gamma_1'\cup\Gamma_2')$.
If $K\in\KK(\Gamma)$, then $K\in\KK(\Gamma_2)$ and  $v_{K,u}\in V(\Gamma_2')$ for each $u\in V(K)$;
this implies $V(\Gamma^*)\subseteq V(\Gamma_1'\cup\Gamma_2')\cup\{v_{K,u}\;|\;u\in V(K)\}
=V(\Gamma_1'\cup\Gamma_2')$.
It follows that $ \Gamma^* = \Gamma_1'\cup\Gamma_2' $.

{\em Case 2. $K\not\in\KK(\Gamma_1)\cup\KK(\Gamma_2)$}

In this case, $\KK(\Gamma) = \KK(\Gamma_1)\sqcup \KK(\Gamma_2)$.
Hence, $\Gamma^* = \Gamma_1^*\cup\Gamma_2^*$ 
and $\Gamma_1^*\cap\Gamma_2^* = \Gamma_1\cap\Gamma_2 = K$. 
Set $\Gamma_1' = \Gamma_1^*$
and
$\Gamma_2' = \Gamma_2^*$.
 \end{proof}

Lemma~\ref{lem:promotion} is a key step for the proof of Theorem~\ref{thm:completeamalgam}. 
The lemma states that
a relative embedding of a compact hyperbolic surface group into $A(\Gamma)$ can be ``promoted'' to 
an embedding of a closed hyperbolic surface group into $A(\Gamma^*)$.

\begin{lem} \label{lem:promotion}
Let $\Gamma$ be a graph. Then $\Gamma\in\NNN'$ if and only if $\Gamma^\ast\in \NNN$.
\end{lem}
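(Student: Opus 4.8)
I will prove the two directions separately. The easy direction is "$\Gamma^\ast \in \NNN \implies \Gamma \in \NNN'$": its contrapositive says that a relative embedding of a compact hyperbolic surface group into $A(\Gamma)$ yields an \emph{ordinary} embedding of a closed hyperbolic surface group into $A(\Gamma^\ast)$, which is the promotion statement advertised before the lemma. The harder direction, "$\Gamma \in \NNN' \implies \Gamma^\ast \in \NNN$", I will also prove by contrapositive: starting from a closed hyperbolic surface group inside $A(\Gamma^\ast)$, I want to produce a \emph{relative} embedding of a compact hyperbolic surface group into $A(\Gamma)$; the idea is to cut along the curves and arcs carrying the new simplicial labels $v_{K,u}$ and observe that the resulting pieces are relatively embedded in $A(\Gamma)$.

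\textbf{The promotion construction (forward direction).} Suppose $\Gamma \notin \NNN'$, so there is a compact hyperbolic surface $S$ with boundary components $\partial_1 S, \ldots, \partial_m S$ and a relative embedding $\phi \co \pi_1(S) \to A(\Gamma)$, realized by a normalized label-reading pair $(\HH, \lambda)$ satisfying Proposition~\ref{prop:lrexists}(2): for each $i$ there is a maximal complete subgraph $K_i$ (we may enlarge $K$ to a maximal one) such that every arc of $\HH$ meeting $\partial_i S$ is labeled in $V(K_i)$, and indeed $\phi(d_i)$ is conjugate into $A(K_i)$. The plan is to glue onto $S$, along each $\partial_i S$, a once-punctured torus (or more simply an annulus carrying two $v_{K_i, u}$-labeled curves whose commutator is $\phi(d_i)$-killing) so as to cap off the boundary and kill each peripheral element. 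Concretely: $A(K_i^\ast)$ contains $A(K_i) \times \langle v_{K_i, u}, v_{K_i, u'} \rangle$ for two distinct vertices $u, u' \in V(K_i)$ (if $|V(K_i)| = 1$ one uses a single extra vertex and a different capping surface, or notes $A(K_i) \cong \Z$ and caps directly). Using Lemma~\ref{lem:evinj} applied to $D(S)$ — or rather a direct Dehn-twist argument — I can choose, for each boundary component, a map from a once-punctured surface into $X_{K_i^\ast}$ sending the new boundary to a large power of $\partial_i S$ composed with commutators of the new simplicial generators, arranged so that the resulting closed surface group maps injectively into $A(\Gamma^\ast)$. The injectivity of the glued-up map follows from the amalgamated/graph-of-groups structure of $A(\Gamma^\ast)$ together with the fact (Lemma~\ref{lem:evinj}, Lemma~\ref{lem:evinj0}) that suitably large powers avoid cancellation, so no essential curve in the new closed surface is killed. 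The closed surface is hyperbolic because we only increased genus. Hence $\Gamma^\ast \notin \NNN$.

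\textbf{The reverse direction.} Suppose $\Gamma^\ast \notin \NNN$: there is a closed hyperbolic surface $\Sigma$ and an embedding $\psi \co \pi_1(\Sigma) \to A(\Gamma^\ast)$, realized by a normalized label-reading pair $(\KK, \mu)$ on $\Sigma$. Let $\HH_0 \subseteq \KK$ be the curves labeled by the new vertices $\{v_{K,u}\}$; since each such vertex is simplicial in $\Gamma^\ast$, and since distinct new vertices $v_{K,u}$, $v_{K',u'}$ with $K \ne K'$ are never adjacent, curves of $\HH_0$ with non-adjacent labels are disjoint, so $\cup\HH_0$ is a $1$-submanifold we may cut along. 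Cutting $\Sigma$ along $\cup\HH_0$ produces compact subsurfaces; pick a hyperbolic component $S'$ (one exists, e.g.\ by Euler characteristic count, provided the cut does not reduce everything to annuli/disks — and if it does, one argues directly that $\psi$ could not have been injective using Lemma~\ref{lem:simplicialv}'s method). On $S'$ the induced label-reading pair uses only vertices of $\Gamma$ (we discard the $\HH_0$-curves themselves, at the cost of the cut), giving $\phi' \co \pi_1(S') \to A(\Gamma)$, injective as a restriction of $\psi$. Each new boundary component of $S'$ came from a $v_{K,u}$-curve, hence meets in $\KK$ only curves labeled in $V(K)$ — this is exactly the relative-embedding condition for $\Gamma$, since $K$ is complete. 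Therefore $\phi'$ is a relative embedding and $\Gamma \notin \NNN'$.

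\textbf{Main obstacle.} The delicate point is the forward (promotion) direction: ensuring that the capping surfaces can be glued so that the total map on the closed surface group is \emph{injective}, not merely nontrivial on one element. This requires a careful graph-of-groups/normal-form argument in $A(\Gamma^\ast)$ — one must rule out that some new essential simple closed curve of $\Sigma$, crossing both the old part $S$ and a cap, maps to the identity. The quantitative input (Lemma~\ref{lem:evinj0}, Lemma~\ref{lem:evinj}) guarantees peripheral elements stay nontrivial under large twisting, but upgrading this to full injectivity of the surface subgroup is where the real work lies; I expect the argument to proceed by analyzing how a geodesic word in $\pi_1(\Sigma)$ decomposes into syllables alternating between $A(\Gamma)$-syllables and syllables in the new simplicial rank-one (or rank-two) subgroups, and invoking the ping-pong/normal-form behavior of the Bass--Serre tree for the successive HNN/amalgam splittings of $A(\Gamma^\ast)$ along the complete subgraphs $A(K_i)$.
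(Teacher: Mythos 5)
Your write-up identifies the correct shape of the lemma (one easy direction, one substantive ``promotion'' direction), but neither direction is actually proved, and one of the two has a concrete error.

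\textbf{The promotion direction.} You correctly single out the heart of the matter: turning a relative embedding $\phi\co\pi_1(S)\to A(\Gamma)$ into an honest embedding of a closed hyperbolic surface group into $A(\Gamma^*)$, and you correctly observe that the obstacle is proving injectivity of the glued-up map on the \emph{whole} closed surface group, not merely on a single element. But you stop exactly there: the ``Main obstacle'' paragraph is a plan, not an argument. Moreover, your proposed capping surfaces (once-punctured tori, or annuli---an annulus does not even cap a boundary component) differ from the paper's construction and are not specified in a way one could check. The paper's route is cleaner and different: it takes the double $D(S)=S\cup S'\cup(A_1\cup\cdots\cup A_m)$, carries $\HH$ over to $S'$ by the doubling homeomorphism, closes up arcs across the annuli, and then places in each $A_i$ a family of simple closed curves labeled by the new simplicial vertices $v_{K,u}$. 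Injectivity is obtained by the commutative diagram
\[
\xymatrix{\pi_1(D(S))\ar[d]_{(q\circ T^n)_*}\ar[r]^-{\phi'} & A(\Gamma^*)\ar[d]^{p_n}\\ \pi_1(S)\ar[r]^-{\phi} & A(\Gamma)}
\]
where $p_n$ sends $v_{K,u}\mapsto u^n$ and fixes $V(\Gamma)$, $T$ is the composition of Dehn twists about the $\partial_i S$ inside $D(S)$, and $q\co D(S)\to S$ is the fold map. Commutativity (up to conjugation) is a direct label-reading computation. Given $1\neq x\in\pi_1(D(S))$, Lemma~\ref{lem:evinj} produces an $n$ with $(q\circ T^n)_*(x)\neq 1$, so injectivity of $\phi$ forces $p_n\phi'(x)\neq 1$, hence $\phi'(x)\neq 1$. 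This is the step your sketch does not supply: Lemma~\ref{lem:evinj} is not merely used to show ``large powers avoid cancellation'' in some informal sense, it is precisely the input that makes the commutative diagram conclude injectivity of $\phi'$. Your appeal to ``graph-of-groups/normal-form behavior in $A(\Gamma^*)$'' is a different strategy; it might work, but as written it is a conjecture, not a proof.

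\textbf{The other direction.} Here you are both overcomplicating and making an error. Since $\Gamma^*$ is obtained from $\Gamma$ by adding a set of \emph{independent} simplicial vertices $\{v_{K,u}\}$, Lemma~\ref{lem:simplicialv} (already proved) applies directly: $\Gamma\in\NNN'$ gives $\Gamma^*\in\NNN'\subseteq\NNN$. There is nothing else to do. Your cutting argument, apart from being redundant, contains a false step: you claim ``curves of $\HH_0$ with non-adjacent labels are disjoint, so $\cup\HH_0$ is a $1$-submanifold.'' This conclusion does not follow, because two curves labeled $v_{K,u}$ and $v_{K,u'}$ with the same $K$ but $u\neq u'$ \emph{are} adjacent in $\Gamma^*$, hence are permitted to cross; so $\cup\HH_0$ need not be an embedded $1$-submanifold and one cannot cut along all of it at once. (Lemma~\ref{lem:simplicialv} avoids this by removing one simplicial vertex at a time and handling the arc case separately.) Finally, note that even in the closed-surface setting there is no a priori reason the $v_{K,u}$-labeled curves are all essential and disjoint from each other until normalization is invoked, and you would still have to rule out the degenerate case (all cut pieces annular or planar)---you gesture at this without completing it.
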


\begin{proof}
$(\Rightarrow)$
Suppose $\Gamma\in\NNN'$.
$\Gamma$ is obtained from $\Gamma^*$ by removing a set of independent simplicial vertices.
By Lemma~\ref{lem:simplicialv},
$\Gamma^*\in\NNN'\subseteq\NNN$.

$(\Leftarrow)$
Suppose $\Gamma\not\in\NNN'$. Fix a compact hyperbolic surface $S$ and a relative embedding
$\phi:\pi_1(S)\rightarrow A(\Gamma)$, which is a label-reading map with respect to  
 $(\HH,\lambda)$. Denote the boundary components of $S$ by $\partial_1 S, \partial_2 S,\ldots,\partial_m S$.
Recall $\partial S$ is oriented so that $\sum[\partial_i S]=0$ in $H_1(S)$.
Since $\phi$ is a relative embedding, 
we may assume that for each boundary component $\partial_i S$ of $S$, there exists a complete subgraph $K$ of $\Gamma$ 
 such that the curves and the arcs in $\HH$ intersecting with $\partial_i S$ are labeled by $V(K)$.
 
 Let $S'$ be a surface homeomorphic to $S$ by a homeomorphism $g:S\rightarrow S'$. Put
 $\partial_i S'=g(\partial_i S)$.
 We consider $D(S)$ as the union of $S,S'$ and the annuli $A_1,A_2\ldots,A_m$.
Here, $A_i$ is parametrized by $f_i : [-1,1]\times S^1\rightarrow A_i$, such that
 $f_i({-1}\times S^1)$ and $f_i({1}\times S^1)$ are glued to $\partial_i S$ and $\partial_i S'$, respectively.
 We will define a label-reading pair $(\HH',\lambda')$ on $D(S)$, which restricts to $(\HH,\lambda)$ on $S$.
To do this, we will write $\HH' = \HH_1' \cup \HH_2' \cup \HH_3'$  as follows.
 
$\HH_1'$ will be the collection of the simple closed curves $\gamma$ and $g(\gamma)$, for all simple closed curves $\gamma\in\HH$.
Here, we let $\gamma$ and $g(\gamma)$ in $\HH_1'$ inherit the label and the transverse orientation of $\gamma\in\HH$.

To define $\HH_2'$, let $\gamma\in\HH$ be a properly embedded arc, joinining $\partial_i S$ and $\partial_j S$.
Let $f_i(-1\times p_i)$ and $f_j(-1\times p_j)$ be the intersection of $\gamma$ with $\partial_i S$ and $\partial_j S$, respectively.
There exists a simple closed curve $\tilde\gamma$ on $D(S)$ obtained by taking a concatenation of $\gamma,f_j([-1,1]\times p_j),g(\gamma^{-1})$ and 
$f_i([-1,1]\times p_i)$. Again, we let 
$\tilde\gamma$ 
inherit the
label and the transverse orientation of $\gamma$, and define $\HH_2'$ to be the union of all such simple closed curves $\tilde\gamma$ on $D(S)$, where the
union is taken over all properly embedded arcs
$\gamma\in\HH$.

Now we define $\HH_3'$ as follows. 
Consider any boundary component $\partial_i S$,
and let $\beta_1,\beta_2,\ldots,\beta_s\subseteq S$ be the properly embedded arcs in $\HH$
intersecting with $\partial_i S$.
There exists
a (possibly non-unique) maximal complete subgraph $K\le\Gamma$ such that 
$\lambda(\beta_j)\in V(K)$ for all $j$.
We choose  disjoint essential simple closed curves $\alpha_1,\ldots,\alpha_s$ in the interior of  $A_i$, 
and let 
$\lambda'(\alpha_j)=v_{K,\lambda(\beta_j)}\in V(\Gamma^*)$, for each $j$.
 Moreover, we let the transverse orientation of $\alpha_j$ be from $f_i(-1\times S^1)$ to $f_i(1\times S^1)$, 
 if the transverse orientation of $\beta_j$ coincides with the orientation of $\partial_i S$, and be the opposite otherwise (Figure~\ref{fig:twist}\;(a)).
Let $\HH_3'$ be the union of
all such $\alpha_j$'s, where the union is taken over all
the boundary components $\partial_1 S,\ldots,\partial_m S$. In this way, we have defined a set of transversely oriented curves
and arcs 
$\HH'=\HH_1'\cup\HH_2'\cup\HH_3'$ and a labeling $\lambda':\HH'\rightarrow V(\Gamma^*)$. 

Let $\phi':\pi_1(D(S))\rightarrow A(\Gamma^*)$
be the label-reading map with respect to $(\HH',\lambda')$. Fix any $n>0$. Define $p_n:A(\Gamma^*)\rightarrow A(\Gamma)$
by 
$p_n(a) =a$ for $a\in V(\Gamma)$,
and
$p_n(v_{K,u}) = u^n$ for $K\in \KK(\Gamma)$ and $u\in V(K)$. 
Let $T_i$ be the Dehn Twist of $D(S)$ along $\partial_i S$, and $T=T_1\circ T_2\circ\cdots \circ T_m$.

\begin{claim*}
The  following diagram commutes up to equivalence.
\[\xymatrix{	\pi_1(D(S))\ar[d]^{(q\circ T^n)_*}\ar[r]^{\phi'}
 	& A(\Gamma^*)\ar[d]^{p_n} \\
	\pi_1(S)\ar[r]^{\phi} & A(\Gamma) 
	}
\]
\end{claim*}

Note that $\phi\circ q_*$ is the label-reading map with respect to the pair
$(\HH_1'\cup\HH_2',\lambda')$.
Similarly, $\phi\circ (q\circ T^n)_*$
is
the label-reading map with respect to the pair consisting of the set
$T^{-n}(\HH_1'\cup\HH_2')$, and the labeling map $\lambda'\circ T^n$.

Let $\alpha\in\HH_3'$ be any simple closed curve inside an annulus, say $A_i$.
Write $\lambda'(\alpha) = v_{K,u}$ for some $K\in\KK(\Gamma)$ and $u\in V(K)$.
Consider $n$ copies of disjoint essential simple closed curves
$\tilde\alpha_1,\tilde\alpha_2,\cdots,\tilde\alpha_n\subseteq A_i$,
with the same transverse orientation as $\alpha$.
Label $\tilde\alpha_1,\tilde\alpha_2,\cdots,\tilde\alpha_n$ by $u$.
Define
$\CCC_n$ to be the union of all such $\tilde\alpha_j$'s, where
the union is taken over all the simple closed curves $\alpha\in\HH_3'$.
Then
$\HH_1'\cup\HH_2'\cup\CCC_n$ with the transverse orientations and the labeling defined so far
determines a label-reading map  $\psi:\pi_1(D(S))\rightarrow A(\Gamma)$. Note that two curves or arcs of the same
label are not necessarily disjoint in this label-reading pair (Figure~\ref{fig:twist}\;(b)). 
From the construction, $\psi = p_n\circ\phi'$.

From Lemma~\ref{lem:equiv} (3), one immediately sees that
$\psi$ is equivalent to the label-reading map with respect to $(T^{-n}(\HH_1'\cup\HH_2'),\lambda'\circ T^n)$ (Figure~\ref{fig:twist}\;(c)).
Therefore, 
$p_n\circ \phi'(x)$ is equivalent to
$\phi\circ (q\circ T^n)_*(x)$. The claim is proved.

Now suppose $x$ is a non-trivial element in $\pi_1(D(S))$.
By Lemma~\ref{lem:evinj}, there exists $n\ge0$ such that $(q\circ T^n)_*(x) = (q\circ T_1^n\circ T_2^n\circ\cdots T_m^n)_*(x)\ne1$.
The injectivity of $\phi$ and the commutativity of the diagram above imply that $\phi'(x)$ is non-trivial. 
Hence, $\phi'$ is injective, and so, $\Gamma^*\not\in\NNN$.
 \end{proof}

\begin{figure}[htb!] 
\subfigure[$\HH' = \HH'_1\cup\HH'_2\cup\HH'_3$]{
 \labellist 
\small\hair 2pt 
\pinlabel {$\scriptstyle A_i$} [l] at 53 88
\pinlabel {$\leftarrow$} [l] at 6 8
\pinlabel {$\scriptstyle b$} [l] at 10 1
\pinlabel {$\scriptstyle a$} [l] at 22 43
\pinlabel {$\scriptstyle v_{K,a}$} [l] at 44 63
\pinlabel {$\scriptstyle v_{K,b}$} [l] at 79 62
\pinlabel {$\rightarrow$} [l] at 13 37
\pinlabel {$\leftarrow$} [l] at 33 61
\pinlabel {$\rightarrow$} [l] at 68 61
\pinlabel {$\scriptstyle\partial_i S$} [lb] at 28 20
\pinlabel {$\scriptstyle\partial_i S'$} [rb] at 87 20
\endlabellist 
\centering 
\includegraphics{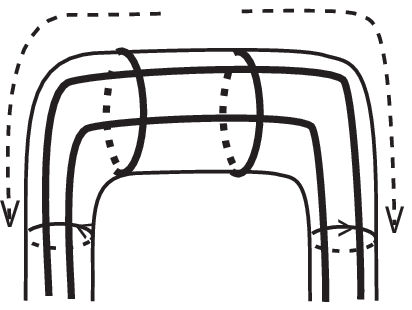}
}
\subfigure[$\HH'_1\cup\HH'_2\cup\CCC_n$]{
 \labellist 
\small\hair 2pt 
\pinlabel {$\leftarrow$} [l] at 7 10
\pinlabel {$\scriptstyle b$} [l] at 10 1
\pinlabel {$\scriptstyle a$} [l] at 21 41
\pinlabel {$\scriptstyle a$} [lb] at 26 80
\pinlabel {$\scriptstyle a$} [lb] at 43 80
\pinlabel {$\scriptstyle b$} [lb] at 60 80
\pinlabel {$\scriptstyle b$} [lb] at 79 80
\pinlabel {$\rightarrow$} [l] at 13 34
\pinlabel {$\leftarrow$} [l] at 30 61
\pinlabel {$\leftarrow$} [l] at 46 61
\pinlabel {$\rightarrow$} [l] at 66 61
\pinlabel {$\rightarrow$} [l] at 83 61
\pinlabel {$\scriptstyle\partial_i S$} [lb] at 28 20
\pinlabel {$\scriptstyle\partial_i S'$} [rb] at 87 20
\endlabellist 
\centering 
\includegraphics{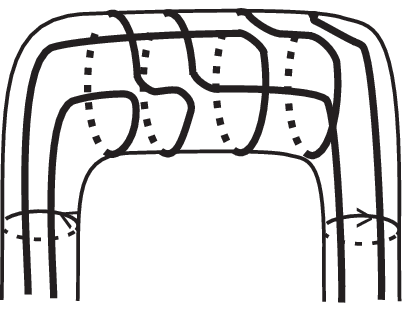}
}
\subfigure[$T^{-n}(\HH_1'\cup\HH_2')$]{
 \labellist 
\small\hair 2pt 
\pinlabel {$\scriptstyle b$} [l] at 7  2
\pinlabel {$\scriptstyle a$} [l] at 19 42
\pinlabel {$\rightarrow$} [l] at 10 37
\pinlabel {$\leftarrow$} [l] at 4 10
\pinlabel {$\scriptstyle\partial_i S$} [lb] at 27 23
\pinlabel {$\scriptstyle\partial_i S'$} [rb] at 87 20
\endlabellist 
\centering 
\includegraphics{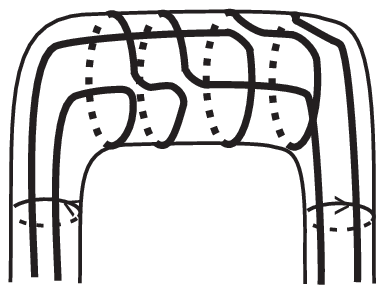}
}
\caption[Defining a label-reading pair on the double of a surface]{Defining a label-reading pair $(\HH',\lambda')$ on $D(S)$. 
(b) and (c) show equivalent label-reading pairs.  \label{fig:twist}}
\end{figure}

\begin{rem}\label{rem:promotion}
Lemma~\ref{lem:evinj} is interesting in its own right, and naturally used to prove Lemma~\ref{lem:promotion}.
One can also see that $\phi'$ is injective in the above proof by a method from~\cite{CSS2008}, 
rather than resorting to Lemma~\ref{lem:evinj}, as is briefly discussed below.
The labels of the curves inside 
two different annuli $A_i$ and $A_j$ are either simplicial vertices joined to the same maximal complete subgraph of $\Gamma$,
or disjoint and independent.
Now suppose $\phi'_*[\alpha]=1$, for some $[\alpha]\in\pi_1(D(S))\setminus\{1\}$.
By the solution to the word problem for right-angled Artin groups, $\alpha$ has an essential subarc $\beta$ (in $S$ or $S'$)
satisfying the following: $\beta$ intersects  
boundary components $\partial_i S$ and $\partial_j S$, such that 
the curves inside $A_i$ and $A_j$ are labeled by simplicial vertices joined to the same maximal complete subgraph $K$ of $\Gamma$
and
the label-reading by $(\HH,\lambda)$ of $\beta$ is in $A(K)$ (note that $i=j$ may occur).
This implies that any arc intersecting with $\partial_i S$ or $\partial_j S$ is labeled by a vertex of $K$.
Orient $\beta$ from $\partial_i S$ to $\partial_j S$. Then the label-reading by $(\HH,\lambda)$ of 
$\beta\cdot\partial_j S\cdot\beta^{-1}$ and $\partial_i S$
are both in $A(K)$, and so, commute. This will contradict to the injectivity of $\phi$.
\end{rem}

\begin{thm}\label{thm:completeamalgam}
$\NNN'$ is closed under complete graph amalgamation.
\end{thm}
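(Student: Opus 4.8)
The plan is to combine the two key tools developed in the excerpt: Lemma~\ref{lem:promotion} (the ``promotion'' of a relative embedding into $A(\Gamma^*)$), Lemma~\ref{lem:simplicialv} (removing independent simplicial vertices preserves membership in $\NNN'$), and the graph-theoretic decomposition in Lemma~\ref{lem:simplicial_extension}, together with Conjecture~\ref{conj:nclosedkn}'s \emph{proved} analogue for $\NNN$ — wait, that is only a conjecture, so instead I will use the fact that $\NNN$ \emph{is} not known to be closed, and route everything through $\NNN'$. Let me restate: suppose $\Gamma = \Gamma_1 \cup \Gamma_2$ with $K = \Gamma_1 \cap \Gamma_2$ complete, and suppose $\Gamma_1, \Gamma_2 \in \NNN'$; I want to conclude $\Gamma \in \NNN'$.

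First I would argue by contradiction: assume $\Gamma \notin \NNN'$. By Lemma~\ref{lem:promotion}, this is equivalent to $\Gamma^* \notin \NNN$, i.e.\ there is a closed hyperbolic surface group embedded in $A(\Gamma^*)$. Now invoke Lemma~\ref{lem:simplicial_extension}: since $\Gamma$ is a complete graph amalgamation of $\Gamma_1$ and $\Gamma_2$, the simplicial extension $\Gamma^*$ is a complete graph amalgamation of graphs $\Gamma_1'$ and $\Gamma_2'$ with $\Gamma_i \le \Gamma_i' \le \Gamma_i^*$ and $V(\Gamma_i') \setminus V(\Gamma_i)$ an independent set of simplicial vertices of $\Gamma_i'$. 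Since $\Gamma_i \in \NNN'$ by hypothesis, Lemma~\ref{lem:simplicialv} (adding independent simplicial vertices preserves $\NNN'$) gives $\Gamma_i' \in \NNN'$, and a fortiori $\Gamma_i' \in \NNN$. So we have reduced to the following statement, which is exactly Lemma~\ref{lem:nni} run in reverse: if $\Gamma^* = \Gamma_1' \cup \Gamma_2'$ is a complete graph amalgamation with $\Gamma_1', \Gamma_2' \in \NNN'$, then $\Gamma^* \in \NNN$. But Lemma~\ref{lem:nni} tells us that if $\Gamma^* \notin \NNN$ then $\Gamma_i' \notin \NNN'$ for $i=1$ or $2$ — contradicting $\Gamma_1', \Gamma_2' \in \NNN'$. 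Hence $\Gamma^* \in \NNN$, so by Lemma~\ref{lem:promotion} again $\Gamma \in \NNN'$, completing the proof.

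The one subtlety I need to verify carefully is that Lemma~\ref{lem:nni}'s transversality argument genuinely applies to $\Gamma^*$ and its decomposition $\Gamma_1' \cup \Gamma_2'$ — that is, that $A(\Gamma^*)$ really is the amalgamated free product $A(\Gamma_1') *_{A(\Gamma_1' \cap \Gamma_2')} A(\Gamma_2')$ over the free abelian subgroup on the complete graph $\Gamma_1' \cap \Gamma_2'$. This is standard for right-angled Artin groups along complete-graph amalgamations (it follows from the normal form / the fact that $X_{\Gamma^*}$ decomposes as $X_{\Gamma_1'} \cup_{X_{\Gamma_1' \cap \Gamma_2'}} X_{\Gamma_2'}$), and the paragraph preceding Definition~\ref{defn:relative} already spells this out for a general complete-graph amalgamation, so it applies verbatim with $\Gamma^*, \Gamma_1', \Gamma_2'$ in place of $\Gamma, \Gamma_1, \Gamma_2$. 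The main obstacle, such as it is, is bookkeeping: making sure the chain of equivalences $\Gamma \in \NNN' \iff \Gamma^* \in \NNN$ is used in the correct direction at each end and that the intermediate graphs $\Gamma_i'$ land in $\NNN'$ before we are allowed to quote Lemma~\ref{lem:nni}. There is no new geometric input needed here; all the real work was done in Lemma~\ref{lem:promotion} (via Lemma~\ref{lem:evinj}) and Lemma~\ref{lem:simplicial_extension}. I would therefore write the proof as a short four-line deduction assembling these pieces, with a sentence recalling why $A(\Gamma^*)$ splits as the relevant amalgam.

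Corollary~\ref{cor:chordal} then follows immediately: $K_n \in \NNN'$ (since $A(K_n) \cong \Z^n$ has no nonabelian free subgroup, hence no closed hyperbolic surface subgroup and in particular no relative embedding of one), and Dirac's theorem writes any chordal graph as an iterated complete-graph amalgamation of complete graphs, so induction on the number of amalgamations, using Theorem~\ref{thm:completeamalgam} at each step, places every chordal graph in $\NNN'$.
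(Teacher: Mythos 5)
Your proof is correct and is essentially the paper's own argument, differing only in framing: you run a contradiction (``assume $\Gamma\notin\NNN'$ and $\Gamma_1,\Gamma_2\in\NNN'$, push $\Gamma_i\in\NNN'$ forward through Lemma~\ref{lem:simplicialv} to get $\Gamma_i'\in\NNN'$, then contradict Lemma~\ref{lem:nni}''), whereas the paper proves the contrapositive directly (``$\Gamma\notin\NNN'\Rightarrow\Gamma^*\notin\NNN\Rightarrow\Gamma_1'\notin\NNN'\Rightarrow\Gamma_1\notin\NNN'$''), using Lemma~\ref{lem:simplicialv} in the contrapositive direction at the last step. The chain of lemmas invoked --- promotion, simplicial extension decomposition, Lemma~\ref{lem:nni}, Lemma~\ref{lem:simplicialv} --- is identical, and your remark that the transversality splitting applies verbatim to $\Gamma^*=\Gamma_1'\cup_{K'}\Gamma_2'$ is the right thing to observe and is indeed covered by the general discussion preceding Definition~\ref{defn:relative}.
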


\begin{proof}
Let $\Gamma,\Gamma_1$ and $\Gamma_2$ be graphs, such that 
$\Gamma$ is a complete graph amalgamation of $\Gamma_1$ and $\Gamma_2$.
We will show that  $\Gamma\in\NNN'$ if and only if $\Gamma_1,\Gamma_2\in\NNN'$.

($\Rightarrow$) Obvious, since $\Gamma_i\le\Gamma$. 

($\Leftarrow$) Assume $\Gamma\not\in\NNN'$.
By Lemma~\ref{lem:promotion}, $\Gamma^\ast\not\in \NNN$. 
$\Gamma^*$ is a complete graph amalgamation of induced subgraphs $\Gamma_1'\ge\Gamma_1$ and $\Gamma_2'\ge\Gamma_2$, as in Lemma~\ref{lem:simplicial_extension}. 
By Lemma~\ref{lem:nni}, we may assume $\Gamma_1'\not\in\NNN'$. 
Since
$\Gamma_1'$ can be obtained by adding independent simplicial vertices to $\Gamma_1$,
Lemma~\ref{lem:simplicialv} implies
 $\Gamma_1\not\in\NNN'$
 \end{proof}

\begin{cor} \label{cor:chordal}
Any chordal graph is in $\NNN'$.
\end{cor}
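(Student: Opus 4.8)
The plan is to run an induction on $|V(\Gamma)|$, feeding the inductive step into Theorem~\ref{thm:completeamalgam}; the only non-formal ingredient is Dirac's structure theorem for chordal graphs~\cite{dirac1961}.

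For the base of the induction, note that complete graphs lie in $\NNN'$: $A(K_n)\cong\Z^n$ contains no non-abelian free subgroup, hence no compact hyperbolic surface subgroup, so no relative embedding of a compact hyperbolic surface group into $A(K_n)$ exists, as already remarked after Proposition~\ref{prop:join}. In particular $K_0=\varnothing$ and all $K_n$ belong to $\NNN'$.

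For the inductive step, let $\Gamma$ be chordal with $|V(\Gamma)|\ge 2$. If $\Gamma$ is complete we are done by the base case, so assume it is not. By Dirac's theorem a non-complete chordal graph has a vertex separator $T$ with $\Gamma_T$ complete; concretely, picking one connected component $C$ of $\Gamma\setminus\Gamma_T$, set $\Gamma_1=\Gamma_{V(C)\cup T}$ and $\Gamma_2=\Gamma_{V(\Gamma)\setminus V(C)}$. Then $\Gamma=\Gamma_1\cup\Gamma_2$, both $\Gamma_1$ and $\Gamma_2$ are induced subgraphs, $\Gamma_1\cap\Gamma_2=\Gamma_T$ is complete, and since $T$ separates, neither $\Gamma_i$ equals $\Gamma$, so $|V(\Gamma_i)|<|V(\Gamma)|$. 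Moreover each $\Gamma_i$ is chordal, since an induced cycle of length $\ge 4$ in $\Gamma_i$ would be an induced cycle of the same length in $\Gamma$. By the induction hypothesis $\Gamma_1,\Gamma_2\in\NNN'$, and as $\Gamma$ is a complete graph amalgamation of $\Gamma_1$ and $\Gamma_2$, Theorem~\ref{thm:completeamalgam} yields $\Gamma\in\NNN'$.

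An essentially equivalent route runs the induction through a perfect elimination ordering: a non-complete chordal $\Gamma$ has a simplicial vertex $v$ whose closed star is a complete subgraph meeting $\Gamma\setminus\{v\}$ in the complete graph induced by $\link(v)$, exhibiting $\Gamma$ as the complete graph amalgamation of $\Gamma\setminus\{v\}$ with this star; here one could even invoke Lemma~\ref{lem:simplicialv} directly in place of Theorem~\ref{thm:completeamalgam}. I do not expect any real obstacle: all of the analytic content has been absorbed into Theorem~\ref{thm:completeamalgam} (via Lemma~\ref{lem:promotion} and the simplicial-extension machinery), and what remains is purely the combinatorial decomposition of chordal graphs, for which the only point needing care is verifying that the two pieces of the Dirac decomposition are genuinely smaller induced subgraphs, which is immediate.
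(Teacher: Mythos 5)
Your proof is correct and follows essentially the same route as the paper: base the induction on $K_n\in\NNN'$, invoke Dirac's theorem to write a non-complete chordal graph as a complete graph amalgamation of two smaller chordal induced subgraphs, and close the induction with Theorem~\ref{thm:completeamalgam}. The paper states this more tersely but the argument is the same; your alternative via a simplicial vertex together with Lemma~\ref{lem:simplicialv} is also valid and is essentially the move used in Proposition~\ref{prop:nniequiv}.
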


\begin{proof}
Note that each complete graph is in $\NNN'$, since free abelian groups do not contain closed hyperbolic
surface group or non-abelian free groups.  
For each chordal graph $\Gamma$, either $\Gamma$ is complete or $\Gamma$ can be written as a complete graph amalgamation
$\Gamma = \Gamma_1 \cup \Gamma_2$ of proper induced subgraphs $\Gamma_1$ and 
$\Gamma_2$~\cite{dirac1961}.
By Theorem~\ref{thm:completeamalgam}, an inductive argument shows that $\Gamma$ is in $\NNN'$.
 \end{proof}

 In particular, $A(\Gamma)$ does not contain a closed hyperbolic surface group if $\Gamma$ is chordal.
The condition that the underlying graph $\Gamma$ is chordal
is equivalent to two important group theoretic properties on $A(\Gamma)$. Namely,
$\Gamma$ is chordal, if and only if 
  $A(\Gamma)$ is coherent~\cite{droms1987a}, if and only if
  $A(\Gamma)$ has a free commutator subgroup~\cite{SDS1989}.
 
\section{Normalized Label-Reading Pairs} \label{sec:labelreading}
In this section, we let $\Gamma$ be a graph and $S$ be a compact surface.
For a given label-reading pair on $S$ with the underlying graph $\Gamma$, 
we will consider a simplification (called, {\em normalization})
of the label-reading pair, without changing the equivalence class of
the induced label-reading map (Definition~\ref{defn:norm}). 
Lemma~\ref{lem:norm1} and~\ref{lem:norm2} will be crucially used in Section~\ref{sec:bisimp}.

\bd[regular label-reading pair]\label{defn:regular}
A  label-reading pair $(\HH,\lambda)$ on $S$ with the underlying graph $\Gamma$ 
is called {\em regular}, if the following are satisfied.
\enumir
\be
\item
The induced label-reading map $\phi:\pi_1(S)\rightarrow A(\Gamma)$ is injective.
\item
The curves and the arcs in $\HH$ are 
neither null-homotopic nor homotopic into the boundary.
\item
Any curves and arcs in $\HH$ are minimally intersecting. This means that for any $\alpha\ne\beta$ in $\HH$, 
$| \alpha\cap\beta | = i(\alpha,\beta)$.
\item
Two curves or arcs of the same label do not intersect.
\item
For each boundary component 
 $\partial_i S$,
 there exists a complete subgraph $K\le \Gamma$
 such that any arc $\alpha$ intersecting with $\partial_i S$ satisfies $\lambda(\alpha)\in V(K)$.
 \ee
\enumia
\ed

From Proposition~\ref{prop:lrexists} and~\ref{lem:equiv}, 
any relative embedding $\phi:\pi_1(S)\rightarrow A(\Gamma)$ 
is induced by a regular label-reading pair $(\HH,\lambda)$, possibly after a conjugation in $A(\Gamma)$.
This is the first step to simplify a given label-reading pair. 

\bd[normalized label-reading pair] \label{defn:norm}
\enumia
\be
\item
Let $(\HH,\lambda)$ be a regular label-reading pair on a hyperbolic
surface $S$, 
and $\BBB$ be the set of properly embedded arcs in $\HH$.
Define the {\em complexity of $\HH$} to be the 4-tuple of the nonnegative intergers
\[
c(\HH,\lambda)=\left(|(\cup\BBB)\cap\partial S|,\sum_{a\in V(\Gamma)} |\lambda^{-1}(a) \big/ \sim|,
\sum_{a\in V(\Gamma)} |\lambda^{-1}(a)\cap\BBB \big/ \sim|,
\sum_{{\alpha,\beta \in\HH} \atop { \alpha\ne\beta}} |\alpha\cap\beta| \right)\]
where
 $\sim$ denotes the homotopy equivalence relation on $\HH$, also and on $\BBB$.
We denote the lexicographical ordering of 
the complexities by  $\preceq$.
\item
A regular label-reading pair 
$(\HH,\lambda)$ is {\em normalized} if for any other
regular label-reading pair $(\HH',\lambda')$ which is equivalent to $(\HH,\lambda)$,
$c(\HH,\lambda)\preceq c(\HH',\lambda')$.
\ee
\end{defn}

In the above definition, $ |(\cup\BBB)\cap\partial S| $ denotes the number of intersection points between $\partial S$ 
and the arcs in $\BBB$. This means, $ | (\cup\BBB)\cap\partial S| = | (\cup\HH) \cap \partial S|$ is the number of the endpoints of 
arcs in $\BBB$. 
 It is obvious that any regular label-reading pair is equivalent to a normalized one. 
We start with a simple observation on normalized label-reading pairs.

\begin{lem}[Normalization I] \label{lem:norm1}
Let  $(\HH,\lambda)$ be a normalized label-reading pair on a compact hyperbolic surface $S$ with the underlying graph $\Gamma$.
$\BBB$ denotes the set of properly embedded arcs in $\HH$.
 If   $\alpha,\beta\in \BBB$ have the same label and intersect with
the same boundary component $\partial_i S$, then the transverse orientation of
$\alpha$ and that of $\beta$ induce the same orientation on  $\partial_i S$ at their intersections with
$\partial_i S$.
In particular, each properly embedded arc in $\HH$ intersects with two distinct boundary components of $S$.
\end{lem}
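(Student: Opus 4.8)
The plan is to argue by contradiction, using a band surgery performed along $\partial_i S$ that strictly lowers the first coordinate $|(\cup\BBB)\cap\partial S|$ of the complexity $c(\HH,\lambda)$ — which, since $(\HH,\lambda)$ is normalized, is impossible. I would treat two closely related cases with the same construction. First, suppose some arc $\alpha\in\BBB$, say with label $a$, has both of its endpoints on one component $\partial_i S$. Choose a sub-arc $\delta\subseteq\partial_i S$ joining the two feet of $\alpha$ that meets no other endpoint of an $a$--labelled arc, and let $D$ be a thin embedded disk in $S$ with $\partial D=\delta\cup\alpha_0\cup\gamma\cup\alpha_0'$, where $\alpha_0,\alpha_0'\subseteq\alpha$ are short sub-arcs at the two feet and $\gamma$ is a new arc in the interior. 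The surgery replaces $\alpha$ by $\alpha':=(\alpha\setminus(\alpha_0\cup\alpha_0'))\cup\gamma$, which is a closed curve, still labelled $a$. Since $S$ is oriented, $\alpha'$ is two-sided, so the transverse orientation it inherits from $\alpha\setminus(\alpha_0\cup\alpha_0')$ extends consistently over $\gamma$ with no compatibility condition; in effect this shows that the two orientations induced on $\partial_i S$ at the two feet of $\alpha$ are opposite.

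Second, suppose $\alpha,\beta\in\BBB$ are distinct, have the same label $a$, both meet $\partial_i S$, and induce opposite orientations on $\partial_i S$ there. By regularity $\alpha$ and $\beta$ are disjoint, and I may choose them so that a sub-arc $\delta\subseteq\partial_i S$ joining a foot $p$ of $\alpha$ to a foot $q$ of $\beta$ meets no other endpoint of an $a$--labelled arc; indeed, if no two consecutive $a$--labelled feet on $\partial_i S$ induced opposite orientations then all of them would induce the same orientation and there would be nothing to prove. With a thin band $D$ as before (sides $\delta$, $\alpha_0\subseteq\alpha$, $\beta_0\subseteq\beta$, $\gamma$), the surgery replaces $\alpha\cup\beta$ by the single arc $\alpha':=(\alpha\setminus\alpha_0)\cup\gamma\cup(\beta\setminus\beta_0)$. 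A short local computation in a collar of $\delta$ is the crux of this case: $\alpha'$ carries a transverse orientation restricting to that of $\alpha$ on $\alpha\setminus\alpha_0$, and one checks that it also restricts to that of $\beta$ on $\beta\setminus\beta_0$ \emph{exactly} when the orientations induced on $\partial_i S$ by $\alpha$ and $\beta$ are opposite — in the ``same'' case the two would disagree, which is precisely why that case admits no surgery. In either of the two cases, the resulting transversely oriented labelled system $(\HH',\lambda')$ has underlying graph $\Gamma$ and is again a label-reading pair, since the only new intersections are between $\gamma$ (label $a$) and curves or arcs meeting the thin band $D$, and any such curve or arc either has an endpoint on $\delta$ — hence is labelled in the complete subgraph $K$ that the relative-embedding normalization attaches to $\partial_i S$, with $a\in V(K)$ — or crosses $\alpha$ or $\beta$; either way its label commutes with $a$. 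In particular no other $a$--labelled component meets $D$, so regularity condition (4) is preserved.

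Finally I would check that $(\HH',\lambda')$ is equivalent to $(\HH,\lambda)$ and strictly simpler. For equivalence, homotope each based loop of $S$ off the open disk $D$; outside $D$ the systems $\HH$ and $\HH'$ coincide as transversely oriented labelled $1$--manifolds, so a loop reads the same word with respect to either, and by Remark~\ref{rem:reduction} the element of $A(\Gamma)$ read off a loop does not depend on the chosen transverse representative. Thus the two label-reading maps agree up to conjugation, so $(\HH',\lambda')$ is equivalent to $(\HH,\lambda)$ and its induced map is injective. The surgery deletes the feet of the arcs on $\partial_i S$ and creates no new ones, so $|(\cup\BBB')\cap\partial S|=|(\cup\BBB)\cap\partial S|-2$; and the moves of Lemma~\ref{lem:equiv} needed to restore full regularity (deleting null-homotopic or boundary-parallel components, removing bigons) never increase this quantity. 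This yields a regular label-reading pair equivalent to $(\HH,\lambda)$ of strictly smaller complexity, contradicting normalization and proving both the statement and the ``in particular''. The step I expect to be the main obstacle is the orientation bookkeeping in the band surgery: pinning down why the transverse orientations of $\alpha$ and $\beta$ glue to a consistent transverse orientation of the surgered curve exactly in the ``opposite'' case, and why pushing a loop across the band leaves the label-reading map unchanged — the latter using simultaneously the defining commutation property of a label-reading pair and the relative-embedding normalization of the arcs meeting $\partial_i S$.
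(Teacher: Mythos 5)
Your proposal is correct and uses the same essential idea as the paper: a band surgery along $\partial_i S$ that merges two $a$--labelled feet of opposite induced orientation, strictly decreasing the first coordinate $|(\cup\BBB)\cap\partial S|$ of the complexity, contradicting normalization. The paper carries this out in a single unified step: it picks a \emph{nearest} pair $\{P_\alpha,P_\beta\}$ of $a$--feet on $\partial_i S$ with opposite induced orientations (these may or may not lie on the same arc), so that one component of $\partial_i S\setminus\{P_\alpha,P_\beta\}$ is free of $a$--feet, and then performs the surgery of Figure~9 and cites Lemma~\ref{lem:equiv} to restore regularity. Your two-case split is a harmless reorganization, but Case~1 as written has a small imprecision: you ``choose a sub-arc $\delta\subseteq\partial_i S$ joining the two feet of $\alpha$ that meets no other endpoint of an $a$--labelled arc,'' yet both complementary arcs of $\partial_i S\setminus\partial\alpha$ could contain $a$--feet, in which case no such $\delta$ exists. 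This is repaired exactly as the paper does and as your Case~2 already does: take a nearest pair of opposite-orientation $a$--feet, which may be the two feet of a single arc (giving a closed curve after surgery) or feet of two distinct arcs (giving an arc). Beyond that, your bookkeeping — that every new intersection with $\gamma$ is against a component whose label commutes with $a$ (either via an endpoint on $\delta$ and regularity~(5), or via a crossing with $\alpha$ or $\beta$), that no other $a$--labelled component meets the band so regularity~(4) survives, that the label-reading map is unchanged because loops can be pushed off the disk $D$, and that the Lemma~\ref{lem:equiv} moves used to restore regularity never increase $|(\cup\BBB)\cap\partial S|$ — is precisely what the paper's Figure~9 and citation of Lemma~\ref{lem:equiv} compress; you have in fact spelled out more of this than the paper does.
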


\begin{proof} 
Let $a= \lambda(\alpha) = \lambda(\beta)$.
Suppose the transverse orientations of $\alpha$ and $\beta$ do not induce the 
same orientation on $\partial_i S$ at their intersection points $\{P_\alpha,P_\beta\}$.
By choosing a nearest one among such pairs of intersection points on $\partial_i S$,
we may assume a component of $\partial_i S \setminus \{P_\alpha,P_\beta\}$
does not intersect with any $a$--arc (Figure~\ref{fig:norm1}\;(a)). 
By Lemma~\ref{lem:equiv},  one can reduce 
 $|(\cup\BBB)\cap\partial S|$ by 2 without changing the equivalence class of $(\HH,\lambda)$, if
 one replaces $\alpha$ and $\beta$
 by another curve or arc $\alpha'$ as in Figure~\ref{fig:norm1}\;(b).
 Note that this new label-reading pair can be further simplified to become regular, again by Lemma~\ref{lem:equiv}.
  \end{proof}

\begin{rem}
During the proof of Lemma~\ref{lem:norm1}, one might have increased the number of homotopy classes of simple closed curves in $\lambda^{-1}(a)$, when $\alpha$ and $\beta$ are replaced by $\alpha'$. But the proof is still valid, since we are considering the lexicographical ordering of the complexity.
\end{rem}

\begin{figure}[htb!] 
\centering
\subfigure[]{
\labellist 
\small\hair 2pt 
\pinlabel {$\scriptstyle \partial_i S$}   at 21 22
\pinlabel {$\scriptstyle S$} [rb] at 5 41
\pinlabel {$\scriptstyle P_\alpha$} [b] at 35 42
\pinlabel {$\scriptstyle P_\beta$} [t] at 45 11
\pinlabel {$\scriptstyle a$} [rb] at 63 49
\pinlabel {$\searrow$} [tl] at 61 51
\pinlabel {$\scriptstyle b$} [b] at 88 36 
\pinlabel {$\uparrow$} [t] at 83 42
\pinlabel {$\scriptstyle c$} [b] at 71 25
\pinlabel {$\uparrow$} [t] at 66 30
\pinlabel {$\scriptstyle \alpha$} [lb] at 85 56
\pinlabel {$\scriptstyle \beta$} [t] at 87 1
\pinlabel {$\scriptstyle a$} [tr] at 63 6
\pinlabel {$\nearrow$} [bl] at 61 4
\endlabellist
\centering
\includegraphics{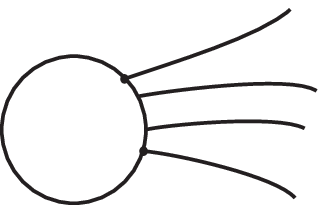}
}
\hspace{0.8in}
\subfigure[]{
\labellist 
\small\hair 2pt 
\pinlabel {$\scriptstyle \partial_i S$}   at 21 22
\pinlabel {$\scriptstyle S$} [rb] at 4 40
\pinlabel {$\scriptstyle a$} [rb] at 63 49
\pinlabel {$\searrow$} [tl] at 61 51
\pinlabel {$\scriptstyle b$} [b] at 88 36 
\pinlabel {$\uparrow$} [t] at 83 40
\pinlabel {$\scriptstyle c$} [b] at 71 25
\pinlabel {$\uparrow$} [t] at 66 29
\pinlabel {$\scriptstyle \alpha'$} [lb] at 85 56
\pinlabel {$\scriptstyle a$} [tr] at 63 6
\pinlabel {$\nearrow$} [bl] at 61 4
\endlabellist
\centering
\includegraphics{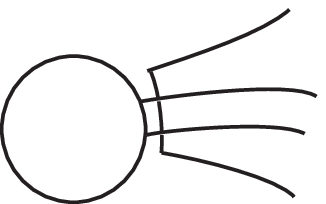}
}
\caption{Reducing  complexity.
In (a), the labels $b$ and $c$ are adjacent to $a$ in $\Gamma$ by the regularity of $(\HH,\lambda)$. 
Hence in (b), the intersections of $\alpha'$  with $b$--and $c$--arcs are allowed to occur.
Consequently, $\alpha$ and $\beta$ can be replaced by $\alpha'$ without changing the equivalence class of the label-reading pair.}
\label{fig:norm1}
\end{figure}

To state properties of normalized label-reading pairs, 
it will be convenient to define certain terms regarding a set of disjoint properly embedded arcs on $S$, as follows. Let $I$ denote the unit interval $[0,1]$.

\enumiir
\bd[Strips and Channels] \label{defn:stripchan}
Let $\AAA$ be a set of disjoint properly embedded arcs on a compact surface $S$.
\be
\item
We choose an embedding $\eta_{\alpha}:I\times[-1,1]\rightarrow S$
for each arc $\alpha\in\AAA$, such that the following conditions hold.
\be
\item
$\eta_{\alpha}(I\times s)$ is 
a properly embedded arc for each $s\in[-1,1]$.
\item
$\alpha\subseteq \eta_\alpha(I\times(-1,1))$
\item
If $\alpha\sim\alpha'\in\AAA$, then
$\eta_\alpha = \eta_{\alpha'}$.
\item
If $\alpha\not\sim\alpha'\in\AAA$, then
the image of $\eta_\alpha$ and that of $\eta_{\alpha'}$ are disjoint.
\ee
We call $\{\eta_\alpha\;|\; \alpha\in\AAA\}$
as a {\em set of strips} for $\AAA$.
For convenience, the image of $\eta_\alpha$ is also denoted by $\eta_\alpha$,
when there is no danger of confusion.
\item
A {\em channel} is a connected component of 
$(\cup_{\alpha\in\AAA}\thinspace\eta_\alpha)\cup\partial S$.
For $\alpha\in\AAA$,
we denote the unique channel containing $\alpha$ by $\ch(\alpha)$.
An {\em induced simple closed curve} of $\alpha$ is
a boundary component $\hat\alpha$ of the closure of $S\setminus\ch(\alpha)$ such that
$\hat\alpha\cap\eta_\alpha\ne\varnothing$ and
$\hat\alpha\not\subseteq\partial S$.
Note that there exist at most two induced simple closed curves of $\alpha$ for each $\alpha\in\AAA$.
\item
An arc $\alpha\in\AAA$ is {\em one-sided with respect to $\AAA$},
if $\eta_\alpha(I\times\{-1\})$ and $\eta_\alpha(I\times\{1\})$ 
are contained in the same induced simple closed curve  (see Figure~\ref{fig:1sided}).
\ee
\ed

\begin{figure}[bh!] 
\centering
\subfigure[$\alpha$ is not one-sided]{
\labellist 
\small\hair 2pt 
\pinlabel {${}_{S}$} [l] at 5 100
\pinlabel {${}_{\alpha}$} [l] at 40 62
\pinlabel {${}_{\hat\alpha}$} [l] at 57 83
\pinlabel {${}_{\hat\alpha'}$} [l] at 60 27
\pinlabel {${}_{\partial_1 S'}$} [b] at 16 50
\pinlabel {${}_{\partial_2 S'}$} [b] at 67 50 
\pinlabel {${}_{\partial_3 S'}$} [b] at 112 80 
\pinlabel {${}_{\partial_4 S'}$} [b] at  110 23
\endlabellist
\centering
\includegraphics{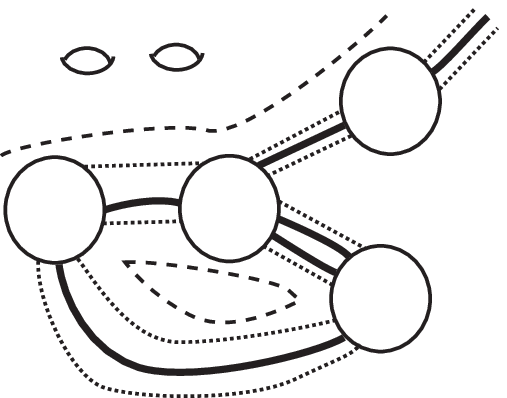}
}
\hspace{0.2in}
\subfigure[$\alpha$ is one-sided]{
\labellist 
\small\hair 2pt 
\pinlabel {${}_{S}$} [l] at 5 95
\pinlabel {${}_{\alpha}$} [l] at 35 55
\pinlabel {${}_{\hat\alpha}$} [l] at 56 86
\pinlabel {${}_{\partial_1 S'}$} [b] at 16 44
\pinlabel {${}_{\partial_2 S'}$} [b] at 67 43 
\pinlabel {${}_{\partial_3 S'}$} [b] at 112 75 
\pinlabel {${}_{\partial_4 S'}$} [b] at  110 18
\endlabellist
\centering
\includegraphics{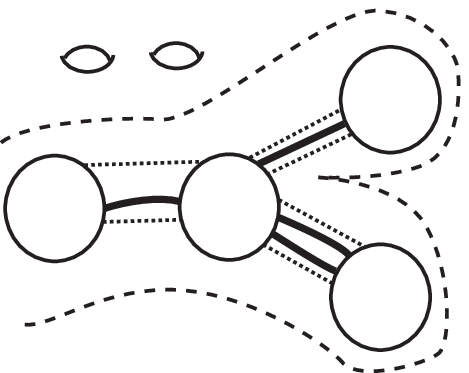}
}
\caption{Strips and channels of a set $\AAA$ of disjoint properly embedded arcs. 
Curves in $\AAA$ are drawn bold. The dotted arcs 
 bound strips, along with some intervals on $\partial S$. 
 $\hat\alpha$ and $\hat\alpha'$ are induced simple closed curves. They do intersect $\partial S$
 and the boundaries of strips,
 but for convenience of drawing, the figures show curves parallel to $\hat\alpha$ and $\hat\alpha'$, in the interior
 of the surface.
In (a), $\hat\alpha$ and $\hat\alpha'$ are distinct induced simple closed curves of $\alpha\in\AAA$.
In (b), $\hat\alpha$ is the unique induced simple closed curve of $\alpha$.
\label{fig:1sided}}
\end{figure}

\begin{rem}\label{rem:induced}
Let $\AAA$ be a set of disjoint properly embedded arcs on a compact surface $S$, and $\alpha\in\AAA$. 
\be
\item
Any induced simple closed curve $\hat\alpha$ can be written as a concatenation of paths
\[ \hat\alpha = \alpha'_1 \cdot \delta_1 \cdot \alpha'_2 \cdot \delta_2 \cdot\alpha'_3 \cdots \alpha'_r\]
such that
\enumir
\be
\item
$\alpha_1' \sim \alpha$,
\item
$\alpha_i'\sim\alpha_i$ for some $\alpha_i\in\AAA$, and $\alpha_i'$ is an interval on the boundary of the strip of $\alpha_i$,
\item
$\delta_i$ is an interval on a boundary component of $S$ that intersects with $\alpha_i'$ and $\alpha_{i+1}'$.
\ee
\enumia
In particular, an induced simple closed curve consists of subarcs which lie on $\partial S$
and the boundaries of strips.
Moreover, if $\alpha$ is not one-sided, 
 then the transverse orientation of $\alpha$  uniquely determines a transverse orientation of 
 $\hat\alpha$ that respects the homotopy $\alpha_1'\sim\alpha$.
\item
For a sufficiently small closed regular neighborhood $N$ of $\ch(\alpha)$,
there exist disjoint annuli $A_1,A_2,\ldots,A_r$ in the closure of $S\setminus\ch(\alpha)$,
such that $N = \ch(\alpha)\cup A_1\cup A_2\cup\cdots\cup A_r$.
The intersection of each $A_i$ with $\ch(\alpha)$ is an induced simple closed curve,
and conversely, any induced simple closed curve intersecting with $\ch(\alpha)$
 is a boundary component of some $A_i$.
\item
Let $(\HH,\lambda)$ be a label-reading pair on $S$ with the underlying graph $\Gamma$.
Consider any arc $\alpha\in\HH$, and let $a = \lambda(\alpha)$.
Denote the set of $a$--arcs by $\AAA_a$.
We may assume that the arcs in $\AAA_a$ are disjoint (Lemma~\ref{lem:equiv} (3)).
Then the {\em strip, the channel, and the induced simple closed curves} of $\alpha$
are defined to be those of $\alpha$ with respect to the set $\AAA_a$.
Furthermore, $\alpha$ is said to be {\em one-sided} if it is one-sided with respect to $\AAA_a$.
\ee
\end{rem}

\begin{lem}[Normalization II] \label{lem:norm2}
Let  $(\HH,\lambda)$ be a normalized label-reading pair on a compact hyperbolic surface $S$ with the underlying graph $\Gamma$.
Then  each arc $\alpha$ in $\HH$ is one-sided (see Remark~\ref{rem:induced} (3)).
\end{lem}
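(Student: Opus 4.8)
The plan is to argue by contradiction. Suppose $(\HH,\lambda)$ is normalized but some arc $\alpha\in\HH$, with label $a=\lambda(\alpha)$, is not one-sided. Fix a set of strips for the set $\AAA_a$ of $a$-arcs, which we may take pairwise disjoint by Lemma~\ref{lem:equiv}(3), so that $\alpha$ has two \emph{distinct} induced simple closed curves $\hat\alpha$ and $\hat\alpha'$, one containing $\eta_\alpha(I\times\{1\})$ and the other $\eta_\alpha(I\times\{-1\})$. Note first that $\alpha$ cannot be the only $a$-arc, since cutting $S$ along a single essential arc leaves $S$ connected and puts both sides of $\alpha$ on one induced curve. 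Next, by Remark~\ref{rem:induced}(1), each of $\hat\alpha$ and $\hat\alpha'$ is a concatenation of push-offs of $a$-arcs and of subarcs of $\partial S$, and the transverse orientation of $\alpha$ propagates to a consistent transverse orientation along each of them. Finally, by Lemma~\ref{lem:norm1}, $\alpha$ joins two distinct boundary components, and, for every boundary component $\partial_k S$, all endpoints of $a$-arcs on $\partial_k S$ induce one and the same orientation on $\partial_k S$. Tracing $\hat\alpha$ around, alternately along strip boundaries and along boundary arcs, this last fact controls how the successive local pictures fit together.

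The core of the proof will be to build from $(\HH,\lambda)$ an equivalent regular label-reading pair of strictly smaller complexity, contradicting normalization. The modification is supported in a regular neighborhood of $\ch(\alpha)$ (as in Remark~\ref{rem:induced}(2)). Because the two sides of $\eta_\alpha$ lie on genuinely different induced curves, one can re-route the $a$-arcs meeting $\ch(\alpha)$, together with the boundary arcs joining their endpoints, so as to remove the two endpoints of $\alpha$ from $\partial S$, thereby strictly decreasing the first coordinate $|(\cup\BBB)\cap\partial S|$ of $c(\HH,\lambda)$; one then re-regularizes using Lemma~\ref{lem:equiv}, which never raises the complexity. The compatibility of the induced orientations along $\partial S$ granted by Lemma~\ref{lem:norm1} is precisely what guarantees that the re-routed arcs carry consistent transverse orientations, so that the label-reading map is unchanged up to equivalence. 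Before this, a couple of degenerate subcases should be cleared away: if one of $\hat\alpha$, $\hat\alpha'$ is null-homotopic or homotopic into $\partial S$, then the disc or annulus it cobounds, after pushing the finitely many curves and arcs of $\HH$ meeting it out of the way via Lemma~\ref{lem:equiv}, exposes either a null-homotopic or boundary-parallel curve of $\HH$, or a boundary-parallel arc, or a reduction of $\sum_{\alpha\neq\beta}|\alpha\cap\beta|$ --- in each case contradicting regularity or minimality --- so one may assume both induced curves are essential and non-peripheral.

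I expect the main obstacle to be the bookkeeping in the surgery step: giving an explicit description of the re-routed channel and checking that (a) the output is still a label-reading pair, i.e.\ every crossing joins equal or adjacent labels --- here one uses that every boundary component met by $\ch(\alpha)$ has $a$ among the labels allowed there, and that any curve or arc entering $\ch(\alpha)$ must cross either an $a$-arc or one of those boundary components; (b) the transverse orientations extend consistently, which is exactly where Lemma~\ref{lem:norm1} enters; and (c) the induced map on $\pi_1(S)$ is unchanged up to conjugacy. A subsidiary point, needed to keep the surgery region under control, is to first push off --- by an innermost-disc argument again via Lemma~\ref{lem:equiv} --- those curves and arcs of $\HH$ that merely graze a strip without crossing the corresponding $a$-arc.
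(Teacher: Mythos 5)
Your outline (contradiction via a complexity reduction supported near $\ch(\alpha)$) is reasonable, and your preliminary observations — $\alpha$ is not the only $a$--arc, and the two sides of $\eta_\alpha$ lie on distinct induced curves — are correct. But the surgery you describe cannot work: you claim to strictly reduce the \emph{first} coordinate $|(\cup\BBB)\cap\partial S|$, i.e.\ the number of arc endpoints on $\partial S$, by re-routing $a$--arcs through $\ch(\alpha)$ so as to eliminate $\alpha$'s two endpoints. The only way to remove arc endpoints in this manner is to splice two $a$--arcs together at a boundary component — but this requires the two endpoints being joined to carry opposite induced orientations on $\partial_i S$, which is precisely what Lemma~\ref{lem:norm1} (Normalization I) has \emph{already ruled out} in a normalized pair. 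You invoke Lemma~\ref{lem:norm1} as if the common orientation helps the surgery, when in fact it obstructs the very splicing your plan depends on. So after normalization, the first coordinate is already locked.

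The paper's proof reduces a different coordinate. Starting from $\hat\alpha$ (one of the two induced simple closed curves), one inserts a cancelling pair of parallel $a$--\emph{curves} $\beta,\gamma$ just outside $\ch(\alpha)$ (an equivalence), then uses repeated moves of type Lemma~\ref{lem:equiv}(3) to slide $\alpha$ across the channel: the arcs $\alpha,\alpha_3,\alpha_5,\ldots$ appearing along $\hat\alpha$ are deleted and replaced by parallel copies of $\alpha_2,\alpha_4,\ldots$, while $\gamma$ survives as a closed $a$--curve. The number of arc endpoints $|(\cup\BBB)\cap\partial S|$ is \emph{unchanged}, and the number of homotopy classes of closed curves may go up by one; the decisive point is that the third coordinate, the number of homotopy classes of $a$--\emph{arcs}, drops by one. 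This is where ``not one-sided'' is used in an essential way: since $\alpha$ is not one-sided, none of $\alpha_2,\ldots,\alpha_r$ is homotopic to $\alpha$, so $\alpha$'s homotopy class disappears from $\BBB$ altogether after the slide. Your proposal never exploits this fact, and indeed it is not clear that it could, since that fact is about homotopy classes of arcs rather than about endpoint counts. So the gap is not merely ``bookkeeping'': the quantity you intend to decrease cannot decrease, and the quantity that actually decreases depends on the one-sidedness hypothesis in a way your argument does not touch.
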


\begin{proof} 
Let $a=\lambda(\alpha)$.
For abbreviation, we simply let $\AAA$ denote the set of $a$--arcs.
Suppose there exists an arc $\alpha\in\AAA$, 
which is not one-sided in $\AAA$.
Let $\hat\alpha$ be  one of the two induced simple closed curves  of $\alpha$ 
with respect to $\AAA$.
Write
$\hat\alpha = \alpha'_1 \cdot \delta_1 \cdot \alpha'_2 \cdot \delta_2\cdots
\alpha'_r\cdot \delta_r$,
where  $\delta_i\subseteq\partial S$, and
$\alpha'_i$ is a properly embedded arc homotopic to an $a$--arc $\alpha_i\in\AAA$, as in
Remark~\ref{rem:induced} (1). Here, $\alpha_1'\sim\alpha$.
 The transverse orientation of $\hat\alpha$ is given by that of $\alpha$. 

First,  consider the case when no other curve in $\AAA$ is  homotopic to $\alpha$.
Choose an embedding $g:S^1\times I\rightarrow \overline{S\setminus\ch(\alpha)}$ such that
$g(S^1\times\{0\})=\hat\alpha$, as in Remark~\ref{rem:induced} (2).
Put $\beta=g(S^1\times\{\frac12\})$
and $\gamma = g(S^1\times\{1\})$. One may assume $\gamma$ is sufficiently close 
to $\hat\alpha$, so that if any $\gamma'\in\HH$ intersects with $\gamma$, then $\gamma'$ also
intersects with $\hat\alpha$.
Let $\gamma$ have the transverse orientation which the homotopy $\gamma\sim\hat\alpha$ respects, 
and give $\beta$ the opposite orientation (Figure~\ref{fig:norm2}\;(a)). Label $\beta$ and $\gamma$
by $a$ and add them to $\HH$, 
resulting in a new label-reading pair $(\HH_1,\lambda_1)$. Note that $(\HH_1,\lambda_1)$
is equivalent to $(\HH,\lambda)$, since the homotopic curves $\beta$ and $\gamma$ have the same label
and the opposite transverse orientations.

Note that the transverse orientations of $\alpha_2,\alpha_3,\alpha_4,\alpha_5,\ldots$ 
are completely determined by that of $\alpha$, according to Lemma~\ref{lem:norm1}.
Obtain another label-reading pair $(\HH_2,\lambda_2)$ from $(\HH_1,\lambda_1)$
by  removing $\beta,\alpha,\alpha_3,\alpha_5,\ldots$ and adding $a$--arcs homotopic to
$\alpha_2,\alpha_4,\ldots$. Here, newly added arcs will have 
the transverse orientations respecting the homotopies to $\alpha_2,\alpha_4,\ldots$, as in
 Figure~\ref{fig:norm2}\;(c). 
One sees that $(\HH_2,\lambda_2)$ is equivalent to $(\HH_1,\lambda_1)$ 
by successive applications of Lemma~\ref{lem:equiv} (3). 
Figure~\ref{fig:norm2}\;(b) illustrates an intermediate step between 
$(\HH_1,\lambda_1)$ and $(\HH_2,\lambda_2)$.    

Since $\alpha$ is not one-sided, $\alpha_i\not\sim \alpha$ for each $i>1$. 
We have assumed that $\alpha$ is the only arc in its homotopy class, contained in $\AAA$.
Hence, $(\HH_2,\lambda_2)$ does not contain any $a$--arc homotopic to $\alpha$.
This means,
$\HH_2$ has a strictly smaller number of homotopy classes of properly embedded arcs,
than $\HH$ does.
$\HH_2$ has the same number of, or one more, homotopy classes of simple closed curves
than  $\HH$ does, according to whether there exists any $a$--curve homotopic to $\gamma$
in $\HH$ or not.
This implies, $|\lambda_2^{-1}(a)\big/\sim|\le|\lambda^{-1}(a)\big/\sim|$.
Moreover,   $| \HH \cap   \partial S| = | \HH_2  \cap   \partial S|$. Hence, 
$c(\HH_2,\lambda_2)\prec c(\HH,\lambda)$. This is a contradiction to the minimality of $c(\HH,\lambda)$.

In the case when there exist $l>1$ properly embedded arcs in $\HH$ homotopic to
$\alpha$, fix a small annulus $A\subseteq \overline{S\setminus\ch(\alpha)}$, of which $\hat\alpha$ is a boundary component.
Consider a set of disjoint, transversely oriented  simple closed curves
$\beta_1,\beta_2,\ldots,\beta_l,\gamma_1,\gamma_2,\ldots,\gamma_l$ 
contained in $A$ with this order.
Here, we let
$\beta_1,\beta_2,\ldots,\beta_l$
 have the opposite transverse orientations to
that of $\hat\alpha$, and $\gamma_1,\gamma_2,\ldots,\gamma_l$ 
have the transverse orientations coinciding with that of $\hat\alpha$.
By letting $\HH_1 = \HH\cup\{\beta_1,\beta_2,\ldots,\beta_l,\gamma_1,\gamma_2,\ldots,\gamma_l\}$,
the same argument implies that $c(\HH,\lambda)$ is not minimal.
 \end{proof}

\begin{figure}[htb!] 
\centering
\subfigure[$(\HH_1,\lambda_1)$]{
\labellist 
\small\hair 2pt 
\pinlabel {${}{\downarrow}$} [b] at 57 85
\pinlabel {${}{\swarrow}$} [tr] at 32 8
\pinlabel {${}{\uparrow}$}      at 57 79
\pinlabel {${}{\downarrow}$} [b] at 57 58
\pinlabel {${}_{\alpha}$} [lb] at 60 93
\pinlabel {${}_{\alpha_2}$} [rb] at 22 84
\pinlabel {${}{\nwarrow}$} [br] at 18 71
\pinlabel {${}_{\alpha_3}$} [tr] at 2 40
\pinlabel {${}{\nearrow}$} [tr] at 12 38
\pinlabel {${}_{\alpha_r}$} [lb] at 95 65
\pinlabel {${}{\nearrow}$} [b] at 91 68
\pinlabel {${}_{\beta}$} [l] at 86 35
\pinlabel {${}_{\gamma}$} [r] at 69 35
\pinlabel {${}_{\partial S}$}      at 77 86
\pinlabel {${}_{\partial S}$}      at 35 86
\pinlabel {${}_{\partial S}$}      at 8 56
\pinlabel {${}_{\partial S}$}      at 15 16
\endlabellist
\centering
\includegraphics{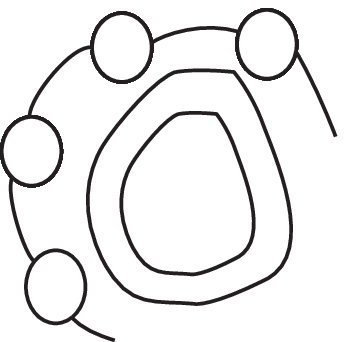}
}
\hfill
\subfigure[a reduction of $(\HH_1,\lambda_1)$]{
\labellist 
\small\hair 2pt 
\pinlabel {${}{\swarrow}$} [tr] at 32 8
\pinlabel {${}{\downarrow}$} [b] at 57 58
\pinlabel {${}_{\alpha_2}$} [rb] at 22 84
\pinlabel {${}{\nwarrow}$} [br] at 18 71
\pinlabel {${}{\nwarrow}$} [tl] at 26 70
\pinlabel {${}_{\alpha_3}$} [tr] at 2 40
\pinlabel {${}{\nearrow}$} [tr] at 12 38
\pinlabel {${}{\swarrow}$} [tr] at 32 40
\pinlabel {${}_{\alpha_r}$} [lb] at 95 65
\pinlabel {${}{\nearrow}$} [b] at 91 68
\pinlabel {${}_{\gamma}$} [r] at 69 35
\pinlabel {${}_{\partial S}$}      at 77 87
\pinlabel {${}_{\partial S}$}      at 34 86
\pinlabel {${}_{\partial S}$}      at 8 56
\pinlabel {${}_{\partial S}$}      at 15 16
\endlabellist
\centering
\includegraphics{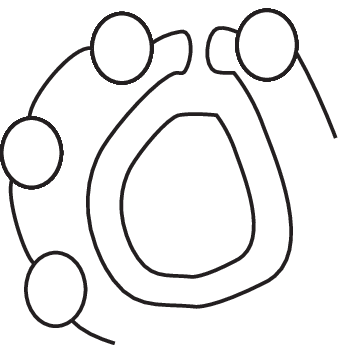}
}
\hfill
\subfigure[$(\HH_2,\lambda_2)$]{
\labellist 
\small\hair 2pt 
\pinlabel {${}{\downarrow}$} [b] at 57 59
\pinlabel {${}_{\alpha_2}$} [rb] at 22 84
\pinlabel {${}{\nwarrow}$} [br] at 18 71
\pinlabel {${}{\nwarrow}$} [br] at 29 66
\pinlabel {${}_{\alpha_r}$} [lb] at 95 65
\pinlabel {${}{\nearrow}$} [b] at 91 68
\pinlabel {${}{\nearrow}$}      at 87 61
\pinlabel {${}{\swarrow}$} [tr] at 32 8
\pinlabel {${}{\swarrow}$} [bl] at 33 1
\pinlabel {${}_{\gamma}$} [r] at 69 35
\pinlabel {${}_{\partial S}$}      at 77 87
\pinlabel {${}_{\partial S}$}      at 34 86
\pinlabel {${}_{\partial S}$}      at 8 56
\pinlabel {${}_{\partial S}$}      at 15 16
\endlabellist
\centering
\includegraphics{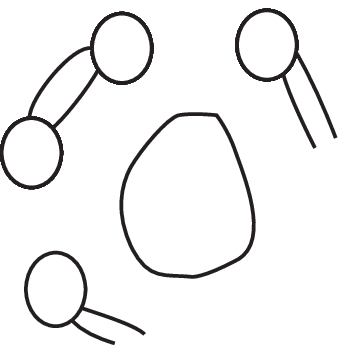}
}
\caption{Proof of Lemma~\ref{lem:norm2}. All the curves and arcs drawn here are labeled by $a$.  \label{fig:norm2}}
\label{}
\end{figure}

Now we state a lemma regarding a set of disjoint properly embedded arcs on a compact surface, 
such that each arc is one-sided. In view of Lemma~\ref{lem:norm2}, this result will be 
applied to the study of normalized label-reading pairs in the next section. 
Note, the conclusion of Lemma~\ref{lem:1sided} is not true without the hypothesis that
each arc is one-sided.

\begin{lem} \label{lem:1sided}
Let $\AAA$ be a set of disjoint properly embedded arcs on a surface $S$,
such that each arc in $\AAA$ is one-sided. 
Denote the union of the boundary components of $S$ 
intersecting with arcs in $\AAA$ by $\partial^* S$.
Fix  $\alpha\in\AAA$, and let $\hat\alpha$ be the unique induced simple closed curve of $\alpha$.
For a sufficiently small closed regular neighborhood $N$ of $\ch(\alpha)$, the following hold.
\be
\item
$N$ has a unique boundary component, say $\hat\alpha'$, that is not a boundary component of $S$. 
$\hat\alpha'$ separates $S$, and
$\hat\alpha'\sim\hat\alpha$.
\item
A properly embedded arc or a closed curve, not intersecting with 
$(\cup\AAA)\cup\partial^* S$, can be homotoped into $S\setminus N$.
\item
If we further assume that $\hat\alpha$ is null-homotopic, then $\partial^* S = \partial S$ and
any essential closed curve on $S$ intersects with $\cup\AAA$.
\ee
\end{lem}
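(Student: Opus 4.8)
Here is how I would approach Lemma~\ref{lem:1sided}. The plan is to work directly with the compact subsurface $N$, exploiting the description in Remark~\ref{rem:induced}~(2). Only the arcs of $\AAA$ lying in $\ch(\alpha)$ enter the construction of $N$, so I may as well assume every arc of $\AAA$ is in $\ch(\alpha)$. By Remark~\ref{rem:induced}~(2) we have $N=\ch(\alpha)\cup A_1\cup\cdots\cup A_r$, where the $A_j$ are disjoint collar annuli in $\overline{S\setminus\ch(\alpha)}$, each $A_j\cap\ch(\alpha)$ is an induced simple closed curve of some arc of $\ch(\alpha)$, every induced simple closed curve meeting $\ch(\alpha)$ arises this way, and the boundary components of $N$ not contained in $\partial S$ are exactly the outer circles of the $A_j$. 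Since each arc is one-sided, each arc of $\ch(\alpha)$ has a \emph{unique} induced simple closed curve, so the heart of part~(1) is to show that all of these coincide, i.e.\ that $r=1$. (Every other boundary component of $N$ is then a full boundary circle of $S$ met by $\ch(\alpha)$, since $N$ is a regular neighborhood of $\ch(\alpha)$ and an induced simple closed curve is never contained in $\partial S$.)

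To prove $r=1$ I would induct on $|\AAA|$, constructing $\ch(\alpha)$ one strip at a time and tracking the number of boundary circles of the partial channel not lying on $\partial S$. Equivalently: capping off with disks those boundary circles of $N$ that lie on $\partial S$ produces an oriented surface-with-boundary which deformation retracts onto the graph $D$ having one vertex for each boundary component of $S$ met by $\ch(\alpha)$ and one edge for each arc of $\AAA$, and whose boundary circles are precisely the $r$ interior circles of $N$. The one-sided hypothesis says that for each strip both long sides lie on a single interior boundary circle; the induction shows that appending a one-sided strip never increases the count of interior circles (a \emph{two}-sided strip, by contrast, splits one into two), while the base configuration — a single collar annulus, or a single strip joining two collars — already has exactly one such circle. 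This gives $r=1$; call the resulting circle $\hat\alpha'$. Because $A_1$ is a collar annulus with $\hat\alpha$ and $\hat\alpha'$ as its two boundary components, $\hat\alpha'\sim\hat\alpha$. Finally $\hat\alpha'$ is the entire frontier of $N$ inside $\mathrm{int}(S)$ and is connected, so $S\setminus\hat\alpha'=\mathrm{int}(N)\sqcup(S\setminus N)$ is disconnected (note $N\neq S$, since $\ch(\alpha)$ contains a boundary circle of $S$ and therefore does not embed in a disk), and hence $\hat\alpha'$ separates $S$.

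Part~(2) is elementary given this picture. Let $\gamma$ be a properly embedded arc or a closed curve disjoint from $(\cup\AAA)\cup\partial^* S$. Since $N\cap\partial S$ consists of boundary circles of $S$ met by arcs of $\AAA$, it lies in $\partial^* S$, so the endpoints of $\gamma$ (if any) already miss $N$. Now $\gamma$ is disjoint from each $\beta\in\AAA$ but may meet its strip $\eta_\beta$; each component of $\gamma\cap\eta_\beta$ enters and leaves $\eta_\beta$ through the same long side (it cannot cross the core $\beta$), hence cuts off a disk in the strip and can be homotoped off. After finitely many such moves $\gamma$ is disjoint from $\ch(\alpha)$, and then each component of $\gamma\cap A_j$ enters and leaves $A_j$ only through its outer boundary circle, so pushing $\gamma$ across the annuli $A_1,\dots,A_r$ places it in $S\setminus N$; these homotopies can be taken rel the endpoints since $\ch(\alpha)\cap\partial S$ and $N\cap\partial S$ lie in $\partial^* S$. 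For part~(3), if $\hat\alpha$ is null-homotopic then so is $\hat\alpha'\sim\hat\alpha$, so $\hat\alpha'$ bounds a disk $D\subseteq S$; a null-homotopic curve in $\mathrm{int}(S)$ bounds a disk containing no boundary circle of $S$, so $\ch(\alpha)\not\subseteq D$, and since $\hat\alpha'$ is the unique frontier circle of $N$ we must have $D=\overline{S\setminus N}$. Thus $S=N\cup_{\hat\alpha'}D$, so $\partial S=\partial N\setminus\{\hat\alpha'\}$ consists exactly of the boundary circles of $S$ met by $\ch(\alpha)$, all of which meet arcs of $\AAA$; hence $\partial^* S=\partial S$. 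If $c$ is an essential closed curve we may take it in $\mathrm{int}(S)$; were it disjoint from $\cup\AAA$ it would also miss $\partial^* S$, so by part~(2) it could be homotoped into $S\setminus N\subseteq D$ and would be null-homotopic — a contradiction. Hence $c$ meets $\cup\AAA$.

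The one genuinely delicate point is the reduction in the second paragraph: that one-sidedness of \emph{every} strip forces the channel to have a single interior boundary circle. Care is needed because deleting an arbitrary strip can destroy the one-sidedness of the remaining ones, so the induction must be organized so as to \emph{add} strips in a controlled order (or to argue via the ribbon-graph structure of $D$ that a single boundary cycle is exactly the configuration in which each edge automatically has both sides on one cycle). Everything else is a routine manipulation of regular neighborhoods and innermost-disk homotopies.
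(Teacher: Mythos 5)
Your overall strategy is sound, and parts (2) and (3) are essentially correct (your innermost-disk argument for (2) differs from the paper's chain of homotopy equivalences $S\setminus(\cup\AAA_0)\sim S\setminus\bigcup_\beta\eta_\beta\subseteq\overline{S\setminus\ch(\alpha)}\sim\overline{S\setminus N}$, but both work). The issue is that the crux of part (1) --- the claim that the regular neighborhood $N$ has exactly one boundary circle off $\partial S$ --- is precisely the step you flag as ``genuinely delicate'' and then leave unresolved. As you yourself point out, the naive induction on $|\AAA|$ is not obviously well-founded: the complexity count depends on one-sidedness \emph{with respect to the full collection}, and adding strips one at a time does not preserve that property at the intermediate stages, nor is it clear in what order to add them. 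So as written, the proposal does not contain a complete proof of (1).

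The ribbon-graph remark you append parenthetically is the right idea, and it is worth saying that spelling it out gives exactly the paper's argument in different language. Concretely: at any fixed boundary component $\partial_i S$ with edge-ends $e_1,\dots,e_k$ in cyclic order, the boundary arc of $N$ joining one side of $e_j$ to the adjacent side of $e_{j+1}$ shows those two sides lie on the same boundary cycle, while one-sidedness of $e_j$ glues its two sides to the same cycle; chaining these around the vertex shows all strip-sides at $\partial_i S$ lie on a single cycle. Propagating along edges (since an edge's two vertices share that cycle) and using connectivity of $\ch(\alpha)$ then gives a unique cycle. The paper's Claims 1 and 2 (``good'' strips and ``good'' boundary components) are exactly this vertex-local propagation --- Claim 1 handles propagation along an edge via one-sidedness, Claim 2 handles propagation around a vertex via the ``nearest pair'' on $\partial_i S$. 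If you replace your induction with this propagation and write it out, the proof of (1) is complete; as it stands, the key step is only gestured at.

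One small slip worth noting in (1): the parenthetical ``note $N\neq S$, since $\ch(\alpha)$ contains a boundary circle of $S$ and therefore does not embed in a disk'' doesn't actually argue $N\neq S$. The cleaner observation is simply that $N$ can be chosen to be a regular neighborhood with nonempty frontier in the interior of $S$ (take the strips and the collars small), so $\hat\alpha'$ is an embedded circle in $\mathrm{int}(S)$ and is the connected frontier of a proper subsurface, hence separates.
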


\begin{proof} (1) 
We use the notations in Definition~\ref{defn:stripchan}.
We will say that a boundary component of $S$ or a strip of $\AAA$
is {\em good} if it intersects with $\hat\alpha$ (Remark~\ref{rem:induced} (1)).

\begin{claim2}\label{ccl1}
If a strip is good, then so is any boundary component of $S$ intersecting with that strip.
\end{claim2}

If a strip $\eta_\beta:I\times [-1,1]\rightarrow S$ is good for some $\beta\in\AAA$,
then $\eta_\beta(I\times-1)$ or 
$\eta_\beta(I\times1)$ is contained in $\hat\alpha$.
Since $\beta$ is one-sided,
$\eta_\beta(I\times\{-1,1\})\subseteq\hat\alpha$.
In particular,
$\eta_\beta(\{0,1\}\times\{-1,1\})\subseteq\hat\alpha $.
Hence the boundary components of $S$ that intersect with the good strip
$\eta_\beta$
intersects with $\hat\alpha $.

Now we denote the boundary components of $S$ by 
$\partial_1 S,\partial_2 S,\ldots,\partial_m S$.

\begin{claim2}\label{ccl2}
If $\partial_i S$ is good, then so is 
any strip intersecting with $\partial_i S$.
\end{claim2}

Suppose $\partial_i S$ is good.
$\hat\alpha\cap\partial_i S$ is a union of intervals on $\partial_i S$, 
and the endpoints of any of those intervals are contained in good strips.
Assume $\partial_i S$ also intersects with a strip that is not good. 
On $\partial_i S$,
one can choose a {\em nearest} pair of a good strip $\eta_{\beta_1}$ and a strip $\eta_{\beta_2}$ that is not good, for some $\beta_1,\beta_2\in\AAA$. 
This implies, there exists a closed interval $u$ on $\partial_i S$ 
such that $u$ intersects with $\eta_{\beta_1}$ and $\eta_{\beta_2}$, 
but not with any other strips. 
Since $\eta_{\beta_1}$ is good, the unique induced simple closed curve of $\beta_1$
is $\hat\alpha$, and so, $u\subseteq\hat\alpha$.
Since $u$ intersects 
with the induced simple closed curve of $\beta_2$,
we have a contradiction to the assumption that $\eta_{\beta_2}$ is not good.

\begin{figure}[htb!] 
\labellist 
\small\hair 2pt 
\pinlabel {$\scriptstyle\partial_i S$}  at 48 32
\pinlabel {$\scriptstyle u$} [b] at 46 50
\pinlabel {$\scriptstyle \eta_{\beta_1}$} at 10 52
\pinlabel {$\scriptstyle \eta_{\beta_2}$} [rb] at 91 64
\pinlabel {$\scriptstyle {\beta_1}$} [r] at 1 38
\pinlabel {$\scriptstyle {\beta_2}$} [l] at 106 61
\endlabellist 
\centering
\includegraphics{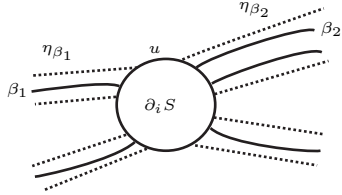}
\caption[Proof of Claim~\ref{ccl2} in Lemma~\ref{lem:1sided}]{Proof of Claim\ref{ccl2} in Lemma~\ref{lem:1sided}. \label{fig:good}}
\end{figure}

By Claim~\ref{ccl1} and~\ref{ccl2},
if $\partial_i S$ and $\partial_j S$ 
are connected by an arc in $\AAA$,
and $\partial_i S$ is good, then so is $\partial_j S$.
Since  $\ch(\alpha)$ is connected,
it follows that a boundary component of $S$ or a strip is good if and only if
it is contained in $\ch(\alpha)$.

Now choose any component $\kappa$ of $\partial  N \setminus\partial S$.
There exists $\beta\in\AAA$, such that $\kappa$ and the induced simple closed curve
$\hat\beta$ of $\beta$   bound an annulus contained in the closure of ${  N \setminus\ch(\alpha)}$,
and $\hat\beta\subseteq\ch(\alpha)$
 (Remark~\ref{rem:induced} (2)).
The strip $\eta_\beta$ of $\beta$ intersects with $\hat\beta$.
Since $\hat\beta\subseteq\ch(\alpha)$, 
$\eta_\beta\cap\ch(\alpha)\ne\varnothing$.
This implies that $\eta_\beta\subseteq \ch(\alpha)$, and so,  $\eta_\beta$ is good; that is,
 $\eta_\beta$ intersects also with $\hat\alpha$.
Since $\beta$ is assumed to be one-sided, 
$\hat\beta  = \hat\alpha$. 
Hence $\kappa$ is the unique boundary component of $N$ 
that bounds an annulus with $\hat\alpha$. 
This proves that $\partial  N$ contains only one component that is not
in $\partial S$.

Note that $N$ is a proper subsurface of $S$ such that the frontier, namely $\kappa$,
is connected.
Hence $\kappa$ separates $S$.

(2)
Suppose $\gamma$ is a curve on $S$, not intersecting with 
$(\cup\AAA)\cup\partial^* S$. 
Let $\AAA_0$ be a minimal set of arcs in $\AAA$ satisfying the following:
\begin{quote}
for each $\beta\in\AAA$, there uniquely exists $\beta_0\in\AAA_0$ such that
$\beta\sim\beta_0$.
\end{quote}

{\em Case 1. $\gamma$ is a closed curve.}

We have
\[
\gamma\subseteq S\setminus(\cup\AAA)
\subseteq S\setminus(\cup\AAA_0)
\sim
S\setminus(\cup_{\beta\in\AAA}\; \eta_\beta)
\subseteq
\overline{S\setminus\ch(\alpha)}
\sim
\overline{S\setminus  N }.
\]
The first homotopy is obtained by enlarging each arc in $\AAA_0$ to a strip,
and the second one is a deformation retract of the annuli
 discussed in Remark~\ref{rem:induced} (2) onto circles. 
Clearly, $\overline{S\setminus N}$
can be homotoped into $S\setminus N$.

{\em Case 2. $\gamma$ is a properly embedded arc.}

The argument for this case is almost the same as Case 1. 
One has only to show that
 that there exists a
homotopy
that sends
$\gamma$ into $S\setminus  N $,
leaving the endpoints on  $\partial S$.
For this, 
we choose $ N $ as a sufficiently small regular neighborhood of $\ch(\alpha)$
such that $\partial \gamma\cap  N =\varnothing$. This is possible
since $\gamma$ does not intersect $\partial^* S$.
Then we have only to note that
the homotopies in the proof of Case 1 do not move $\partial \gamma$.

(3)
Supose $\hat\alpha \sim 0$.
From (1),
there exists $\kappa\sim\hat\alpha $
 such that
$\partial  N \subseteq \{\kappa\}\cup\partial^* S$.
Since $\kappa$ separates, one can write $S =  N \cup S'$
 such that
 $ N \cap S' = \kappa$.
 $N$ contains at least one boundary component of $S$, namely 
any of the boundary components that $\alpha$ intersects. Hence, $N$ is not a disk.
Now for $\kappa$ to be null-homotopic, $S'$ must be a disk
and $\partial S\subseteq \partial  N $.
So $\partial  N  =\{\kappa\}\cup \partial S$,
and $\partial^* S = \partial S$.

Let $\gamma$ be any closed curve, not intersecting
with any arc in $\AAA$. By (2),
$ \gamma$ is homotopic into $S\setminus  N$, which is the interior of $S'$.
 This implies that $\gamma$ is null-homotopic.  \end{proof}

\section{Adding Bisimplicial Edges}\label{sec:bisimp}

An edge $\{a,b\}$ is {\em bisimplicial} if any vertex in $\link(a)$ is either equal or adjacent to 
 any vertex in $\link(b)$. For an edge $e$ of a graph, $\mathring{e}$ 
 denotes the interior of $e$. 
 In this section, we prove the following theorem.

\begin{thm}\label{thm:bisimplicial}
Let $e$ be a bisimplicial edge of a graph $\Gamma$.
If  $\Gamma\setminus\mathring{e}\in\NNN'$, then $\Gamma\in\NNN'$.
\end{thm}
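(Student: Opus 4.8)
The plan is to argue by contradiction, exploiting normalized label-reading pairs (Definitions~\ref{defn:regular} and~\ref{defn:norm}) together with the structural results Lemma~\ref{lem:norm1}, Lemma~\ref{lem:norm2}, and Lemma~\ref{lem:1sided}. Suppose $\Gamma\notin\NNN'$ and $e=\{a,b\}$ is bisimplicial. Fix a compact hyperbolic surface $S$ and a relative embedding $\phi\co\pi_1(S)\to A(\Gamma)$, induced by a normalized label-reading pair $(\HH,\lambda)$. Let $\AAA_a$ and $\AAA_b$ denote the sets of $a$-arcs and $b$-arcs in $\HH$. The goal is to modify $(\HH,\lambda)$ into a label-reading pair with underlying graph $\Gamma\setminus\mathring e$ still inducing an injective, relatively-embedded map, which would contradict $\Gamma\setminus\mathring e\in\NNN'$. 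The only obstruction to $(\HH,\lambda)$ being a pair over $\Gamma\setminus\mathring e$ is the presence of intersection points between an $a$-curve/arc and a $b$-curve/arc (these are permitted only because $\{a,b\}\in E(\Gamma)$). So the real content is: either one can remove all such $a$--$b$ crossings by a homotopy preserving the equivalence class, or one can delete one of the two labels entirely.

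First I would handle the curves. By Lemma~\ref{lem:norm2} every arc in $\HH$ is one-sided, so Lemma~\ref{lem:1sided}(1) applies to each $\alpha\in\AAA_a$: the channel $\ch(\alpha)$ has a single induced simple closed curve $\hat\alpha$ up to homotopy, it separates $S$, and everything disjoint from $(\cup\AAA_a)\cup\partial^{*}S$ can be pushed off a neighborhood $N$ of $\ch(\alpha)$. The key point is that bisimpliciality of $\{a,b\}$ forces $\link(a)\setminus\{b\}$ and $\link(b)\setminus\{a\}$ to be ``compatible'': any vertex meeting an $a$-curve is equal or adjacent to any vertex meeting a $b$-curve. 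I would use this to show that an $a$-curve and a $b$-curve that cross can be made disjoint. Concretely: if $\beta\in\AAA_b$ crosses $\alpha\in\AAA_a$, then $\beta$ meets $\hat\alpha$, and since by Lemma~\ref{lem:1sided}(1) $\hat\alpha$ is homotopic to the frontier $\hat\alpha'$ of $N$, one can homotope $\beta$ across $N$ to remove the crossings with $\alpha$; the crossings $\beta$ acquires in the process are with curves/arcs labeled by $\link(a)$, and these labels are all adjacent-or-equal to $b=\lambda(\beta)$ by bisimpliciality, hence legal. This is a normalization-style move (Lemma~\ref{lem:equiv}), so it does not change the equivalence class, and it strictly reduces the number of $a$--$b$ intersection points; iterating, one removes all of them.

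The remaining, and I expect hardest, case is when one of $a$, $b$ labels a properly embedded arc whose induced simple closed curve is \emph{essential} — then the pushing-across argument still works at the level of the channel, but one must check that relative-embeddedness over $\Gamma\setminus\mathring e$ is preserved, i.e. that arcs hitting $\partial_i S$ still all carry labels in a common complete subgraph of $\Gamma\setminus\mathring e$. Here I would distinguish whether, for a given boundary component, the local complete subgraph $K$ guaranteed by regularity contains both $a$ and $b$, or only one of them; in the first subcase $K\setminus\{a\}$ or $K\setminus\{b\}$ is still complete in $\Gamma\setminus\mathring e$ precisely when $a$ or $b$ is ``redundant'' there, and when it is not redundant one must instead argue that $\phi$ restricted appropriately remains injective after deleting, say, all $a$-curves and $a$-arcs. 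For the latter I would use the separation statement in Lemma~\ref{lem:1sided}(1) together with Lemma~\ref{lem:primitive} (commutativity is scarce in surface groups, as in the proofs of Lemma~\ref{lem:simplicialv} and Proposition~\ref{prop:join}): an $a$-arc and the boundary curves it connects generate a subgroup whose image commutes with $A(\link(a))$, and bisimpliciality makes $\langle\link(a)\cup\link(b)\cup\{a,b\}\rangle$ controlled enough to force the same annulus-type contradiction. Once every $a$--$b$ crossing is eliminated and no essential data is lost, $(\HH,\lambda)$ (or the pair obtained by deleting one label) is a regular, relatively-embedded label-reading pair over $\Gamma\setminus\mathring e$ inducing an injection $\pi_1(S')\hookrightarrow A(\Gamma\setminus\mathring e)$ for some hyperbolic $S'\subseteq S$, contradicting $\Gamma\setminus\mathring e\in\NNN'$. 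Hence $\Gamma\in\NNN'$.
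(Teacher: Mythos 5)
Your high-level strategy matches the paper's: pass to a normalized label-reading pair, show that under bisimpliciality the $a$-labeled and $b$-labeled curves and arcs can be made disjoint, and then reinterpret $(\HH,\lambda)$ as a label-reading pair over $\Gamma'=\Gamma\setminus\mathring e$. However, two of your steps have genuine gaps where the real work of the proof lies.

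First, the ``homotope $\beta$ across $N=N(\ch(\alpha))$'' move does not work in the case that matters most. The frontier of $N$ is a single separating curve $\hat\alpha'$ (Lemma~\ref{lem:1sided}(1)), so you can only push $\beta$ across $N$ if $\beta$ actually exits $N$. But a $b$-arc $\beta$ crossing an $a$-arc $\alpha$ may have both endpoints on boundary components that lie \emph{inside} $N(\ch(\alpha))$; then $\beta$ cannot be pushed off $N$ at all. This is precisely the delicate case, and the paper does not handle it by a homotopy move. Instead it proves, by a chain of non-constructive impossibility claims (Claims~\ref{cl4}, \ref{cl5}, \ref{cl8}, \ref{cl9}, \ref{cl10}), that in a \emph{normalized} pair such intersections simply cannot exist: Claim~\ref{cl4} shows a $b$-arc joining two components of $\partial^a S$ must meet some curve $\gamma$ with $\lambda(\gamma)\notin\link(a)\cup\{a\}$; Lemma~\ref{lem:1sided}(2) then forces $\gamma$ to be homotopic off $N(\ch(\alpha))$, hence (via Claim~\ref{cl8}) off $N(\ch(\beta))$, giving $i(\beta,\gamma)=0$, a contradiction. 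Your sketch does not supply an argument of comparable force for this configuration, and the constructive move you propose fails exactly there.

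Second, even after all $a$--$b$ crossings are gone, the map $\phi'\co\pi_1(S)\to A(\Gamma')$ is injective essentially for free (it factors $\phi$ through the quotient $A(\Gamma')\twoheadrightarrow A(\Gamma)$), but you must still verify it is a \emph{relative} embedding over $\Gamma'$. This requires showing no boundary component meets both an $a$-arc and a $b$-arc, i.e.\ $\partial^a S\cap\partial^b S=\varnothing$ (the paper's Claim~\ref{cl11}). Your proposal only gestures at this (``one must check that relative-embeddedness is preserved'') and the fallback of deleting one label entirely is only justified in the degenerate case $\lambda(\HH)\subseteq\link(a)\cup\{a\}$; it does not resolve the general case. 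Without the analog of Claim~\ref{cl11}, the contradiction with $\Gamma'\in\NNN'$ is not obtained, since $\NNN'$ is defined via relative embeddings, not mere embeddings.
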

 
\begin{proof} 
Write $e=\{a,b\}$,
and let $\Gamma' =  \Gamma\setminus\mathring{e}$. 
Assume that $\Gamma\not\in\NNN'$. One can find a compact hyperbolic surface 
$S$, and a relative embedding $\phi:\pi_1(S)\rightarrow A(\Gamma)$ with respect to a normalized label-reading pair
 $(\HH,\lambda)$.

First, consider the case when  $\lambda(\HH)\subseteq \link(a)\cup\{a\}$.
 $\Gamma_{\link(a)\cup \{a\}}\not\in\NNN'$, since
 the image of $\phi$ is in $A(\Gamma_{\link(a)\cup\{a\}})$.
$\Gamma_{\link(a)\cup \{a\}}$ is the join of the single vertex $a$ and $\Gamma_{\link(a)}$.
Since a single vertex is in $\NNN'$, 
Proposition~\ref{prop:join} implies that $\Gamma_{\link(a)}\not\in\NNN'$. 
Note that
$\Gamma_{\link(a)}\le\Gamma_{V(\Gamma)\setminus\{a\}}\le\Gamma'$.
Hence,
$\Gamma' $ is not in $\NNN'$, which contradicts to the assumption.
The case when
$\lambda(\HH)\subseteq \link(b)\cup\{b\}$ is similar.

Now assume 
  $\lambda(\HH)\not\subseteq \link(a)\cup\{a\}$ 
and
  $\lambda(\HH)\not\subseteq \link(b)\cup\{b\}$. We denote the boundary components of $S$ by
  $\partial_1 S,\partial_2 S,\ldots,\partial_m S$. For a based curve or an arc $\gamma$ on $S$,
we let  $w_\gamma$ denote the label-reading of $\gamma$ with respect to the label-reading pair
  $(\HH,\lambda)$ as in Section 2. 
  
\begin{claim}\label{cl1}
Suppose $\alpha$ and $\beta$ are essential simple closed curves on $S$ such that
$\alpha\cap\beta\ne\varnothing$, 
$w_\alpha\in\langle \link(a)\rangle$ and $w_\beta\in\langle \link(b)\rangle$.
Then
$\alpha \sim \beta^{\pm1}$.
\end{claim}
    
We may choose the base point of $\pi_1(S)$ in $\alpha\cap\beta$.
$\phi[[\alpha],[\beta]] = [w_\alpha, w_\beta] = 1$, 
since any vertex in $\link(a)$ is equal or adjacent to any vertex in $\link(b)$.
By Lemma~\ref{lem:primitive}, $\alpha\sim\beta^{\pm1}$. Here, we have assumed that $\alpha$
and $\beta$ are transverse to $\HH$. If not, one may consider $\alpha'\sim\alpha$ and 
$\beta'\sim\beta$ such that $\alpha'$ and $\beta'$ are sufficiently close to $\alpha$ and $\beta$
respectively, and transversely intersecting $\HH$. The claim is proved.
  
For $v\in V(\Gamma)$, recall that simple closed curves and properly embedded arcs
in $\HH$ labeled by $v$, are called $v$--curves and $v$--arcs, respectively.
$\CCC_v$ and $\AAA_v$ will denote the set of $v$--curves and the set of $v$--arcs,
respectively.
Let $\partial^v S$ denote the union of the boundary components of $S$ that intersect with 
 $v$--arcs.

\begin{claim}\label{cl2}
$(\cup\CCC_a)\cap(\cup \CCC_b)=\varnothing$.
\end{claim}
 
Suppose $\alpha$ and $\beta$ intersect at a point $p$, 
for some $\alpha\in\CCC_a$ and $\beta\in\CCC_b$.
One can find simple closed curves
$\alpha_1\sim\alpha$ and $\beta_1\sim\beta$, intersecting at a point $p'$ near $p$,
such that $\alpha_1$ and $\beta_1$ are transverse to $\HH$.
By requiring that $\alpha_1$ is sufficiently close to $\alpha$,
we may assume that the label-reading of $\alpha_1$ with the base point $p'$,
is same as the label-reading of $\alpha$  with a suitable choice of the base point.
If $\gamma\in\HH$ intersects with $\alpha$, then
$\lambda(\gamma)\in\link(a)$, 
by the definition of a label-reading pair.
Hence, $w_{\alpha_1}\in\langle \link(a)\rangle$.
Similarly, $w_{\beta_1}\in\langle \link(b)\rangle$. 
By Claim~\ref{cl1}, $\alpha\sim\alpha_1\sim\beta_1^{\pm1}\sim\beta^{\pm1}$,
which contradicts to the assumption that curves and arcs in $\HH$ are minimally intersecting
(Remark~\ref{rem:reduction} (1)).

\begin{claim}\label{cl3}
If $\alpha\in\AAA_a$ and $\beta\in\AAA_b$, then
$\alpha\not\sim\beta$.
\end{claim}
 Suppose an $a$--arc $\alpha$ and  a $b$--arc $\beta$ are homotopic.
 They join the same pair of boundary components, say $\partial_1 S$ and $\partial_2 S$.
$\alpha\sim\beta$ implies that if $\gamma\in\HH$ intersects with $\alpha$,
then $\gamma$ also intersects with $\beta$, and so, $\lambda(\gamma)\in\link(a)\cap\link(b)$.
It follows that $w_{\alpha}\in\langle  \link(a)\cap\link(b)\rangle$.
Note that $w_{\partial_1 S}$ and $w_{\partial_2 S}$ are in $\langle  a,b,\link(a)\cap\link(b)\rangle$.
As in the proof of Lemma~\ref{lem:simplicialv}, consider 
$\delta_1\sim\partial_1 S$ and $\delta_2\sim\partial_2 S$ with the same base point 
such that $\delta_1$ and $\delta_2$ transversely intersect $\HH$
(Figure~\ref{fig:lem:simplicialv}).
We may assume
$\delta_1$ and $\delta_2$ are sufficiently close to $\partial_1 S$ and $\alpha\cdot\partial_2 S\cdot\alpha^{-1}$
respectively,  so that $w_{\delta_1}$ and $w_{\delta_2}$ are in $\langle  a,b,\link(a)\cap\link(b)\rangle$.
Hence, $\phi([\delta_1,\delta_2]) = [w_{\delta_1}, w_{\delta_2}]=1$,
since $\{a,b\}\cup(\link(a)\cap\link(b))$ induces a complete subgraph in $\Gamma$.
This leads to a  contradiction again, 
since $\partial_1 S\ne\partial_2 S$ (Lemma~\ref{lem:norm1}) implies that $[\delta_1,\delta_2]\ne1$ (Lemma~\ref{lem:primitive}).
\begin{claim}\label{cl4}
If a $b$--arc $\beta$ joins two components in $\partial^a S$, 
then $\beta$ intersects some $\gamma\in\HH$ that is not
labeled by a vertex in $\link(a)\cup\{a\}$.
\end{claim}
As in the proof of Claim~\ref{cl3}, 
choose $\beta_1$ sufficiently close to $\beta$ such that $\beta_1\sim\beta$, 
$w_{\beta_1} = w_\beta\in\langle \link(b)\rangle$,
and $\beta_1$ transversely intersects $\HH$.
Assume that whenever $\gamma\in\HH$ and $\beta_1\cap\gamma\ne\varnothing$,
$\lambda(\gamma)\in\link(a)\cup\{a\}$. 
This implies $w_{\beta_1}\in\langle (\link(a)\cup\{a\})\cap\link(b)\rangle
= \langle  a,\link(a)\cap\link(b)\rangle$.
Let $\partial_1 S$ and $\partial_2 S$ be
the boundary components joined by $\beta$.
$\partial_i S\subseteq\partial^a S\cap\partial^b S$ for $i=1,2$, by the assumption of the claim.
This means $w_{\partial_1 S},w_{\partial_2 S} \in\langle  a,b,\link(a)\cap\link(b)
\rangle$.
As in the proof of Claim~\ref{cl3}, $[[\partial_1 S],[\beta_1\cdot\partial_2 S\cdot\beta_1^{-1}]]=1$,
which is a contradiction. So, there exists $\gamma\in\HH$  such that  
$\beta_1\cap\gamma\ne\varnothing$ (hence, $\beta\cap\gamma\ne\varnothing$)
and
$\lambda(\gamma)\not\in\link(a)\cup\{a\}$.

Now, we recall the notations and the terms from Definition~\ref{defn:stripchan} and Remark~\ref{rem:induced}. 
For each $\alpha\in\AAA_a$,
$\ch(\alpha)$ denotes the channel of $\alpha$ with respect to the set $\AAA_a$ and
 $N(\ch(\alpha))$ denotes a sufficiently small closed regular neighborhood of $\ch(\alpha)$ 
 satisfying the conclusion of Lemma~\ref{lem:1sided}.
This implies that $\hat\alpha$ is homotopic to the unique component of 
 $\partial N(\ch(\alpha))\setminus\partial S$ (Lemma~\ref{lem:1sided} (1)).

\begin{claim}\label{cl5}
Let $\alpha$ be an $a$--arc, and $\hat\alpha$ be the unique induced simple closed curve of $\alpha$
 with respect to $\AAA_a$. Then $w_{\hat\alpha}\in\langle \link(a)\rangle$.
\end{claim}

From Remark~\ref{rem:induced} that $\hat\alpha$ can be written as
\[ \hat\alpha = \alpha_1' \cdot \delta_1 \cdot\alpha_2' \cdot \delta_2 \cdots 
\alpha_r'\]
where for each $i$,
 $\alpha_i'$ is homotopic to an $a$--arc $\alpha_i$, 
 and $\delta_i$ is an interval on a boundary component of $S$ 
 which is intersecting with the $a$--arcs $\alpha_i$ and $\alpha_{i+1}$. 
 Moreover, $\hat\alpha$ does not intersect with
any $a$--curves or $a$--arcs. It follows that $w_{\hat\alpha}\in\langle \link(a)\rangle$.

\begin{claim}\label{cl6}
The induced simple closed curve of an $a$-- or $b$--arc is essential.
\end{claim}

Suppose the induced simple closed curve $\hat\alpha$
of an $a$--arc $\alpha$ is null-homotopic. By Lemma~\ref{lem:1sided} (3), 
$\partial^a S = \partial S$, and any simple closed curve in $\HH$  is labeled by a vertex in $\link(a)$. 
This implies that
the label of any curve or arc in $\HH$ is either $a$ or  adjacent to $a$.
Hence, $\lambda(\HH)\in\link(a)\cup\{a\}$, which was excluded. 
The case for the induced simple closed curve of a $b$--arc
is similar, by symmetry.

\begin{claim}\label{cl7}
$(\cup\AAA_a)\cap(\cup\CCC_b)=\varnothing$, and 
$(\cup\AAA_b)\cap(\cup\CCC_a)=\varnothing$.
\end{claim}

Supose $\alpha\in\AAA_a$ and $\beta\in\CCC_b$ intersect at $p$.
Let $\hat\alpha$ be the induced simple closed curve of $\alpha$.
 $\hat\alpha'$ denotes the unique
boundary component of $ N(\ch(\alpha)) $ that is not in $\partial S$
(Lemma~\ref{lem:1sided} (1)). $\hat\alpha'\sim\hat\alpha$, and
 $\hat\alpha'\not\sim 0$ by Claim~\ref{cl6}.
By Claim~\ref{cl5},
$w_{\hat\alpha'}\in\langle \link(a)\rangle$.
Since $\alpha\cap\beta\ne\varnothing$,
$\hat\alpha'\cap\beta\ne\varnothing$. 
Moreover, $w_\beta\in\langle\link(b)\rangle$.
By Claim~\ref{cl1}, $\hat\alpha'\sim\beta^{\pm1}$, and so, $i(\alpha,\beta) = i(\alpha,\hat\alpha') = 0$.
This contradicts to the assumption that 
$\alpha$ and $\beta$ are minimally intersecting. 
$(\cup\AAA_b)\cap(\cup\CCC_a)=\varnothing$ follows from the symmetry.

\begin{claim}\label{cl8}
Let $\alpha\in \AAA_a$ and $\beta\in\AAA_b$.
Denote the induced simple closed curves of $\alpha$ and $\beta$
by $\hat\alpha$ and $\hat\beta$, respectively. 
Suppose either
\enumir
\be
\item
$\alpha$ and $\beta$ intersect, or
\item
$\hat\alpha$ and $\hat\beta$ intersect, 
and there exists a boundary component which intersects with both $\alpha$ and $\beta$.
\end{enumerate}
\enumia
Then 
$\hat\alpha\sim\hat\beta^{\pm1}$,
and
$ N(\ch(\alpha)) \cap\partial S =  N(\ch(\beta)) \cap\partial S$.
Moreover, there exists a homotopy from 
$ N(\ch(\alpha)) $ onto $ N(\ch(\beta)) $ fixing $ N(\ch(\alpha)) \cap\partial S
=  N(\ch(\beta)) \cap\partial S$.
\end{claim}

As in Claim~\ref{cl7}, let $\hat\alpha'$ and $\hat\beta'$
be the boundary components of $ N(\ch(\alpha)) $ and $ N(\ch(\beta))$,
that are not boundary components of $S$, respectively.
Assuming (i) or (ii),
two essential curves $\hat\alpha'$ and $\hat\beta'$ intersect.
Here, we have also assumed that 
$\hat\alpha'$ and $\hat\beta'$ are sufficiently close to $\hat\alpha$ and $\hat\beta$.
By Claim~\ref{cl5},
$w_{\hat\alpha'}\in\langle \link(a)\rangle$ and  $w_{\hat\beta'}\in\langle \link(b)\rangle$.
From Claim~\ref{cl1}, it follows that
 $\hat\alpha'\sim\hat\beta'^{\pm1}$.
By Lemma~\ref{lem:1sided} (1) ,
both $\hat\alpha'$ and $\hat\beta'$ are separating simple closed curves on $S$.
So either $ N(\ch(\alpha)) \sim  N(\ch(\beta)) $
or $ N(\ch(\alpha)) \sim \overline{S\setminus  N(\ch(\beta)) }$.
Suppose $ N(\ch(\alpha)) \sim \overline{S\setminus  N(\ch(\beta)) }$.
Then any boundary component of $S$ contained in $ N(\ch(\alpha)) $ will not be contained in 
$ N(\ch(\beta)) $.
So no boundary component of $S$ can intersect both $\alpha$ and $\beta$.
Since $\alpha\subseteq N(\ch(\alpha))$, $\alpha$ is homotopic into
$S\setminus  N(\ch(\beta)) $, 
and so, $i(\alpha,\beta)=0$. So neither (i) nor (ii) of the given 
conditions holds. 
Hence $ N(\ch(\alpha)) \sim  N(\ch(\beta)) $, and the rest of the claim follows immediately.

\begin{claim}\label{cl9}
Let $\alpha$ be an $a$--arc. 
Suppose a $b$-arc $\beta$  joins two boundary components of $S$
 that are contained in $N(\ch(\alpha))$.
Let  $\hat\alpha$ and $\hat\beta$ denote the induced simple closed curves of
$\alpha$ and $\beta$, respectively. 
Then $\hat\alpha\cap\hat\beta=\varnothing$.
\end{claim}

Suppose $\beta$ joins $\partial_1 S$ and $\partial_2 S$,
and $\partial_1 S\cup\partial_2 S\subseteq N(\ch(\alpha)) $.
Assume that $\hat\alpha\cap\hat\beta\ne\varnothing$.
For $i=1$ or $2$,  $\partial_i S\subseteq N(\ch(\alpha))$ and so,
$\partial_i S$ intersects with some $a$--arc.
By Claim~\ref{cl4}, there exists  $\gamma\in\HH$ such that
$\beta\cap\gamma\ne\varnothing$ and 
  $\lambda(\gamma)\not\in\link(a)\cup\{a\}$.
This implies that 
 $\gamma$ can not intersect any $a$-curve or $a$-arc, and
 $\gamma\cap\partial^a S=\varnothing$.
 In particular, $\gamma\cap (\cup \AAA_a)=\varnothing$.
By Lemma~\ref{lem:1sided} (2),
$\gamma\leadsto S\setminus  N(\ch(\alpha))$ (Notation~\ref{notation:hom}).
By choosing a suitable arc in $\ch(\alpha)$, 
we may assume that $\alpha$ intersects with
either $\partial_1 S$ or $\partial_2 S$.
From Claim~\ref{cl8} (with condition (ii)),
$\gamma\leadsto S\setminus  N(\ch(\beta)) $.
So $i(\beta,\gamma)=0$, which is a contradiction.

\begin{claim}\label{cl10}
$(\cup\AAA_a)\cap(\cup\AAA_b)=\varnothing$.
\end{claim}

Suppose $\alpha\in\AAA_a$ and $\beta\in\AAA_b$ intersect.
By Claim~\ref{cl8} (with condition (i)),
 $\hat\alpha\sim\hat\beta^{\pm1}$ and $ N(\ch(\alpha)) \sim  N(\ch(\beta)) $.
This implies that 
$  N(\ch(\alpha)) \cap\partial S =  N(\ch(\beta)) \cap\partial S$,
and so,
$\beta\subseteq  N(\ch(\beta)) $ joins two boundary components contained 
in $ N(\ch(\alpha)) $. By Claim~\ref{cl9}, $\hat\alpha\cap\hat\beta=\varnothing$,
which contradicts to the assumption that $\alpha\cap\beta\ne\varnothing$.

\begin{claim}\label{cl11}
$\partial^a S\cap \partial^b S= \varnothing$.
\end{claim}

Suppose $\partial_i S$ intersects with an $a$--arc $\alpha$
and a $b$--arc $\beta$. By considering  a nearest pair of such arcs
 on $\partial_i S$,
we may assume that
the induced simple closed curves $\hat\alpha$
and $\hat\beta$ of $\alpha$ and $\beta$ intersect (Figure~\ref{fig:claim11}).
By Claim~\ref{cl8} again, 
$N(\ch(\alpha))=N(\ch(\beta))$, and hence as in the proof of Claim~\ref{cl10},
$\beta$ joins two boundary components of $ N(\ch(\alpha)) $.
By Claim~\ref{cl9}, $\hat\alpha$ and $\hat\beta$ are disjoint, which is a contradiction.
This proves Claim~\ref{cl11}. 

 \begin{figure}[bh!] 
 \labellist 
\small\hair 2pt 
\pinlabel {$\scriptstyle \hat\alpha\cap\hat\beta$} [b] at 26 53
\pinlabel {$\bullet$} at 28 46
\pinlabel {$\scriptstyle\partial_i S$}  at 66 28
\pinlabel {$\scriptstyle\hat\alpha$} [tr] at 27 10
\pinlabel {$\scriptstyle\hat\beta$} [l] at 98 10
\pinlabel {$\scriptstyle\alpha$} [l] at 135 47
\pinlabel {$\scriptstyle\beta$} [rb] at  -5 32
\pinlabel {$\swarrow$} [tr] at 23 40
\pinlabel {$\scriptstyle b$} [tr] at 13 27
\pinlabel {$\scriptstyle a$} [tl] at 123 35
\pinlabel {$\searrow$} [tl] at 112 48
\endlabellist 
\centering
\includegraphics{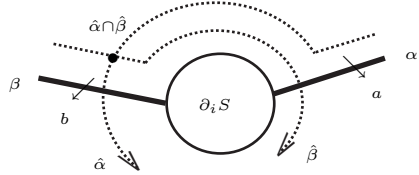}
\caption{Proof of Claim~\ref{cl11} in Theorem~\ref{thm:bisimplicial}. }
\label{fig:claim11}
\end{figure}

Recall 
 $\Gamma' =  \Gamma\setminus\mathring{e}$. 
By Claim~\ref{cl2}, \ref{cl7} and \ref{cl10}, 
$\alpha$ and $\beta$ are disjoint
for any $\alpha\in\lambda^{-1}(a)$
and
$\beta\in\lambda^{-1}(b)$.
Hence, $(\HH,\lambda)$ can be considered as a label-reading pair
with the underlying graph $\Gamma'$, inducing
$\phi':\pi_1(S)\rightarrow A(\Gamma')$. 
Injectivity of $\phi'$ can be seen from the following commutative diagram.
Here, $A(\Gamma')\rightarrow A(\Gamma)$ is the natural
quotient map, obtained by adding the relator $[a,b]=1$.
\[
\xymatrix{
 & A(\Gamma')\ar@{>>}[d]^*{[a,b]\mapsto 1}\\
 \pi_1(S)\ar@{.>}[ur]^{\phi'}\ar[r]^\phi &
 A(\Gamma) 
 }
 \]
By Claim~\ref{cl11}, no boundary components of $S$ intersect with an $a$--arc and a $b$--arc at the same time. 
So the labels of the arcs intersecting with a fixed boundary component $\partial_i S$
are pairwise adjacent not only in $\Gamma$, but also in $\Gamma'$.
Hence $\phi'$ is a relative embedding, and $\Gamma'\not\in\NNN'$. 
  \end{proof}

Recall that a graph $\Gamma$ is {\em chordal bipartite} if $\Gamma$ does not contain a triangle
or an induced cycle of length at least $5$.

\begin{cor}\label{cor:cb}
Choral bipartite graphs are in $\NNN'$.
\end{cor}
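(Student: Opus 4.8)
The plan is to combine Theorem~\ref{thm:bisimplicial} with the combinatorial characterization of chordal bipartite graphs recorded in the introduction, and run a straightforward induction. Recall from the introduction that, by~\cite{GG1978}, any chordal bipartite graph $\Gamma$ can be obtained from a discrete graph on $V(\Gamma)$ by successively attaching bisimplicial edges; that is, there is a sequence $\Gamma_0 \subseteq \Gamma_1 \subseteq \cdots \subseteq \Gamma_t = \Gamma$ of graphs, all on the vertex set $V(\Gamma)$, such that $\Gamma_0$ is discrete and, for each $i \ge 1$, the graph $\Gamma_i$ is obtained from $\Gamma_{i-1}$ by adding a single edge $e_i$ which is bisimplicial in $\Gamma_i$. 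Equivalently, $\Gamma_{i-1} = \Gamma_i \setminus \mathring{e_i}$ with $e_i$ a bisimplicial edge of $\Gamma_i$.

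First I would dispose of the base case. A discrete graph contains no cycle at all, hence is vacuously chordal, so $\Gamma_0 \in \NNN'$ by Corollary~\ref{cor:chordal}. (Alternatively, a discrete graph on $n$ vertices is an iterated complete graph amalgamation of $n$ copies of $K_1$ along the empty graph, so Theorem~\ref{thm:completeamalgam} gives the same conclusion.) Then, for the inductive step, assume $\Gamma_{i-1} \in \NNN'$. Since $e_i$ is a bisimplicial edge of $\Gamma_i$ and $\Gamma_i \setminus \mathring{e_i} = \Gamma_{i-1} \in \NNN'$, Theorem~\ref{thm:bisimplicial} yields $\Gamma_i \in \NNN'$. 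Iterating from $\Gamma_0$ up to $i = t$ gives $\Gamma = \Gamma_t \in \NNN'$, which is the assertion; in particular $A(\Gamma)$ contains no closed hyperbolic surface group.

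I do not expect any genuine obstacle in this corollary itself: the substantive work is entirely in Theorem~\ref{thm:bisimplicial} (closure of $\NNN'$ under adding a bisimplicial edge) together with the Golumbic--Goss elimination scheme, and once these are available the corollary is a one-line induction. The only point that needs a moment's care is checking that the base of the elimination, a discrete graph, lies in $\NNN'$, and this follows from Corollary~\ref{cor:chordal} (or Theorem~\ref{thm:completeamalgam}) as noted above.
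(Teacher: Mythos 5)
Your proof is correct and follows essentially the same route as the paper: base the induction on the fact that discrete graphs lie in $\NNN'$, and climb up via the Golumbic--Goss bisimplicial-edge elimination together with Theorem~\ref{thm:bisimplicial}. The paper justifies the base case exactly as in your parenthetical alternative, namely by applying Theorem~\ref{thm:completeamalgam} to $K_0$-amalgamations, so there is no substantive difference.
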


\begin{proof}
By applying Theorem~\ref{thm:completeamalgam} to  $K_0$ ($=\varnothing$) amalgamation,
one sees that $\NNN'$ is closed under disjoint union. 
In particular, discrete graphs are in $\NNN'$.
Golumbic and Goss proved that 
by removing bisimplicial edges from a chordal bipartite graph successively,
one obtains a discrete graph~\cite{GG1978}. By Theorem~\ref{thm:bisimplicial},
it follows that any chordal bipartite graph is in $\NNN'$. \end{proof}

\section{$\NNN$ and $\NNN'$}\label{sec:nni}

We have shown several properties of the graph class $\NNN'$, which is a subclass of $\NNN$.
In this section, we give a lower bound for $\NNN'$ to illustrate that $\NNN'$ already contains a large number of
graphs. Also, we prove that two specific graphs are not in $\NNN$ (hence not in $\NNN'$), 
providing new examples not covered by the results that we have discussed so far. Finally, we show
equivalent formulations of Conjecture~\ref{conj:nclosedkn}.

Let $\Gamma$ be a graph. Suppose $B$ is a subset of $V(\Gamma)$ such that
the complement graph of the induced subgraph $\Gamma_B$ is connected. 
Recall from~\cite{kim2008} that 
the {\em co-contraction $\cob(\Gamma,B)$ of $\Gamma$ relative to $B$} is defined as
\[
\cob(\Gamma,B) = \overline{ \overline{\Gamma} \big/ \overline{\Gamma_B}},\]
where $\overline{\Gamma}\big/\overline{\Gamma_B}$ denote the graph obtained from $\overline{\Gamma}$
by topologically contracting each edge in $\overline{\Gamma_B}$ onto a vertex
and removing loops or multi-edges thus obtained, successively. 
In~\cite{kim2008}, it is shown that $A(\cob(\Gamma,B))$ embeds
into $A(\Gamma)$. Using this, we first prove that $\NNN'$ is closed under co-contraction.

\begin{prop}\label{prop:cocontni}
Let $\Gamma$ be a graph that co-contracts onto $\Gamma'$. If $\Gamma\in\NNN'$, then $\Gamma'\in\NNN'$.
\end{prop}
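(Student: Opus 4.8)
The plan is to derive a contradiction from the embedding provided by co-contraction, supplemented by the promotion lemma. Suppose $\Gamma':=\cob(\Gamma,B)\notin\NNN'$, and fix a compact hyperbolic surface $S$ and a relative embedding $\psi\co\pi_1(S)\to A(\Gamma')$. By the embedding theorem of~\cite{kim2008} there is a monomorphism $\iota\co A(\Gamma')\hookrightarrow A(\Gamma)$, so $\phi:=\iota\circ\psi$ is an injection $\pi_1(S)\to A(\Gamma)$. Recall (Definition~\ref{defn:relative} and Proposition~\ref{prop:lrexists}) that an injection $\pi_1(S)\to A(\Gamma)$ is a relative embedding precisely when the image of every peripheral element is conjugate into $A(K)$ for some complete $K\le\Gamma$. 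Since each $\psi(d_i)$ is conjugate into $A(K_i')$ for a complete $K_i'\le\Gamma'$, it suffices to prove: for every complete subgraph $K'\le\Gamma'$, $\iota(A(K'))$ is conjugate in $A(\Gamma)$ into a complete-subgraph subgroup $A(K)$. That makes $\phi$ a relative embedding and contradicts $\Gamma\in\NNN'$.

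To prove the displayed claim I would reduce to an elementary co-contraction: a spanning tree of the connected graph $\overline{\Gamma_B}$ exhibits $\cob(\Gamma,B)$ as a finite iteration of co-contractions along two-element co-connected sets (non-edges), and the property just displayed is stable under composing embeddings. So assume $B=\{p,q\}$ with $\{p,q\}\notin E(\Gamma)$, and let $b^\ast$ be the merged vertex, whose link in $\Gamma'$ is $\link_\Gamma(p)\cap\link_\Gamma(q)$. If $b^\ast\notin V(K')$ then $K'$ is already a complete subgraph of $\Gamma$ and $\iota$ restricts to the inclusion of $A(K')$; nothing is needed. If $b^\ast\in V(K')$, put $L:=V(K')\setminus\{b^\ast\}\subseteq\link_\Gamma(p)\cap\link_\Gamma(q)$; then $\Gamma_{L\cup\{p\}}$ is complete, $A(\Gamma_L)$ centralizes $A(\Gamma_{\{p,q\}})$ in $A(\Gamma)$, and $\iota(A(K'))=A(\Gamma_L)\times\langle\iota(b^\ast)\rangle$ lies in the join subgroup $A(\join(\Gamma_L,\Gamma_{\{p,q\}}))$. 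Writing the relevant peripheral, up to conjugacy, as $\psi(d_i)=\ell_i\,(b^\ast)^{k_i}$ with $\ell_i\in A(\Gamma_{L_i})$, the subcase $k_i=0$ causes no trouble, since then $\phi(d_i)$ is conjugate into $A(\Gamma_{L_i})$ and $\Gamma_{L_i}$ is complete.

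I expect the subcase $k_i\neq0$ to be the main obstacle. There $\phi(d_i)$ is conjugate to $\ell_i\,\iota(b^\ast)^{k_i}$, which in general only lies in a conjugate of $A(\join(\Gamma_{L_i},\Gamma_{\{p,q\}}))$ — an induced subgraph of $\Gamma$ that is not complete — so $\phi$ need not be a relative embedding as it stands, and one must modify $S$. In the spirit of Section~\ref{sec:bisimp} one would take a normalized label-reading pair for $\psi$ and cut or reroute along the $b^\ast$-arcs meeting $\partial_iS$ to remove this boundary behaviour. A complementary route, which isolates the same difficulty, uses the promotion lemma (Lemma~\ref{lem:promotion}): $\Gamma\in\NNN'$ gives $\Gamma^\ast\in\NNN$, and $A(\cob(\Gamma^\ast,B))$ embeds in $A(\Gamma^\ast)$, so it would be enough to locate the closed hyperbolic surface group that $A((\Gamma')^\ast)$ contains (by Lemma~\ref{lem:promotion}, since $\Gamma'\notin\NNN'$) inside $A(\cob(\Gamma^\ast,B))$; this demands a careful combinatorial comparison of the simplicial extension $(\cob(\Gamma,B))^\ast$ with co-contractions of $\Gamma^\ast$, which is the real technical point.
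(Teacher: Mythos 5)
Your setup and reduction to an elementary co-contraction are sound, but the ``main obstacle'' you identify is not one, and you have already recorded the facts needed to dispatch it. Suppose $b^\ast\in V(K')$ and put $L=V(K')\setminus\{b^\ast\}$. You observe that $L\subseteq\link_\Gamma(p)\cap\link_\Gamma(q)$, that $\Gamma_{L\cup\{p\}}$ is complete, and that $A(\Gamma_L)$ centralizes $\langle p,q\rangle$. Combining these, and using that $q\ell q^{-1}=\ell$ for $\ell\in A(\Gamma_L)$,
\[
\iota\bigl(A(K')\bigr)=\langle L,\,q^{-1}pq\rangle=q^{-1}\langle L,\,p\rangle q= q^{-1}A(\Gamma_{L\cup\{p\}})\,q,
\]
which is a conjugate of a complete-subgraph subgroup. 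Concretely, with $\psi(d_i)$ conjugate to $\ell_i(b^\ast)^{k_i}$, the image $\phi(d_i)$ is conjugate to $\ell_i q^{-1}p^{k_i}q$, hence (conjugating by $q$) to $\ell_i p^{k_i}\in A(\Gamma_{L_i\cup\{p\}})$, with $\Gamma_{L_i\cup\{p\}}$ complete. No modification of $S$ is needed, and the promotion-lemma route is unnecessary. Once this is seen, your argument is a short, purely algebraic proof of the proposition.

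For comparison, the paper carries out the topological counterpart of precisely this conjugation inside the label-reading framework: it realizes $\psi$ by a normalized label-reading pair, replaces each $v$-labeled curve or arc by a parallel $b,a,b$ triple reading $b^{-1}ab$ (here $p=a$, $q=b$), and then surgers the $b$-arcs off each boundary component, using Normalization~I (Lemma~\ref{lem:norm1}) to guarantee the needed pairing of $b$-arcs with opposite transverse orientations. Pulling the $b$-arcs off $\partial_iS$ is the label-reading analogue of conjugating the peripheral word by $b$. Your route, completed as above, is shorter; the paper's stays within the geometric language used throughout.
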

\begin{proof}
Assume $\Gamma'\not\in\NNN'$. 
One can find a compact hyperbolic surface $S$ and a relative embedding
 $\phi:\pi_1(S)\rightarrow A(\Gamma')$ 
 induced by a normalized label-reading pair $(\HH',\lambda')$. 
 Let $\partial_1 S,\ldots,\partial_m S$ be the boundary components of $S$.
Using induction, we have only to consider the case when $\overline{\Gamma'}$ is obtained from $\overline{\Gamma}$
by contracting an edge $\{a,b\}\in E(\overline{\Gamma})$ onto a vertex $v\in V(\overline{\Gamma'})$.
From~\cite{kim2008}, the map $A(\Gamma')\rightarrow A(\Gamma)$ sending $v$ to $b^{-1}ab$, while sending
the other vertices onto themselves, is an embedding.
From the definition of a co-contraction, $\link(v) = \link(a)\cap\link(b)$.
If $\lambda'^{-1}(v)=\varnothing$, then $\phi:\pi_1(S)\rightarrow A(\Gamma'\setminus\{v\})=A(\Gamma\setminus\{a,b\})
\le A(\Gamma)$, and hence $\Gamma\not\in\NNN'$.
Now assume $\lambda'^{-1}(v)\ne\varnothing$ and choose any $\alpha\in\lambda'^{-1}(v)$. First, consider the case
when $\alpha$ is an arc.
Suppose
$\alpha$ intersects with $\partial_i S$.
By the definition of a relative embedding, any arc intersecting with $\partial_i S$ is labeled by a vertex in 
$\{v\}\cup \link(v) = \{v\}\cup (\link(a)\cap\link(b))$. 
Consider the strip $\eta_\alpha:I\times[-1,1]\rightarrow S$ containing $\alpha$. We replace $\alpha$ in $\HH'$ by 
three homotopic arcs $\alpha_1=\eta_\alpha(I\times{-1}), \alpha_2=\eta_\alpha(I\times{0})$ and 
$\alpha_3=\eta_\alpha(I\times{1})$
such that the following hold (Figure~\ref{fig:cocont}\;(a)).
\enumir
\be
\item
$\alpha_1$ and $\alpha_3$ are labeled by $b$.
\item
$\alpha_2$ is labeled by $a$.
\item
$\alpha_2$ and $\alpha_3$ have the transverse orientations induced by the homotopies  $\alpha_2\sim\alpha$
and $\alpha_3\sim\alpha$, 
while $\alpha_1$ has the opposite orientations.
\ee
\enumia
Apply this process for each $v$--arc,
and also similarly for each $v$--curve.
This results in a new label-reading pair, denoted by $(\HH,\lambda)$, 
with the underlying graph $\Gamma$. The induced label-reading map 
$\psi:\pi_1(S)\rightarrow A(\Gamma)$ 
is the composition of embeddings
$\pi_1(S)\hookrightarrow A(\Gamma')\hookrightarrow A(\Gamma)$.

In $(\HH,\lambda)$, suppose $\partial_i S$ intersects with a $b$--arc, for some $i$.
Any arcs intersecting with $\partial_i S$ are labeled by either $a$, $b$ or vertices in $\link(a)\cap\link(b)$.
One can pair $b$--arcs intersecting with $\partial_i S$, such that for each pair $\{\beta_1,\beta_2\}$:
\enumir
\be
\item
the transverse orientations of $\beta_1$ and $\beta_2$ are opposite to each other,
\item
one of the intervals on $\partial_i S\setminus(\beta_1\cup\beta_2)$ intersects only with
the arcs labeled by $\link(a)\cap\link(b)$.
\ee
Then one can remove intersections between $\partial_i S$ and $b$--arcs,
without altering the equivalence class of $(\HH,\lambda)$ as is illustrated in Figure~\ref{fig:cocont}\;(b).
By applying this process to any $\partial_i S$ intersecting with a $b$--arc, 
we obtain another label-reading pair $(\HH_1,\lambda_1)$
such that the labels of the arcs intersecting with each boundary component
induce a complete subgraph of $\Gamma$.
Hence, $\psi$ is a relative embedding, and $\Gamma\not\in\NNN'$.
 \end{proof}

\begin{figure}[htb!] 
\centering
\subfigure[]{
\labellist 
\small\hair 2pt 
\pinlabel {$\scriptstyle \partial_i S$} [lb] at 11 32
\pinlabel {$\scriptstyle (\HH',\lambda')$} [b] at 14 -4
\pinlabel {$\scriptstyle \partial_i S$} [lb] at 93 32
\pinlabel {$\scriptstyle (\HH,\lambda)$} [b] at 97 -4
\pinlabel {$\downarrow$} [t] at 39 41
\pinlabel {$\leadsto$} [l] at 61 25
\pinlabel {$\downarrow$} [t] at 123 43
\pinlabel {$\downarrow$} [t] at 123 28
\pinlabel {$\uparrow$} [t] at 120 58
\pinlabel {${}_\alpha$} [l] at 52 37
\pinlabel {${}_{\alpha_1}$} [l] at 134 51
\pinlabel {${}_{\alpha_2}$} [l] at  134 37
\pinlabel {${}_{\alpha_3}$} [l] at  134 24
\pinlabel {${}_v$} [t] at 40 28
\pinlabel {${}_b$} [l] at 121 57
\pinlabel {${}_a$} [l] at 123 44 
\pinlabel {${}_b$} [l] at 124 15
\endlabellist
\centering
\includegraphics{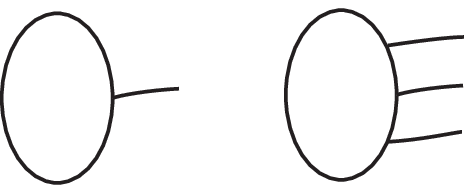}
}
\hfill
\subfigure[]{
\labellist 
\small\hair 2pt  
\pinlabel {$\scriptstyle\partial_i S$}  at 37 33
\pinlabel {$\scriptstyle (\HH,\lambda)$} [b] at 37 -4
\pinlabel {$\scriptstyle \partial_i S$}  at 146 37
\pinlabel {$\scriptstyle (\HH_1,\lambda_1)$} [b] at 146 -4
\pinlabel {$\downarrow$} [b] at 15 14
\pinlabel {$\uparrow$} [t] at 14 39
\pinlabel {$\uparrow$} [t] at 14 50
\pinlabel {${}_{\beta_1}$} [t] at -5 45
\pinlabel {${}_b$} [t] at 8 51
\pinlabel {${}_a$} [t] at 8 38
\pinlabel {${}_b$} [b] at 10 11
\pinlabel {$\leadsto$} [l] at 87 25
\pinlabel {$\leftarrow$} [l] at 23 60
\pinlabel {$\rightarrow$} [r] at 49 59
\pinlabel {${}_c$} [l] at 18 61
\pinlabel {${}_d$} [r] at 55 61
\pinlabel {$\uparrow$} [b] at 63 41
\pinlabel {$\downarrow$} [b] at 62 28
\pinlabel {$\downarrow$} [b] at 62 16
\pinlabel {${}_{\beta_2}$} [t] at 81 53
\pinlabel {${}_b$} [b] at 68 38
\pinlabel {${}_a$} [b] at 67 27
\pinlabel {${}_b$} [t] at 67 23
\pinlabel {$\downarrow$} [b] at 120 17
\pinlabel {$\uparrow$} [t] at 119 42
\pinlabel {$\uparrow$} [t] at 119 53
\pinlabel {${}_b$} [t] at 118 60
\pinlabel {${}_a$} [t] at 125 43
\pinlabel {${}_b$} [b] at 120 9
\pinlabel {$\leftarrow$} [l] at  131 65
\pinlabel {$\rightarrow$} [r] at 160 66
\pinlabel {${}_c$} [l] at 125 65
\pinlabel {${}_d$} [r] at 167 66
\pinlabel {$\downarrow$} [b] at 180 32
\pinlabel {${}_{\beta}$} [t] at 190 56
\pinlabel {${}_a$} [b] at 188 36
\endlabellist
\centering
\includegraphics{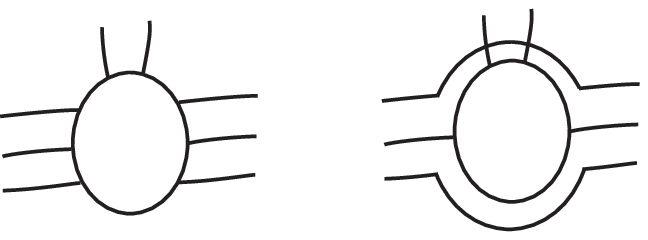}
}
\caption{Proof of Proposition~\ref{prop:cocontni}. In (b), $c$ and $d$ belong to $\link(a)\cap\link(b)$ in $A(\Gamma)$.
Hence, $c$--and $d$--arcs are allowed to intersect with the $b$--curve or $b$--arc $\beta$ 
in $(\HH_1,\lambda_1)$.}
\label{fig:cocont}
\end{figure}

Define $\FFF$ to be the smallest family of graphs satisfying the following conditions.
\enumir
\be
\item $K_n\in\FFF$.
\item $\Gamma_1,\Gamma_2\in\FFF$, then $\join(\Gamma_1,\Gamma_2)\in\FFF$.
\item If $\Gamma_1,\Gamma_2\in\FFF$, and $\Gamma$ is a complete graph amalgamation of $\Gamma_1$
and $\Gamma_2$, then $\Gamma\in\FFF$.
\item Suppose $e$ is a bisimplicial edge of a graph $\Gamma$. 
If $\Gamma\setminus\mathring{e}\in\FFF$, then $\Gamma\in\FFF$.
\item Let $\Gamma\in\FFF$ and $B\subseteq V(\Gamma)$  such that $\overline{\Gamma_B}$ is connected.
Then $\cob(\Gamma,B)\in\FFF$.
\end{enumerate}
\enumia

By the Dirac's result in~\cite{dirac1961} which was used in the proof of Corollary~\ref{cor:chordal},
 (i) and (iii) imply that chordal graphs are in $\FFF$. 
 The result of Golumbic and Goss~\cite{GG1978} quoted in the proof of Corollary~\ref{cor:cb}, along with
(iv), implies that any chordal bipartite graphs are in $\FFF$. 

\begin{cor}\label{cor:fff}
$\NNN'$ contains $\FFF$.
\end{cor}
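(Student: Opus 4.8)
The plan is to verify that $\FFF\subseteq\NNN'$ by structural induction on the construction of $\FFF$, using the closure properties of $\NNN'$ that were established earlier in the paper. Since $\FFF$ is defined as the \emph{smallest} family satisfying conditions (i)--(v), it suffices to show that $\NNN'$ itself satisfies all five of these conditions; then $\FFF\subseteq\NNN'$ follows immediately from minimality.

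\begin{proof}[Proof of Corollary~\ref{cor:fff}]
We show that $\NNN'$ satisfies each of the five defining conditions of $\FFF$; since $\FFF$ is the smallest such family, this yields $\FFF\subseteq\NNN'$.
\begin{enumerate}
\item
$K_n\in\NNN'$, since $A(K_n)\cong\Z^n$ contains neither a closed hyperbolic surface group nor a non-abelian free group (and hence admits no relative embedding of a compact hyperbolic surface group); this was noted after Proposition~\ref{prop:join}.
\item
If $\Gamma_1,\Gamma_2\in\NNN'$, then $\join(\Gamma_1,\Gamma_2)\in\NNN'$ by Proposition~\ref{prop:join}.
\item
If $\Gamma_1,\Gamma_2\in\NNN'$ and $\Gamma$ is a complete graph amalgamation of $\Gamma_1$ and $\Gamma_2$, then $\Gamma\in\NNN'$ by Theorem~\ref{thm:completeamalgam}.
\item
If $e$ is a bisimplicial edge of $\Gamma$ and $\Gamma\setminus\mathring{e}\in\NNN'$, then $\Gamma\in\NNN'$ by Theorem~\ref{thm:bisimplicial}.
\item
If $\Gamma\in\NNN'$ and $B\subseteq V(\Gamma)$ is such that $\overline{\Gamma_B}$ is connected, then $\cob(\Gamma,B)\in\NNN'$ by Proposition~\ref{prop:cocontni}.
\end{enumerate}
Thus $\NNN'$ satisfies conditions (i)--(v), so $\FFF\subseteq\NNN'$.
\end{proof}

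There is essentially no obstacle here: the corollary is a bookkeeping consequence of the main results of Sections~3--5, and the only point requiring care is the logical direction---one invokes the minimality of $\FFF$ rather than attempting an induction on a generating process, which makes the argument a one-line deduction once the five closure properties are lined up. The mild subtlety worth stating explicitly is in item (1), where one should recall that being in $\NNN'$ is about the nonexistence of \emph{relative} embeddings of \emph{compact} hyperbolic surface groups, and that $\Z^n$ contains no non-abelian free group, hence no such subgroup at all.
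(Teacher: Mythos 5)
Your proof is correct and follows the same approach as the paper: verify that $\NNN'$ satisfies the five defining closure conditions (via Proposition~\ref{prop:join}, Proposition~\ref{prop:cocontni}, Theorem~\ref{thm:completeamalgam}, and Theorem~\ref{thm:bisimplicial}), then invoke minimality of $\FFF$. You are slightly more explicit than the paper in spelling out the base case $K_n\in\NNN'$, but this is the same argument.
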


\begin{proof}
Proposition~\ref{prop:join}, Proposition~\ref{prop:cocontni}, 
Theorem~\ref{thm:completeamalgam} and Theorem~\ref{thm:bisimplicial}
imply that $\NNN'$ is closed under taking a join, taking a co-contraction, amalgamating along a complete subgraph
and adding a bisimplicial edge,
respectively. Since $\FFF$ is the smallest of such a graph class, $\FFF\subseteq\NNN'$.
 \end{proof}

So, $\FFF$ provides a lower bound for $\NNN'$. 
As Corollary~\ref{cor:fff} summarizes techniques introduced in this paper,
it seems likely that determining whether $\FFF=\NNN'$ will require new insights.

Crisp, Sageev and Sapir proposed several {\em reduction moves} on underlying graphs of label-reading maps,
which they successfully used to classify all the graphs in $\NNN$ with at most 8 vertices~\cite{CSS2008}.
More precisely, they described eight {\em forbidden graphs}, and proved that 
a graph with at most 8 vertices is in $\NNN$ if and only if the graph does not contain
any forbidden graph as an induced subgraph.
Their beautiful arguments, especially of finding candidates for kernel elements of 
label-reading maps, also excluded many graphs 
with 9 or more vertices from $\NNN$. However, 
the question of classifying all the graphs on which right-angled Artin groups
 contain closed hyperbolic surface groups currently seems wide-open.
Here, we provide two new examples of graphs that are not in $\NNN$.

\begin{exmp}\label{exmp:nine}
Crisp, Sageev and Sapir proved that the right-angled Artin group 
on the graph $P_1(8)$ contains a closed hyperbolic surface group~\cite{CSS2008}.
The complement graph of $P_1(8)$ is drawn in Figure~\ref{fig:exception}\;(a).
Now consider the graphs $\Gamma_1$ and $\Gamma_2$, 
whose complements are drawn in Figure~\ref{fig:exception}\;(b) and (c), respectively.
If we topologically contract the edge $\{a,b\}$ in the complement of $\Gamma_1$,
and remove multi-edges thus obtained, then we have the complement  graph of $P_1(8)$.
This means $\Gamma_1$ co-contracts onto $P_1(8)$, and hence,
we have an embedding $A(P_1(8))\hookrightarrow A(\Gamma_1)$~\cite{kim2008}.
Similarly, $\overline{\Gamma_2}$ contracts onto $\overline{\Gamma_1}$ 
by contracting the edge $\{c,d\}$ onto $a$,
and so, $A(\Gamma_1)$ embeds into $A(\Gamma_2)$.
This implies that 
$A(\Gamma_1)$ and $A(\Gamma_2)$ contain closed hyperbolic surface groups,
since so does $A(P_1(8))$.
One can easily check that $\Gamma_1$ and $\Gamma_2$
do not contain any {\em forbidden subgraphs} considered  in~\cite{CSS2008}.
This gives new examples of graphs not in $\NNN$, hence not in $\NNN'$.
\end{exmp}

\begin{figure}[htb!]
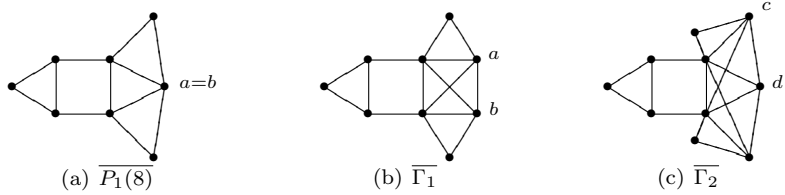
 
\centering
\subfigure[$\overline{P_1(8)}$]{
$$
\xy
0;/r.17pc/:
(-8,5)*{}="a";
(0,0)*{}="b"; 
(0,10)*{}="c"; 
(10,0)*{}="d"; 
(10,10)*{}="e"; 
(20,5)*{}="f"; 
(26,6)*{\scriptstyle a=b};
(18,-8)*{}="g";
(18,18)*{}="h";
"a"*{{}_\bullet};
"b"*{{}_\bullet};
"c"*{{}_\bullet};
"d"*{{}_\bullet};
"e"*{{}_\bullet};
"f"*{{}_\bullet};
"g"*{{}_\bullet};
"h"*{{}_\bullet};
"a";"b"**\dir{-};
"a";"c"**\dir{-};
"c";"b"**\dir{-};
"d";"b"**\dir{-};
"e";"c"**\dir{-};
"e";"d"**\dir{-};
"f";"d"**\dir{-};
"e";"f"**\dir{-};
"g";"f"**\dir{-};
"h";"f"**\dir{-};
"g";"d"**\dir{-};
"e";"h"**\dir{-};
\endxy
$$
}
\hspace{.3in}
\subfigure[$\overline{\Gamma_1}$]{
$$
\xy
0;/r.17pc/:
(-8,5)*{}="a";
(0,0)*{}="b"; 
(0,10)*{}="c"; 
(10,0)*{}="d"; 
(10,10)*{}="e"; 
(20,0)*{}="f"; 
(20,10)*{}="g";
(15,-8)*{}="h";
(15,18)*{}="i";
(23,1)*{\scriptstyle b};
(23,11)*{\scriptstyle a};
"a"*{{}_\bullet};
"b"*{{}_\bullet};
"c"*{{}_\bullet};
"d"*{{}_\bullet};
"e"*{{}_\bullet};
"f"*{{}_\bullet};
"g"*{{}_\bullet};
"h"*{{}_\bullet};
"i"*{{}_\bullet};
"a";"b"**\dir{-};
"a";"c"**\dir{-};
"c";"b"**\dir{-};
"d";"b"**\dir{-};
"e";"c"**\dir{-};
"e";"d"**\dir{-};
"f";"d"**\dir{-};
"e";"f"**\dir{-};
"g";"f"**\dir{-};
"h";"f"**\dir{-};
"g";"d"**\dir{-};
"h";"d"**\dir{-};
"e";"g"**\dir{-};
"i";"g"**\dir{-};
"e";"i"**\dir{-};
\endxy
$$
}
\hspace{0.3in}
\subfigure[$\overline{\Gamma_2}$]{
$$
\xy
0;/r.17pc/:
(-8,5)*{}="a";
(0,0)*{}="b"; 
(0,10)*{}="c"; 
(10,0)*{}="d"; 
(10,10)*{}="e"; 
(20,5)*{}="f"; 
(23,6)*{\scriptstyle d};
(18,-8)*{}="g";
(18,18)*{}="h";
(21,20)*{\scriptstyle c};
(8,-5)*{}="i";
(8,15)*{}="j";
"a"*{{}_\bullet};
"b"*{{}_\bullet};
"c"*{{}_\bullet};
"d"*{{}_\bullet};
"e"*{{}_\bullet};
"f"*{{}_\bullet};
"g"*{{}_\bullet};
"h"*{{}_\bullet};
"i"*{{}_\bullet};
"j"*{{}_\bullet};
"a";"b"**\dir{-};
"a";"c"**\dir{-};
"c";"b"**\dir{-};
"d";"b"**\dir{-};
"e";"c"**\dir{-};
"e";"d"**\dir{-};
"f";"d"**\dir{-};
"e";"f"**\dir{-};
"g";"f"**\dir{-};
"h";"f"**\dir{-};
"g";"d"**\dir{-};
"e";"h"**\dir{-};
"i";"g"**\dir{-};
"i";"d"**\dir{-};
"e";"j"**\dir{-};
"j";"h"**\dir{-};
"h";"d"**\dir{-};
"e";"g"**\dir{-};
\endxy
$$
}
\caption{The complement graphs of  $P_1(8),\Gamma_1$ and $\Gamma_2$.}
\label{fig:exception}
\end{figure}

One of the key obstructions for the question of classifying graphs in $\NNN$ is
Conjecture~\ref{conj:nclosedkn}. 
Note that $\NNN$ is closed under disjoint union and amalgamating along a vertex~\cite{kim2007,CSS2008}. 
We conclude this article by listing
equivalent formulations to Conjecture~\ref{conj:nclosedkn}.

\enumir
\begin{prop}\label{prop:nniequiv}
The following are equivalent.
\be
\item
$\NNN$ is closed under complete graph amalgamation.
\item
If $\Gamma'$ is obtained from $\Gamma$ by removing a simplicial vertex,
and $\Gamma'\in\NNN$, then $\Gamma\in\NNN$.
\item
$\NNN' = \NNN$.
\ee
\end{prop}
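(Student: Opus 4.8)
The plan is to prove the cyclic chain of implications $(1)\Rightarrow(2)\Rightarrow(3)\Rightarrow(1)$. The implication $(3)\Rightarrow(1)$ is immediate: by Theorem~\ref{thm:completeamalgam}, $\NNN'$ is closed under complete graph amalgamation, so if $\NNN=\NNN'$ then $\NNN$ enjoys the same closure, which is exactly $(1)$.

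For $(1)\Rightarrow(2)$, suppose $v$ is a simplicial vertex of $\Gamma$ and that $\Gamma':=\Gamma\setminus\{v\}\in\NNN$. Since $\link(v)$ induces a complete subgraph, the induced subgraph $K$ of $\Gamma$ on $\{v\}\cup\link(v)$ is complete, and one checks directly that $\Gamma=\Gamma'\cup K$ with $\Gamma'\cap K$ equal to the complete graph induced on $\link(v)$; hence $\Gamma$ is a complete graph amalgamation of $\Gamma'$ and $K$. As $A(K)\cong\Z^{|V(K)|}$ contains no closed hyperbolic surface group, $K\in\NNN$, so $(1)$ gives $\Gamma\in\NNN$.

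The substance of the proposition is $(2)\Rightarrow(3)$. Since $\NNN'\subseteq\NNN$ holds vacuously, it suffices to show $\NNN\subseteq\NNN'$, i.e.\ that $\Gamma\notin\NNN'$ forces $\Gamma\notin\NNN$. Given $\Gamma\notin\NNN'$, Lemma~\ref{lem:promotion} provides $\Gamma^*\notin\NNN$, where $\Gamma^*$ is the simplicial extension, obtained from $\Gamma$ by adjoining the vertices $v_{K,u}$ with $K\in\KK(\Gamma)$ and $u\in V(K)$. These adjoined vertices are pairwise non-adjacent, and the link of each of them is the vertex set of a complete subgraph lying entirely in $V(\Gamma)$. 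Enumerating them as $v_1,\dots,v_k$, I would delete them one at a time: each $v_j$ is simplicial in $\Gamma^*\setminus\{v_1,\dots,v_{j-1}\}$ because its link is unchanged (the already-deleted vertices lie outside $V(\Gamma)$, hence outside its link), so the contrapositive of $(2)$, applied $k$ times in succession, keeps us outside $\NNN$ at every stage. After the last deletion we reach $\Gamma$, so $\Gamma\notin\NNN$; this yields $\NNN\subseteq\NNN'$ and therefore $(3)$.

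The only delicate point, and the one I would be most careful to spell out, is the verification that each $v_{K,u}$ remains simplicial under successive deletion of the others; this is precisely where the independence of the adjoined vertices and the fact that each of their links sits inside $V(\Gamma)$ are used. With Lemma~\ref{lem:promotion} and Theorem~\ref{thm:completeamalgam} in hand, everything else is routine bookkeeping.
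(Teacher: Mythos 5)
Your proof is correct and follows the same route as the paper's: the same cyclic chain $(1)\Rightarrow(2)\Rightarrow(3)\Rightarrow(1)$, with $(1)\Rightarrow(2)$ via the observation that adding a simplicial vertex is a complete graph amalgamation, $(2)\Rightarrow(3)$ via Lemma~\ref{lem:promotion} and iterated deletion of the adjoined simplicial vertices of $\Gamma^*$, and $(3)\Rightarrow(1)$ from Theorem~\ref{thm:completeamalgam}. The only difference is cosmetic: you spell out the check that each $v_{K,u}$ stays simplicial after deleting the others (because their links lie in $V(\Gamma)$ and are thus disjoint from the independent set being removed), a point the paper leaves implicit.
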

\enumia

\begin{proof}
(i)$\Rightarrow$(ii) is  Obvious, since adding a simplicial vertex to $\Gamma'$ 
is same as 
amalgamating $\Gamma'$ 
with a complete graph $K_n$ along $K_{n-1}$ for some $n$.

For (ii)$\Rightarrow$(iii), first note that $\NNN'\subseteq\NNN$ by definition. 
To prove $\NNN\subseteq\NNN'$, suppose $\Gamma\not\in\NNN'$.
There exists a relative embedding
$\phi:\pi_1(S)\rightarrow A(\Gamma)$, for some compact hyperbolic surface $S$.
By Lemma~\ref{lem:promotion}, $A(\Gamma^*)$ contains a closed hyperbolic
surface group. Note that
 $\Gamma^*$ is obtained from $\Gamma$ by adding 
independent simplicial vertices to $\Gamma$. Assuming (ii), $\Gamma\not\in\NNN$.

(iii)$\Rightarrow$ (i) is an immediate from Theorem~\ref{thm:completeamalgam}. \end{proof}

\end{document}